\definecolor{dmagenta}{rgb}{.4,.1,.5}
\definecolor{dblue}{rgb}{.0,.0,.6}
\definecolor{ddblue}{rgb}{.0,.0,.4}
\definecolor{dred}{rgb}{.5,.0,.0}
\definecolor{dgreen}{rgb}{.0,.4,.0}
\definecolor{Eeom}{rgb}{.0,.0,.5}
\numberwithin{equation}{section}
\newtheorem{theorem}{Theorem}[section]
\newtheorem{lemma}{Lemma}[section]
\theoremstyle{definition}
\newtheorem{example}{Example}[section]
\newtheorem{condition}{Condition}[section]
\theoremstyle{remark}
\newcommand{\df}{:=}
\DeclareMathOperator{\Exp}{\mathbb{E}}
\DeclareMathOperator{\Prob}{\mathbb{P}}
\newcommand{\D}{\mathrm{d}}   
\newcommand{\E}{\mathrm{e}}   
\newcommand{\RR}{\mathbb{R}}  
\newcommand{\Rd}{\mathbb{R}^d}
\newcommand{\NN}{\mathbb{N}}   
\newcommand{\Ind}{\mathds{1}}        
\newcommand{\Act}{\mathbb{U}}        
\newcommand{\pAct}{\tilde{\mathbb{U}}}
\newcommand{\Uadm}{\mathfrak{U}}     
\newcommand{\Usm}{\mathfrak{U}^{\mathrm{SM}}}  
\newcommand{\sJmin}{\sJ_{\mathrm{inf}}}
\newcommand{\sJmax}{\sJ_{\mathrm{sup}}}
\newcommand{\tu}{\tilde{u}}
\newcommand{\lamstr}{\lambda^{*}}
\newcommand{\lammax}{\lambda_{\mathrm{max}}}
\newcommand{\Sob}{\mathscr{W}} 
\newcommand{\Sobl}{\mathscr{W}_{\mathrm{loc}}}  
\newcommand{\transp}{^{\mathsf{T}}}  
\newcommand{\Lg}{\mathscr{L}}                
\newcommand{\uuptau}{\Breve{\uptau}}
\newcommand{\grad}{\nabla}
\newcommand{\sB}{\mathscr{B}}     
\newcommand{\Cc}{\mathcal{C}}     
\newcommand{\sC}{\mathscr{C}}
\newcommand{\sE}{\mathscr{E}}    
\newcommand{\sJ}{\mathscr{J}}
\newcommand{\sK}{\mathscr{K}}     
\newcommand{\Lp}{L}   
\newcommand{\Lyap}{\mathcal{V}}   
\newcommand{\abs}[1]{\lvert#1\rvert}
\newcommand{\norm}[1]{\lVert#1\rVert}
\newlength{\dhatheight}
\begin{document}

\title
{Zero-sum stochastic differential game with risk-sensitive cost}

\author{Anup Biswas}
\address{Department of Mathematics,
Indian Institute of Science Education and Research,
Dr. Homi Bhabha Road, Pune 411008, India.}
\email{anup@iiserpune.ac.in}
\author{Subhamay Saha}
\address{Department of Mathematics,
Indian Institute of Technology, Guwahati 781039, India.}
\email{saha.subhamay@iitg.ernet.in}

\date{\today}

\begin{abstract}
Zero-sum games with risk-sensitive cost criterion are considered with underlying dynamics being given by controlled stochastic
differential equations. Under the assumption of geometric stability on the dynamics, we completely
characterize all possible saddle point strategies in the class of stationary Markov controls. In addition, we also establish
existence-uniqueness result for the value function of the Hamilton-Jacobi-Isaacs equation.

\end{abstract}

\subjclass[2000]{Primary: 91A15; secondary: 91A23, 49N70}
\keywords{stochastic differential games, risk-sensitive payoff, Hamilton-Jacobi-Isaacs equations, saddle point strategy,
verification result.}

\maketitle

\section{\bf Introduction}
This article concerns with zero-sum game for risk-sensitive costs where the underlying dynamics of the players are
given by non-degenerate controlled stochastic differential equations. There are two players, one of them
tries to minimize his/her payoff and the other one tries to maximize it.
The maximizing player may be thought
of as {\it nature}
who may choose a parametrization of the model to be totally adverse to the other controller, in a {\it non-anticipative} way.
 The admissible controls (or actions) of each 
player are assumed to take  values in a space of probability measures over a compact metric space. In other words,
each player chooses his/her control from a given collection of mixed strategies. 
As the system evolves, the costs
are accumulated and the performance of a control policy is measured by a long-run average of risk-sensitive cost 
criterion. The main objective of this work is to find solution for the associated Hamilton-Jacobi-Isaacs (HJI) equation
characterizing the value of the game and rendering saddle point strategies. The key results of this article can be
roughly described as follows.
\begin{itemize}
\item {\it Existence and Uniqueness of solution.}\, The HJI equation has an eigenpair $(V, \lambda)\in\Cc^2(\Rd)\times\RR$
where $\lambda$ is the value of the game. Moreover, this solution is unique.

\item{\it Verification result.}\, Any measurable mini-max  selector of the HJI equation form a saddle point strategy, and any
saddle point strategy in the class of stationary Markov controls can be obtained as a measurable mini-max selector of
the HJI equation.
\end{itemize}
Risk-sensitive (or {\it exponential-of-integral}) control has been an active area of research in the field of control theory. 
These problems were introduced by Jacobson \cite{Jacobson} in seventies. We also refer to Whittle \cite{Whittle}
and references therein for an early account of the risk-sensitive control literature. Most of these problems have 
concentrated on minimizing the risk-senstive payoff.  Among the most relevant to the present are \cite{ari-anup, ABS, 
biswas-11a, biswas-11, Biswas-10, Fleming-95, Nagai-96}. On the other hand the number of literature on
stochastic differential games with risk-sensitive cost criterion are very few \cite{Basar, Basu-Ghosh, El-Ham, Ghosh-16}.
Ba\c{s}ar \cite{Basar} considers non-zero sum game with finite horizon risk-sensitive costs and establishes existence of Nash equilibrium
whereas Ghosh et.al. \cite{Ghosh-16} obtain existence of Nash equilibrium with  long-run average of risk-sensitive payoff. 
Both zero-sum and 
nonzero-sum games with finite horizon risk-sensitive cost criterion have been 
considered by El-Karoui and Hamad\`{e}ne \cite{El-Ham} where the authors have
used backward stochastic differential equation to
prove existence of a saddle-point and an equilibrium point for respectively, the zero-sum and nonzero-sum games.
In a recent work Basu and Ghosh \cite{Basu-Ghosh} consider a zero-sum game with long--run average of risk-sensitive cost
and adopt the standard vanishing discount approximation
to establish existence of saddle points which are mini-max selector of the associated HJI equation.
This work is done with bounded drift, diffusion coefficient and cost functions. After a careful reading we have found that the proofs
of \cite{Basu-Ghosh}(see Theorem~3 and ~4 there) contain some crucial error that do not seem to have an easy fix. 
In \cite{Basu-Ghosh} the authors have constructed an eigenpair for the HJI equation and have made an attempt to show
 that this eigenvalue is the value of the game. But this can not be true as a recent work of Berestycki and Rossi
 \cite{Berestycki-15} suggests that
 there are uncountably many eigenvalues (for the uncontrolled linear operator) with positive eigenfunctions. Therefore, one has to look for the {\it principal} eigenvalue of HJI equation. In particular, one has to show that the eigenvalue obtained by the vanishing discount method is actually the principal eigenvalue
 for the HJI equation.
 We show in this article that under certain conditions, the value of the game can be seen as the principal eigenvalue
of the HJI equation. This problem is closely related to the charatecterization of the principal eigenvalue (or \textit{optimal value}) for the Hamilton-Jacobi-Bellman(HJB) type equations
which have been studied by several authors recently \cite{ABS, KS06, Ich13}. Let us also mention \cite{BM16, C14, KN}
where eigenvalue type problems for the HJB equation
related to the the ergodic control problems in $\Rd$ are studied.

In this article we consider a large class (compared to \cite{Basu-Ghosh}) of control problems. We prove not only
the existence of a saddle point strategy, but also characterize all possible saddle point strategies
in the class of stationary Markov controls. In addition, we also obtain the uniqueness of the value function for the HJI equations.
To find the {\it principal} eigenpair of the HJI equation we start with the Dirichlet eigenvalue value problems in
bounded domains where existence of principal
eigenpair is known and then increase these domains to $\Rd$. This idea was used earlier by Biswas in \cite{biswas-11a}
but for a simpler setting.
The approach of \cite{Basu-Ghosh, Ghosh-16}
is based on parabolic PDE where one introduces an additional {\it risk-sensitive parameter} $\theta$ multiplied with the running
cost and studies the associated parabolic PDE, viewing $\theta$ as a time parameter. It seems this approach is not very much helpful in studying our game problem whereas
 the eigenvalue approach of \cite{biswas-11a} (see also \cite{C14})
 seems more natural in current situation. Another advantage in using the latter approach is the ease 
 in obtaining
 stochastic representation of the value functions which helps us to apply strong maximum principle.
 It should be kept in mind that \cite{biswas-11a} deals with a minimization problem and it does not discuss characterization 
of the optimal stationary Markov controls. It turns out that the game problem is much more
involved than the minimization problem. One of the reasons for this difficulty
 is that our game is a generalization to both minimization and maximization problems with risk-sensitive cost, and it
 is not easy to convert a maximization problem to a minimization problem of the same type. Therefore, we
 separately study the maximization problem with risk-sensitive costs in Section~\ref{S-max}  (see Theorem~\ref{T3.1}) and characterize
 the optimal stationary Markov controls. 
 
 We borrow several results from \cite{ABS} where an interesting characterization 
 of the strict monotonicity property of the principal eigenvalues of second order elliptic operators has been studied. Using
 this characterization the authors then study a minimization problem in \cite{ABS} for the  risk-sensitive cost. We also use the
 stochastic representation formula  of the principal eigenfunction from \cite{ABS}.
 The problem we consider in this article does not follow in a straightforward manner from \cite{ABS}
  and the level of technical difficulties is also more involved.
 For instance, in case of the minimization problem it is seen in \cite{ABS, biswas-11a} that any limit of the principal eigenvalues of the HJB operator considered in
 bounded domains, as the domains increase to $\Rd$, is suboptimal, and therefore, by choosing a selector of the HJB one proves that the
 limiting eigenvalue is actually the principal one and hence, it has to be optimal. But this simple policy does not seem to work for the
 maximization problem. So to study the maximization problem we first perturb the cost to a \text{norm-like} running cost and show that
 the maximization problem can be solved for this perturbed cost. Then using some stability estimate we could show that it is possible to pass to
 the limit and solve the original maximization problem (see Lemma~\ref{L3.4} and ~\ref{L3.5} below).

Rest of the article is organized as follows. In the remaining part of this section we summarize the notations. Section 2 introduces the problem setup and the main result. In section 3, we study a maximization problem with risk-sensitive criterion, and finally section 4 deals with the proof of the main Theorem.

\subsection*{Notation.}
The standard Euclidean norm in $\RR^{d}$ is denoted by $\abs{\,\cdot\,}$.
The set of nonnegative real numbers is denoted by $\RR_{+}$,
$\NN$ stands for the set of natural numbers, and $\Ind$ denotes
the indicator function.
Given two real numbers $a$ and $b$, the minimum (maximum) is denoted by $a\wedge b$ 
($a\vee b$), respectively.
The closure, boundary, and the complement
of a set $A\subset\Rd$ are denoted
by $\Bar{A}$, $\partial{A}$, and $A^{c}$, respectively.
We denote by $\uptau(A)$ the \emph{first exit time} of the process
$\{X_{t}\}$ from the set $A\subset\RR^{d}$, defined by
\begin{equation*}
\uptau(A) \;\df\; \inf\;\{t>0\;\colon\, X_{t}\not\in A\}\,.
\end{equation*}
The open ball of radius $r$ in $\RR^{d}$, centered at the origin,
is denoted by $B_{r}$, and we let $\uptau_{r}\df \uptau(B_{r})$,
and $\uuptau_{r}\df \uptau(B^{c}_{r})$.

The term \emph{domain} in $\RR^{d}$
refers to a nonempty, connected open subset of the Euclidean space $\RR^{d}$. 
For a domain $D\subset\RR^{d}$,
the space $\Cc^{k}(D)$ ($\Cc^{\infty}(D)$)
refers to the class of all real-valued functions on $D$ whose partial
derivatives up to order $k$ (of any order) exist and are continuous,
and $\Cc_{b}(D)$ denotes the set of all bounded continuous
real-valued functions on $D$.
In addition $\Cc_c(D)$ denotes the class of functions in $\Cc(D)$ that
have compact support, and $\Cc_0(\Rd)$ the class of continuous
functions on $\Rd$ that vanish at infinity. 
By a slight abuse of notation,
whenever the whole space $\RR^d$ is concerned, we write
$f\in\Cc^{k}(\RR^{d})$ 
whenever $f\in\Cc^{k}(D)$ for all bounded domains $D\subset\RR^{d}$.
The space $\Lp^{p}(D)$, $p\in[1,\infty)$, stands for the Banach space
of (equivalence classes of) measurable functions $f$ satisfying
$\int_{D} \abs{f(x)}^{p}\,\D{x}<\infty$, and $\Lp^{\infty}(D)$ is the
Banach space of functions that are essentially bounded in $D$. We shall use the notation
$\norm{\cdot}_\infty$ to denote the $L^\infty$ norm on the underlying domain.
The standard Sobolev space of functions on $D$ whose generalized
derivatives up to order $k$ are in $\Lp^{p}(D)$, equipped with its natural
norm, is denoted by $\Sob^{k,p}(D)$, $k\ge0$, $p\ge1$.

In general, if $\mathcal{X}$ is a space of real-valued functions on $D$,
$\mathcal{X}_{\mathrm{loc}}$ consists of all functions $f$ such that
$f\varphi\in\mathcal{X}$ for every $\varphi\in\Cc_{c}^{\infty}(D)$,
the space of smooth functions on $D$ with compact support.
In this manner we obtain for example the space $\Sobl^{2,p}(D)$.


We adopt the notation
$\partial_{i}\df\tfrac{\partial~}{\partial{x}_{i}}$ and
$\partial_{ij}\df\tfrac{\partial^{2}~}{\partial{x}_{i}\partial{x}_{j}}$
for $i,j\in\NN$.
We often use the standard summation rule that
repeated subscripts and superscripts are summed from $1$ through $d$.
For example,
\begin{equation*}
\tfrac{1}{2}\, a^{ij}\partial_{ij}\varphi
+ b^{i} \partial_{i}\varphi \;\df\; \tfrac{1}{2}\, \sum_{i,j=1}^{d}a^{ij}
\frac{\partial^{2}\varphi}{\partial{x}_{i}\partial{x}_{j}}
+\sum_{i=1}^{d} b^{i} \frac{\partial\varphi}{\partial{x}_{i}}\,.
\end{equation*}

\section{\bf Settings and main results}
The following assumptions (A1)-(A3) on the controlled diffusion \eqref{E2.1}
will be in effect throughout this article unless
otherwise mentioned.
\begin{itemize}
\item[(A1)]
\emph{Local Lipschitz continuity:\/}
The function
\begin{equation*}
b\;=\;\bigl[b^{1},\dotsc,b^{d}\bigr]\transp\,\colon\,\RR^{d}\times\Act_1\times\Act_2\to\RR^{d},\quad \upsigma\;=\;\bigl[\upsigma^{ij}\bigr]\,\colon\,\RR^{d}\to\RR^{d\times d}
\end{equation*}
are locally Lipschitz in $x$ (uniformly in the other two variables for $b$) with a Lipschitz constant $C_{R}>0$
depending on $R>0$.
In other words, we have
\begin{equation*}
\abs{b(x, u)-b(y, u)} + \norm{\upsigma(x) - \upsigma(y)}
\;\le\;C_{R}\,\abs{x-y}\qquad\forall\,x,y\in B_R\,, \; u\in\Act_1\times\Act_2\, .
\end{equation*}
We also assume that the action spaces $\Act_i, i=1,2,$ are compact metric spaces and 
$b$ is jointly continuous in $(x, u)$.
\smallskip
\item[(A2)]
\emph{Affine growth condition:\/}
$b$ and $\upsigma$ satisfy a global growth condition of the form
\begin{equation*}
\sup_{u\in\Act_1\times\Act_2}\, \langle b(x, u),x\rangle^{+} + \norm{\upsigma(x)}^{2}\;\le\;C_0
\bigl(1 + \abs{x}^{2}\bigr) \qquad \forall\, x\in\RR^{d},
\end{equation*}
for some constant $C_0>0$,
where $\norm{\upsigma}^{2}\;\df\;
\mathrm{trace}\left(\upsigma\upsigma\transp\right)$.
\smallskip
\item[(A3)]
\emph{Nondegeneracy:\/}
For each $R>0$, it holds that
\begin{equation*}
\sum_{i,j=1}^{d} a^{ij}(x)\xi_{i}\xi_{j}
\;\ge\;C^{-1}_{R} \abs{\xi}^{2} \qquad\forall\, x\in B_{R}\,,
\end{equation*}
and for all $\xi=(\xi_{1},\dotsc,\xi_{d})\transp\in\RR^{d}$,
where $a\df \upsigma \upsigma\transp$.
\end{itemize}
Set of all probability measures on $\Act_i$ is denoted by $\pAct_i$, $i=1,2$. 
In our control model we have
two players and $\pAct_i$ denotes the {\it relaxed} action space for the the $i$-th player, $i=1,2$. 
We extend the drift $b:\Rd\times\pAct_1\times\pAct_2\to\Rd$ as follows: 
for $\nu_i\in\pAct_i,\, i=1,2,$
$$b(x, \nu_1, \nu_2)=\int_{\Act_1\times\Act_2} b(x, u_1, u_2)\, \nu_1(\D{u_1})\, \nu_2(\D{u_2}).$$
It is easy to verify that the extended drift satisfies (A1)-(A2) with $\Act_i$ replaced by $\pAct_i, \, i=1,2$.
One major advantage of this extension is that $b(x, \cdot, \cdot):\pAct_1\times\pAct_2\to\Rd$ becomes 
coordinate-wise  convex. This extension would play a key role in the selection of saddle point below.

The controlled stochastic differential equation (SDE) is given by
\begin{equation}\label{E2.1}
\D{X_s} \;=\; b(X_s, U^1_s, U^2_s)\, \D{s} + \upsigma(X_s)\, \D{W_s},
\end{equation}
where $W$ is a standard $d$-dimensional Wiener process on some complete, filtered probability space $(\Omega, \mathfrak{F}, \{\mathfrak{F}_t\}, \Prob)$, and $U^i$ is an $\pAct_i$ valued process satisfying following admissibility 
condition: for $s<t$, the completion of $\sigma\{U^i_r, W_r\; :\; r\leq s, i=1,2\}$ relative to $\{\mathfrak{F}, \Prob\}$ is independent of $W_t-W_s$. Let us clarify that we do not fix any probability space a priori and this is an important
technical point which allow as to consider a fairly large class of {\it admissible} control.
It is well known that given a complete, filtered probability space $(\Omega, \mathfrak{F}, \{\mathfrak{F}_t\}, \Prob)$
with a Wiener process $W$, 
under (A1)--(A3), for any progressively measurable $(U^1, U^2)$ 
there exists a unique solution of \eqref{E2.1}
\cite[Theorem~2.2.4]{book}.
We define the family of operators $\Lg^{\tilde{u}}:\Cc^{2}(\RR^{d})\mapsto\Cc(\RR^{d})$,
where $\tilde{u}\in\pAct_1\times\pAct_2$ plays the role of a parameter, by
\begin{equation}\label{E-Lg}
\Lg^{\tilde{u}} f(x) \;=\; \tfrac{1}{2}\, a^{ij}(x)\,\partial_{ij} f(x)
+ b^{i}(x,\tu_1, \tu_2)\, \partial_{i} f(x)\,,\quad \tu=(\tu_1, \tu_2)\in\pAct_1\times\pAct_2\,.
\end{equation}
By a stationary Markov control for the $i$-th player we mean a control of the form $U^i_t=v_i(X_t)$ for a Borel measurable map $v_i:\Rd\to\pAct_i, \, i=1,2$. By an abuse of notation we will refer to $v_i$ as stationary Markov control.
Let $\Usm_{i}$ denote the set of all stationary Markov controls for the $i$-th player.
It is well known that under $v_i\in\Usm_i$
\eqref{E2.1} has a unique strong solution \cite{Gyongy-96}.
Moreover, under $v=(v_1, v_2)\in\Usm_1\times\Usm_2$, the process $X$ is strong Markov,
and we denote its transition kernel by $P^{t}_{v}(x,\cdot\,)$.
It also follows from the work in \cite{Bogachev-01} that under
$v\in\Usm_1\times\Usm_2$, the transition probabilities of $X$
have densities which are locally H\"older continuous.
Thus $\Lg_{v}$ defined by
\begin{equation*}
\Lg_{v} f(x) \;=\; \tfrac{1}{2}\, a^{ij}(x)\,\partial_{ij} f(x)
+ b^{i} \bigl(x,v_1(x), v_2(x)\bigr)\, \partial_{i} f(x)\,,\quad v=(v_1, v_2)\in\Usm_1\times\Usm_2\,,
\end{equation*}
for $f\in\Cc^{2}(\RR^{d})$,
is the generator of a strongly-continuous
semigroup on $\Cc_{b}(\RR^{d})$, which is strong Feller.
When $v\in\Usm_1\times\Usm_2$ we use $v$ as subscript in $\Lg_v$ to distinguish
it from $\Lg^{\tilde{u}}$, $\tilde{u}\in\pAct_1\times\pAct_2$, defined in the preceding paragraph.
We let $\Prob_{x}^{v}$ denote the probability measure and
$\Exp_{x}^{v}$ the expectation operator on the canonical space of the
process under the control $v\in\Usm_1\times\Usm_2$, conditioned on the
process $X$ starting from $x\in\RR^{d}$ at $t=0$. For every $i$ the set $\Usm_i$ is metrizable with a compact metric
\cite[Section~2.2.4]{book}, \cite{Borkar-89}. In fact, $v_n\to v$ in $\Usm_i$ if and only if
$$\int_{\Rd} f(x) \int_{\Act_i} g(x, u_i) v_n(\D{u_i}|x)\, \D{x}\xrightarrow[]{n\to\infty}\int_{\Rd} f(x) \int_{\Act_i} g(x, u_i) v(\D{u_i}|x)\, \D{x},$$
for all $f\in L^1(\Rd)\cap L^2(\Rd)$ and $g\in\Cc_b(\Rd\times\Act_i)$.
Recall that $\uptau(D)$ denotes the first exit time of the process $X$ from domain $D$. A pair stationary Markov controls $(v_1, v_2)\in\Usm_1\times\Usm_2$
is said to be stable if the associated process is positive recurrent i.e.,
$\Exp^v_x[\uptau(D^c)] < \infty$ for all $x \in D^c$. It is known that for a non-degenerate diffusion the property of  positive recurrence is independent of domain, i.e., if it holds for one domain $D$ then it also holds for every domain \cite[Theorem~2.6.10]{book}.

Let us now introduce the set of all admissible controls for our game problem.
We follow the approach of \cite{Bor-Ghosh, Mannucci}. A control
is called a \textit{feedback control} if it is progressively measurable with respect to the natural filtration generated by $X$.
More precisely, we say $U^i$ is of feedback form if for some measurable $f_i: [0, \infty)\times\Cc([0, \infty):\Rd)\to\pAct_i$
we have $U^i_t=f_i(t, X_{[0, t]}), i=1,2$. It should be noted that stationary Markov controls are also feedback controls.
The set of all feedback controls for the $i$-th player is denoted by $\Uadm_i, i=1,2$. We also refer the members of $\Uadm_i$
as admissible controls. By \cite[Theorem~2.2.11]{book} we know that for any $(U^1, U^2)\in\Uadm_1\times\Uadm_2$ \eqref{E2.1} has a unique weak solution. 

A Borel measurable function $\ell:\Rd\to\RR$ is said to be inf-compact if 
for any $\kappa\in\RR$ the set $\{x\in\Rd\; \colon \ell(x)\leq\kappa\}$ is compact. It is clear that for any inf-compact function $\ell$
 we have $\lim_{\abs{x}\to\infty} \ell(x)=\infty$.  For 
a locally bounded function $f:\Rd\to\RR$ and $(U^1, U^2)\in\Uadm_1\times\Uadm_2$, the risk-sensitive average is defined to be
$$\sE_x(f, U^1, U^2)=\limsup_{T\to\infty}\, \frac{1}{T}\, \log \Exp_x\left[\E^{\int_0^T f(X_s, U^1_s, U^2_s)\, \D{s}}\right], \; \text{and}\; \sE(f, U^1, U^2)=\inf_{x\in\Rd} \sE_x(f, U^1, U^2).$$
The running cost function for our problem is given by
a continuous function $c:\Rd\times\Act_1\times\Act_2\to\RR_+$ which is locally Lipschitz continuous in its first argument uniformly with respect to $u\in\Act_1\times\Act_2$. Without loss of generality we assume that $c$ is non-constant function.
As earlier we extend $c$ over $\Rd\times\pAct_1\times\pAct_2$ as follows: for $\nu_i\in\pAct_i, \, i=1,2$
$$c(x, \nu_1, \nu_2)=\int_{\Act_1\times\Act_2} c(x, u_1, u_2)\, \nu_1(\D{u_1})\, \nu_2(\D{u_2}).$$
In this article the cost criterion is given by $\sE_x(c, U^1, U^2)$, and for notational economy we shall drop the notation $c$ and denote it by $\sE_x(U^1, U^2)$. We also define
\begin{equation}\label{E2.3}
\sJmin(x, U^2)=\inf_{U^1\in\Uadm_1}\sE_x(U^1, U^2), \quad \sJmax(x, U^1)=\sup_{U^2\in\Uadm_2}\sE_x(U^1, U^2).
\end{equation}
Therefore the upper and lower value of the game are respectively defined as
\begin{align*}
\bar{\Lambda} &= \inf_{U^1\in\Uadm_1}\sJmax(x, U^1)= \inf_{U^1\in\Uadm_1}\sup_{U^2\in\Uadm_2}\sE_x(U^1, U^2),
\\
\underline{\Lambda} &=\sup_{U^2\in\Uadm_2}\sJmin(x, U^2)= \sup_{U^2\in\Uadm_2}\inf_{U^1\in\Uadm_1}\sE_x(U^1, U^2).
\end{align*}
The game is said to have a value if we have
$$\bar\Lambda=\underline{\Lambda}=\Lambda\; \text{(say)}.$$
The reader might have observed that we do not write $\Lambda$ (or $\bar\Lambda$, $\underline{\Lambda}$) as a function of $x$.
We show that under certain stability hypothesis (assumed below), the value of the game is independent of $x$.
Let us now introduce two set of stability assumptions (Conditions~\ref{C2.1} and ~\ref{C2.2}) that will be used in this article. These conditions
are generally referred to as the geometric stability conditions and have been heavily used in the study of
 discrete and continuous time Markov control problems.
In the context of risk-sensitive controls similar conditions have been used by \cite{ABS, Basu-Ghosh, biswas-11, Fleming-95}.

\begin{condition}\label{C2.1}
There exists a positive function $\Lyap\in\Cc^2(\Rd), \, \inf_{\Rd}\Lyap>0,$ and a constant $\gamma>0$ such that
\begin{equation}\label{lyapunov}
\max_{u\in\Act_1\times\Act_2} \Lg^u \Lyap \;\leq \; \beta\, \Ind_{\sK} - \gamma \Lyap,
\end{equation}
for some compact $\sK\subset\Rd$ and constant $\beta$. Moreover, the cost function $c$ is bounded, and
\begin{equation}\label{E2.5}
 \norm{c}_\infty\;<\;{\gamma}.
\end{equation}
\end{condition}

\begin{example}\label{Eg1}
Suppose $a=\mathrm{Id}$ and 
$$b(x, u)\cdot x\;\le\; -\abs{x}, \quad \text{outside a compact set}\, \sK_1\,.$$
Taking $\Lyap(x)=\exp(\abs{x})$ for $\abs{x}\ge 1$, we have 
$$\Lg\Lyap\;=\; \Big(\frac{d-1}{2\abs{x}}+\frac{1}{2} +b(x, u)\cdot \frac{x}{\abs{x}}\Big)\Lyap\;\le\; \Big(\frac{d-1}{2\abs{x}}-\frac{1}{2}\Big)\Lyap, \quad \text{for}\; \abs{x}\ge 1\,.$$
\end{example}

Note that $\Lyap$ in Example~\ref{Eg1} is inf-compact. Below we produce an example where $\Lyap$ is not inf-compact.
\begin{example}
Let $d=1$, $\upsigma=\mathrm{Id}$ and $b(x, u)=-\mathrm{sgn}(x) \abs{x}^2 + u$ where $u\in[0, 1]$. Take $\Lyap(x) =\frac{\abs{x}}{1+\abs{x}}$
for $\abs{x}\geq 1$ and extend it in whole of $\RR$ as a smooth function so that $\inf_{\Rd}\Lyap>0$. Then for any $\gamma\in(0,1)$ we can find
a compact set $\sK$ and a constant $\beta$ such that
$$\sup_{u\in[0, 1]}\, \Lg^u\Lyap\; \leq \; \beta\, \Ind_{\sK} - \gamma\, \Lyap.$$
\end{example}

\begin{condition}\label{C2.2}
There exists positive functions $\Lyap\in \Cc^2(\Rd),\, \inf_{\Rd}\Lyap>0,$ and $\ell\in\Cc(\Rd), \ell$ inf-compact, such that
\begin{equation}\label{lyapunov1}
\sup_{u\in\Act_1\times\Act_2}(\Lg^u \Lyap) \;\le\; \beta \Ind_{\sK} - \ell\, \Lyap,
\end{equation}
for some constant $\beta$ and a compact set $\sK$. Moreover, the cost function $c$ belongs to $\sC_\ell$ where
$$\sC_\ell=\Bigl\{c:\Rd\times\Act_1\times\Act_2\to\RR_+\;\text{where}\; \limsup_{\abs{x}\to\infty}\tfrac
{\max_{u\in\Act_1\times\Act_2} c(x, u)}{\ell(x)}\;\le\; \theta\,\, \mbox{for some}\,\, \theta \in(0, 1) \Bigr\}.$$
\end{condition}
$\ell$ being inf-compact it is easy to see that for $c\in\sC_\ell$, $\bigl(\ell-\max_{u\in\Act_1\times\Act_2} c(x, u)\bigr)$ is inf-compact. Let us remark that if $a, b$ are bounded functions one can not expect \eqref{lyapunov1} to hold.
See for instance the Remark~3.4 in \cite{ABS}.

\begin{example}
Let $\upsigma$ be bounded and $b:\Rd\times\Act\to\Rd$ be such that 
$$\langle b(x, u)-b(0, u), x\rangle\; \;\le\; \;-\kappa\, \abs{x}^\alpha, \quad \text{for some}\; \alpha\in(1,2], \quad (x, u)\in\Rd\times\Act\,.$$
Then, $\Lyap(x)=\exp(\abs{x}^{\theta\alpha})$ for $\abs{x}\ge 1$,  satisfies \eqref{lyapunov1}
for sufficiently small $\theta>0$, and $\ell\sim\abs{x}^{2\alpha-1}$. Note that $\alpha=2, \, \upsigma=\mathrm{Id}$ is considered in \cite{Fleming-95} in the context of minimization problem with risk-sensitive criterion.
\end{example}

Let us now state the main result of this article

\begin{theorem}\label{T2.1}
Suppose that either Condition~\ref{C2.1} or Condition~\ref{C2.2} holds.
Then there exists an eigenpair $(V, \Lambda)\in\Cc^2(\Rd)\times\RR, \, V>0,$ that satisfies 
\begin{equation}\label{ET2.1A}
\max_{\tu_2\in\pAct_2}\min_{\tu_1\in\pAct_1}\Big(\frac{1}{2}a^{ij}\partial_{ij} V + b(x, \tu_1, \tu_2)\cdot\grad V
+ c(x, \tu_1, \tu_2)V \Big)\;=\;\Lambda\, V, \quad V(0)=1.
\end{equation}
Moreover, we have the following
\begin{enumerate}
\item[(i)] $\Lambda$ in \eqref{ET2.1A} is the value of game i.e., $\bar\Lambda=\underline{\Lambda}=\Lambda$.
\item[(ii)] If $v^*_2$ is an outer maximizing selector of 
\begin{equation}\label{ET2.1B}
\max_{\tu_2\in\pAct_2}\min_{\tu_1\in\pAct_1}\Big(\frac{1}{2}a^{ij}\partial_{ij} V + b(x, \tu_1, \tu_2)\cdot\grad V
+ c(x, \tu_1, \tu_2)V \Big)=\Lambda\, V,
\end{equation}
and $v^*_1$ is an outer minimizing selector of
\begin{equation}\label{ET2.1C}
\min_{\tu_1\in\pAct_1}\max_{\tu_2\in\pAct_2}\Big(\frac{1}{2}a^{ij}\partial_{ij} V + b(x, \tu_1, \tu_2)\cdot\grad V
+ c(x, \tu_1, \tu_2)V \Big)=\Lambda\, V,
\end{equation}
then the pair $(v^*_1, v^*_2)\in\Usm_1\times\Usm_2$ is a saddle point strategy i.e., 
$$\sE(v^*_1, U^2)\leq  \sE(v^*_1, v^*_2)=\Lambda \leq \sE(U^1, v^*_2),\quad \forall \, U^1\in\Uadm_1, \; \text{and}\;\; U^2\in\Uadm_2.$$
\item[(iii)] The eigenfunction $V$ is unique in the class $\Cc^2(\Rd)$, provided $V(0)=1$. 
\item[(iv)] If $(\hat{v}_1, \hat{v}_2)\in\Usm_1\times\Usm_2$ is a saddle point strategy in the above sense then $\hat{v}_1$ is an outer minimizing 
selector of \eqref{ET2.1C} and $\hat{v}_2$ is an outer maximizing selector of \eqref{ET2.1B}.
\end{enumerate}
\end{theorem}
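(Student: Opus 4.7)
The plan is to carry out the argument in four stages, following the eigenvalue/Dirichlet-approximation strategy of \cite{biswas-11a} and \cite{ABS}, with Sion's minimax supplying the saddle-point structure.

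\emph{Stage 1: Construction of the eigenpair.} For each ball $B_n$ I would solve the Dirichlet principal eigenvalue problem for the HJI operator,
\begin{equation*}
\max_{\tu_2 \in \pAct_2}\min_{\tu_1 \in \pAct_1}\bigl(\Lg^{\tu_1,\tu_2} V_n + c(x,\tu_1,\tu_2)\, V_n\bigr) \;=\; \Lambda_n V_n \quad \text{in } B_n, \qquad V_n = 0 \text{ on } \partial B_n,
\end{equation*}
with $V_n > 0$ in $B_n$ and $V_n(0)=1$. Existence of the principal Dirichlet eigenpair for the HJI operator on bounded domains is available via Krein--Rutman/Berestycki--Nirenberg--Varadhan arguments; the crucial point is that, thanks to the affine extension of $b$ and $c$ to $\pAct_1 \times \pAct_2$, Sion's minimax gives $\max_{\tu_2}\min_{\tu_1} = \min_{\tu_1}\max_{\tu_2}$ pointwise and ensures existence of measurable outer selectors. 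Interior $\Sobl^{2,p}$ estimates together with the Harnack inequality yield local uniform bounds on $V_n$ and $\Lambda_n$; extracting a subsequence produces $V_n \to V$ in $\Cc^2_{\mathrm{loc}}(\Rd)$ and $\Lambda_n \to \Lambda$, furnishing the eigenpair in \eqref{ET2.1A}.

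\emph{Stage 2: Identification of $\Lambda$ as the game value (the main obstacle).} As the authors stress, the HJI operator admits uncountably many positive eigenfunctions (\cite{Berestycki-15}), so the key difficulty is showing that the $\Lambda$ produced by Dirichlet approximation is \emph{the} principal one in the sense that it coincides with $\bar\Lambda = \underline\Lambda$. I would pick measurable outer selectors $v_1^*$ of \eqref{ET2.1C} and $v_2^*$ of \eqref{ET2.1B}. Under Condition~\ref{C2.1}, the strict inequality $\|c\|_\infty < \gamma$ combined with the Lyapunov inequality \eqref{lyapunov} gives the exponential integrability needed to control Feynman--Kac expectations along arbitrary admissible controls. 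Under Condition~\ref{C2.2}, one first perturbs $c$ to a cost of the form $c + \varepsilon \ell$, solves the maximization problem via Theorem~\ref{T3.1}, and then uses the stability estimates of Lemmas~\ref{L3.4}--\ref{L3.5} to send $\varepsilon \to 0$. In both regimes, applying Itô's formula to $\log V(X_t)$ along the SDE driven by $(v_1^*, U^2)$ and using the pointwise inequality
\begin{equation*}
\Lg^{v_1^*, U^2_t} V + c(x, v_1^*(x), U^2_t)\, V \;\le\; \Lambda V,
\end{equation*}
which holds because $v_1^*$ is an outer min--max selector, after exit-time truncation, division by $T$, and $T \to \infty$ (with Fatou and the Lyapunov control of return times) yields $\sE_x(v_1^*, U^2) \le \Lambda$ for every $U^2 \in \Uadm_2$. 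The symmetric argument with $v_2^*$ gives $\sE_x(U^1, v_2^*) \ge \Lambda$ for every $U^1 \in \Uadm_1$. Combining with the trivial $\underline\Lambda \le \bar\Lambda$ yields $\bar\Lambda = \underline\Lambda = \Lambda$ and simultaneously proves (i) and the saddle inequalities of (ii).

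\emph{Stage 3: Uniqueness of $V$.} Assume $(V', \Lambda')$ is another eigenpair with $V' \in \Cc^2(\Rd)$, $V' > 0$, $V'(0)=1$. Running Stage 2 with $V'$ in place of $V$, the value argument forces $\Lambda' = \Lambda$. For $V = V'$, I would use the stochastic representation of principal eigenfunctions from \cite{ABS}: plugging a common measurable min-max selector into the two HJI equations reduces the comparison to a linear non-divergence second-order equation satisfied by both $V$ and $V'$; the strong maximum principle applied to $V/V'$ (which attains its infimum by the Lyapunov bound on $V, V'$) then yields $V \equiv V'$ given the common normalization at the origin.

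\emph{Stage 4: Converse characterization (iv).} For $(\hat v_1, \hat v_2) \in \Usm_1 \times \Usm_2$ a saddle point we have $\sE_x(\hat v_1, \hat v_2) = \Lambda$ and the two inequalities with arbitrary opponents. Taking $U^2 = v_2^*$ (the outer maximizer from Stage 2) in the Itô/Feynman--Kac computation under $\hat v_1$ shows that the inequality $\Lg^{\hat v_1, v_2^*} V + c(x, \hat v_1, v_2^*)V \le \Lambda V$ must be an equality a.e.\ (otherwise, using strict positivity of $V$ and the strong maximum principle, a positive measure set of strict inequality would propagate to a strict drop in $\sE$). This equality identifies $\hat v_1$ as an outer minimizing selector of \eqref{ET2.1C}; the symmetric argument with $v_1^*$ identifies $\hat v_2$ as an outer maximizing selector of \eqref{ET2.1B}. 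The main subtlety to handle carefully in this last stage is ruling out strict inequality on a null-avoiding set, which is where the $\Cc^2$ regularity of $V$ and the nondegeneracy (A3) enter decisively.
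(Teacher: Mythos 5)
Your Stage 1 (Dirichlet eigenpair approximation on $B_n$, Harnack plus $\Sobl^{2,p}$ estimates, and extraction of a subsequential limit) matches the paper's Lemmas~\ref{L4.1} and~\ref{L4.2} essentially verbatim, and your Stages 3--4 are close in spirit to Lemmas~\ref{L4.6} and~\ref{L4.7}. The problem lies squarely in Stage 2.

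Your direct It\^o/Feynman--Kac argument for $\sE_x(v_1^*,U^2)\le\Lambda$ does not go through as stated: from the supersolution property $\Lg^{v_1^*,U^2}V+cV\le\Lambda V$ one obtains
$V(x)\ge\Exp_x\bigl[\E^{\int_0^{T\wedge\uptau_n}(c-\Lambda)\,\D s}\,V(X_{T\wedge\uptau_n})\bigr]$,
and to extract an upper bound on $\Exp_x\bigl[\E^{\int_0^{T}(c-\Lambda)\,\D s}\bigr]$ you need $\inf_{\Rd}V>0$. Nothing in the construction gives this; the limiting $V$ is built from Dirichlet eigenfunctions that vanish on $\partial B_n$, and neither Condition~\ref{C2.1} nor~\ref{C2.2} forces $c-\Lambda\ge 0$ outside a compact set, so the stochastic representation of Lemma~\ref{L4.4} does not yield a uniform lower bound either. (This is precisely why the paper perturbs $r$ to $r_m$ in Section~\ref{S-max}: the perturbed solution $V_m$ \emph{is} bounded below, by Lemma~\ref{L3.4}, but this property does not survive the limit $m\to\infty$.) The paper avoids the issue entirely in Lemma~\ref{L4.3}: it fixes $v_1^*$, appeals to Theorem~\ref{T3.1} to obtain an optimal $w_2^*$ with $\lammax=\sE_x(c,v_1^*,w_2^*)$, observes that $V$ is a positive supersolution of $\Lg_{(v_1^*,w_2^*)}+c$ at level $\Lambda$, and concludes $\lammax\le\Lambda$ from the \emph{definition} of principal eigenvalue \eqref{P-eigen} combined with Lemma~\ref{L2.1}. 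No lower bound on $V$ is needed.

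The ``symmetric argument'' claim for $\sE_x(U^1,v_2^*)\ge\Lambda$ is an even more serious gap. The subsolution inequality gives $V(x)\le\Exp_x[\E^{\int_0^T(c-\Lambda)\,\D s}V(X_T)]$, and now the obstruction is the weight $V(X_T)$: under the paper's stochastic representation one only has $V\le\kappa_2\Lyap^{\hat\theta}$, and $\Lyap$ is unbounded in the canonical examples (e.g.\ Example~\ref{Eg1}), so the factor cannot be discarded; attempting to peel it off via H\"older changes the effective cost to $c/(1-\hat\theta)$ and ruins the conclusion. More fundamentally, as the authors emphasize at length, minimization and maximization of risk-sensitive criteria are \emph{not} symmetric problems, which is why Section~\ref{S-max} exists at all. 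The paper's Lemma~\ref{L4.5} proves this inequality by contradiction: if some stationary Markov $v$ gave $\sE(v,v_2^*)<\Lambda$, one compares the associated eigenfunction $\widehat V$ with $V$ using their stochastic representations and the strong maximum principle \cite[Theorem~9.6]{GilTru} to force $V=\widehat V$, contradicting $\lambda_v<\Lambda$; the extension to general $U^1\in\Uadm_1$ then comes from solving the corresponding minimization problem. Your proposal needs to replace the symmetric It\^o step with an argument of this PDE-comparison type, or else establish the missing boundedness properties of $V$, which the paper deliberately does not assume.
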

For existence of measurable selector we refer the readers to \cite[Chapter~18]{Ali-Bor}.  Such existence 
of measurable selector will be used in several places in this article. Also note that by \cite[Theorem~3]{Fan}, the left hand sides of \eqref{ET2.1B} and \eqref{ET2.1C} are equal.
The proof of Theorem~\ref{T2.1} is established in Section~\ref{S-proofs} below.

Before we conclude this section let us recall the definition of principal eigenvalue from \cite{Berestycki-15}. Let
$$L\varphi = \tfrac{1}{2}\, \hat{a}^{ij}(x)\,\partial_{ij} \varphi (x)
+ \hat{b}^{i}(x)\, \partial_{i} \varphi (x)\, + \hat{c}(x)\varphi,$$
where $\hat{b}, \hat{c}$ are locally finite, Borel measurable functions, $\hat{a}$ is continuous and satisfies 
(A3). Then the principal eigenvalue of $L$ is defined to be
\begin{equation}\label{P-eigen}
\lamstr=\inf \{\lambda\; \colon\; \exists\ \varphi\in\Sobl^{2, d}(\Rd), \, \varphi>0,\;
\text{and}\; L\varphi-\lambda\varphi\leq 0,\; \text{a.e. in}\; \Rd\}.
\end{equation}
The following result is proved in \cite[Theorem~3.2 and 3.3]{ABS}.
\begin{lemma}\label{L2.1}
Suppose that either Conditon~\ref{C2.1} or Condition~\ref{C2.2} holds. Then for any $v=(v_1, v_2)\in\Usm_1\times\Usm_2$,
the principal eigenvalue of the operator 
$$\Lg_{v}\varphi=\tfrac{1}{2}\, a^{ij}(x)\,\partial_{ij} \varphi (x)
+ b^{i}(x, v(x))\, \partial_{i} \varphi (x)\, + c(x, v(x))\varphi,$$
is $\sE_x(v_1, v_2)$ for all $x\in\Rd$.
\end{lemma}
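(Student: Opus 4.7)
The plan is to realize the principal eigenvalue $\lamstr(v)$ of $\Lg_v + c(\cdot, v(\cdot))$ as the limit of Dirichlet principal eigenvalues on expanding balls, produce a corresponding positive eigenfunction $\varphi_v^{*}$ on $\Rd$, and then identify $\lamstr(v)$ with $\sE_x(v_1, v_2)$ through a Feynman--Kac representation whose validity is underwritten by the Lyapunov bounds in Conditions~\ref{C2.1}/\ref{C2.2}.

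First, fix $v=(v_1,v_2)\in\Usm_1\times\Usm_2$ and write $\hat b(x) = b(x, v(x))$, $\hat c(x) = c(x, v(x))$; these are locally bounded and measurable, and by (A3) $a$ is uniformly elliptic on bounded sets. For each $n\in\NN$ the Dirichlet principal eigenvalue problem $\Lg_v\varphi_n + \hat c\varphi_n = \lambda_n\varphi_n$ in $B_n$ with $\varphi_n=0$ on $\partial B_n$ and $\varphi_n(0)=1$ admits a unique positive $(\varphi_n,\lambda_n)$ with $\varphi_n\in\Sob^{2,p}(B_n)\cap\Cc(\bar B_n)$ by Krein--Rutman applied to the resolvent. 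Domain monotonicity gives $\lambda_n\uparrow\bar\lambda$, and the Lyapunov estimate $\Lg_v\Lyap + \hat c\Lyap \le M\Lyap$ (valid for some constant $M$ under~\ref{C2.1} by \eqref{E2.5}, and under~\ref{C2.2} since $\hat c - \ell$ is bounded from above on the support of the Lyapunov bound) yields $\lambda_n \le M$ by comparison against $\Lyap$. Interior $\Sob^{2,p}$ estimates and Harnack then let one pass to the limit along a subsequence: $\varphi_n \to \varphi_v^{*} > 0$ in $\Cc^{1,\alpha}_{\mathrm{loc}}$ and weakly in $\Sobl^{2,p}(\Rd)$, so $(\varphi_v^{*},\bar\lambda)$ satisfies $\Lg_v\varphi_v^{*} + \hat c\varphi_v^{*} = \bar\lambda\varphi_v^{*}$ on $\Rd$ with $\varphi_v^{*}(0) = 1$. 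That $\bar\lambda$ coincides with the variational $\lamstr(v)$ from \eqref{P-eigen} is a standard consequence of the definition combined with simplicity of the Dirichlet principal eigenvalue (cf.\ \cite{Berestycki-15, ABS}).

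Next, apply It\^o's formula to $Y_t := \varphi_v^{*}(X_t)\exp\bigl(\int_0^t (\hat c(X_s) - \lamstr(v))\,\D s\bigr)$ on $[0, T\wedge\uptau_n]$. Since $(\Lg_v + \hat c - \lamstr(v))\varphi_v^{*} = 0$ the drift term vanishes and $Y_{\cdot\wedge\uptau_n}$ is a local martingale under $\Prob_x^v$. Comparison of $\varphi_n$ against $\Lyap$ on each $B_n$ and passage to the limit give the pointwise bound $\varphi_v^{*}\le C\Lyap$, while Dynkin's formula together with \eqref{lyapunov} (resp.\ \eqref{lyapunov1}) yields $\Exp_x^v[\Lyap(X_{t\wedge\uptau_n})]\le \Lyap(x) + \beta t$. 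Combined with dominated convergence and optional sampling, this produces the Feynman--Kac identity
\begin{equation*}
\varphi_v^{*}(x) \;=\; \Exp_x^v\Bigl[\exp\Bigl(\int_0^T (\hat c(X_s) - \lamstr(v))\,\D s\Bigr)\,\varphi_v^{*}(X_T)\Bigr], \qquad T\ge 0.
\end{equation*}

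Finally, equate $\lamstr(v)$ with $\sE_x(v_1, v_2)$ using two-sided control on $\varphi_v^{*}$: Harnack delivers $\varphi_v^{*}\ge m_R>0$ on any ball $B_R$, and $\varphi_v^{*}\le C\Lyap$ constrains its growth. The upper bound $\sE_x(v_1,v_2)\le\lamstr(v)$ follows by splitting the expectation in the identity over $\{X_T\in B_R\}$ and its complement and using the Lyapunov estimate to tame the tail. The reverse bound $\sE_x(v_1,v_2)\ge \lamstr(v)$ comes from restricting $\Exp_x^v[\E^{\int_0^T\hat c\, \D s}]$ to the event $\{X_T\in B_R\}$ where $\varphi_v^{*}$ is bounded, invoking positive recurrence so that $\Prob_x^v(X_T\in B_R)$ is not exponentially small. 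The main obstacle is the tail control in the upper-bound step under~\ref{C2.2}, where both $\hat c$ and $\varphi_v^{*}$ are unbounded; the inf-compactness of $\ell-\hat c$ (built into $\sC_\ell$) is what allows an exponential moment bound on $\Lyap(X_T)$ derived from \eqref{lyapunov1} in integrated form, closing the estimate. This is precisely the argument carried out in \cite[Theorems~3.2--3.3]{ABS}, which the statement cites.
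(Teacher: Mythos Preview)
The paper does not supply its own argument here; it simply cites \cite[Theorems~3.2--3.3]{ABS}. Your outline---Dirichlet approximation on balls, Harnack/Sobolev compactness, and a Feynman--Kac link to $\sE_x$---is the right skeleton and is indeed what \cite{ABS} does, but the way you wire the last two steps together contains a gap. The identity $\varphi_v^{*}(x)=\Exp_x^v\bigl[\exp\bigl(\int_0^T(\hat c-\lamstr)\,\D s\bigr)\varphi_v^{*}(X_T)\bigr]$ at \emph{fixed} $T$ is not delivered by the justification you give: an $L^1$ bound $\Exp_x^v[\Lyap(X_{t\wedge\uptau_n})]\le \Lyap(x)+\beta t$ does not supply a dominating random variable for $Y_{T\wedge\uptau_n}$, so ``dominated convergence'' does not apply. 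What is actually needed is uniform integrability, and that hinges on the \emph{sharper} growth bound $\varphi_v^{*}\le \kappa\Lyap^{\hat\theta}$ with $\hat\theta\in(0,1)$ (see the derivation in \eqref{EL3.3FA}), not $\varphi_v^{*}\le C\Lyap$; under Condition~\ref{C2.2}, where $\hat c$ is unbounded, the exponential factor itself must also be absorbed into the Lyapunov bound, which your argument does not do.

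More substantively, the route in \cite{ABS} (mirrored throughout Section~\ref{S-max}, e.g.\ Lemma~\ref{L3.3}) does not pass through the fixed-$T$ identity at all. One instead proves the \emph{hitting-time} representation $\varphi_v^{*}(x)=\Exp_x^v\bigl[\E^{\int_0^{\uuptau_1}(\hat c-\lamstr)\,\D s}\varphi_v^{*}(X_{\uuptau_1})\bigr]$ for $x\in\sB_1^c$, which is easier to justify since $\varphi_v^{*}(X_{\uuptau_1})$ is bounded, and which by \cite[Corollary~2.3]{ABS} uniquely identifies $\lamstr$ as the principal eigenvalue. The equality $\lamstr=\sE_x(v_1,v_2)$ is then obtained via a separate argument (ergodicity of the associated twisted process), not by your splitting over $\{X_T\in B_R\}$. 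Your splitting argument for the upper bound $\sE_x\le\lamstr$ is in fact incomplete: bounding $\varphi_v^{*}$ below on $B_R$ only controls $\Exp_x^v\bigl[\E^{\int_0^T\hat c}\Ind_{\{X_T\in B_R\}}\bigr]$, and you do not explain why the contribution on $\{X_T\notin B_R\}$ grows no faster than $\E^{\lamstr T}$---positive recurrence gives only subexponential tightness of $X_T$, which is not enough when $\hat c$ can exceed $\lamstr$.
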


\section{\bf A maximization problem}\label{S-max}
In this section we study a maximization problem which will play a key role in our analysis. Let $r:\Rd\times\Act_2\to\RR_+$ and $b:\Rd\times\Act_2\to\RR^d$ be 
 Borel measurable functions that are locally finite and continuous in $u_2$ for every fixed $x$. Moreover, $b$ and $\upsigma$ satisfies (A2)-(A3).
We also assume that $\upsigma$ satisfies (A1). As earlier we consider
the relaxed action space $\tilde{\Act}_2$  and extend $b, r$ over $\tilde{\Act}_2$. Then the main result of this section
is the following
\begin{theorem}\label{T3.1}
Suppose that one of the following holds.
\begin{enumerate}
\item[(H1)] \eqref{lyapunov} holds and $\norm{r}_\infty<\;\gamma$.
\item[(H2)] \eqref{lyapunov1} holds with some continuous, inf-compact $\ell$ and 
\begin{equation}\label{cost}
\limsup_{\abs{x}\to\infty}\, \tfrac{\max_{u_2\in\Act_2} r(x, u_2)}{\ell(x)}\;\leq\; \theta\in(0, 1)\,.
\end{equation}
\end{enumerate}
Then there exists $(V, \lammax)\in\Sobl^{2, p}(\Rd)\times\RR, \, p\in(1, \infty),$ with $V>0$ and
\begin{equation}\label{ET3.1}
\max_{\tu_2\in\pAct_2}\Bigl(\tfrac{1}{2}\, a^{ij}(x)\,\partial_{ij} V (x)
+ b^{i}(x, \tu_2)\, \partial_{i} V (x)\, + r(x, \tu_2) V\Bigr)=\lammax\, V(x)\quad \text{a.e. in}\; \Rd.
\end{equation}
Moreover, we have the following:
\begin{enumerate}
\item[(i)] For all $x\in\Rd$, we have
$$\lammax\;=\; \sup_{U^2\in\Uadm_2}\, \sE_x(r, U^2)\;=\; \sup_{U^2\in\Uadm_2}\,
\limsup_{T\to\infty}\, \frac{1}{T}\, \log \Exp_x\left[\E^{\int_0^T r(X_s, U^2_s)\, \D{s}}\right].$$
\item[(ii)] Any measurable selector of \eqref{ET3.1} is an optimal stationary Markov control, i.e., 
if $v^*_2$ is a measurable maximizer of \eqref{ET3.1} then we have $\lammax=\sE_x(r, v^*_2)$.
\item[(iii)] The solution of \eqref{ET3.1} is unique in the class $\Sobl^{2, p}(\Rd), \, p\in(1, \infty)$, provided $V(0)=1$.
\item[(iv)] Any optimal stationary Markov control is a measurable selector of \eqref{ET3.1}.
\end{enumerate}
\end{theorem}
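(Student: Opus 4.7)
The plan is to construct the eigenpair $(V,\lammax)$ as the limit of Dirichlet principal eigenpairs on expanding balls, mirroring the minimization scheme of \cite{biswas-11a} but with nontrivial modifications to handle the maximization. For each $n\in\NN$, Krein--Rutman/Berestycki--Nirenberg--Varadhan theory for the nonlinear operator $F[\varphi]\df\max_{\tu_2\in\pAct_2}\bigl(\tfrac12 a^{ij}\partial_{ij}\varphi+b^i(x,\tu_2)\partial_i\varphi+r(x,\tu_2)\varphi\bigr)$ yields a principal eigenpair $(V_n,\lambda_n)$ on $B_n$ with $V_n>0$, zero Dirichlet data, and $V_n(0)=1$; the eigenvalues $\lambda_n$ are nondecreasing in $n$. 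To bound them from above uniformly, I would check that $\Lyap$ is a positive supersolution of $F[\cdot]-\lambda\cdot$ for all sufficiently large $\lambda$, using $\max_{u}\bigl(\Lg^u\Lyap+r(\cdot,u)\Lyap\bigr)\le(\|r\|_\infty-\gamma)\Lyap+\beta\Ind_\sK$ under (H1), and the analogous estimate with $(\theta-1)\ell\Lyap$ under (H2). The Harnack inequality and interior $W^{2,p}$ estimates then deliver local uniform bounds on $V_n$, and a diagonal extraction yields $(V,\lammax)\in\Sobl^{2,p}(\Rd)\times\RR$ satisfying \eqref{ET3.1} with $V>0$, $V(0)=1$.

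For (i), the bound $\sup_{U^2}\sE_x(r,U^2)\le\lammax$ comes from an It\^o--Krylov computation: set $M_t\df\E^{-\lammax t+\int_0^t r(X_s,U^2_s)\D s}V(X_t)$, and observe that since $V$ satisfies the max-equation, $\Lg^{U^2_s}V+r(\cdot,U^2_s)V\le\lammax V$ for every admissible $U^2$, so $M$ is a nonnegative local supermartingale; localization with the Lyapunov function together with $\inf_{B_R}V>0$ then gives $\limsup_T T^{-1}\log\Exp_x\bigl[\E^{\int_0^T r(X_s,U^2_s)\D s}\bigr]\le\lammax$. For the reverse inequality and (ii), any measurable selector $v_2^*$ of \eqref{ET3.1} makes $V$ a positive eigenfunction of the linear operator $\Lg_{v_2^*}+r(\cdot,v_2^*)$ at eigenvalue $\lammax$, and by Lemma~\ref{L2.1} the value $\sE_x(r,v_2^*)$ is the principal eigenvalue of this operator in the sense of \eqref{P-eigen}. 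One therefore needs to show that $\lammax$ \emph{is} this principal eigenvalue, rather than merely some larger eigenvalue admitting a positive eigenfunction---the Berestycki--Rossi phenomenon cautioned about in the introduction.

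The main obstacle is precisely this identification: in contrast to the minimization case of \cite{biswas-11a,ABS}, where any measurable selector forces the limit eigenvalue to be suboptimal and thus optimal by a sandwich, here a selector only supplies $\lammax\ge\sE_x(r,v_2^*)$. To circumvent this I would perturb $r$ to a norm-like cost $r_\delta$---for instance, $r_\delta=r+\delta\ell$ under (H2), with an analogous auxiliary inf-compact construction under (H1)---and first solve the perturbed problem. The confining nature of $r_\delta$ forces any near-optimal maximizing strategy to be positive recurrent, which in turn enables the identification $\lammax^\delta=\sup_{v_2\in\Usm_2}\sE_x(r_\delta,v_2)$ via a direct ergodic argument coupled with the stochastic representation of the principal eigenfunction borrowed from \cite{ABS}. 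A stability estimate showing that $(V_\delta,\lambda_\delta)$ depends continuously on $\delta$ in $\Sobl^{2,p}\times\RR$ then allows $\delta\downarrow 0$ to conclude (i) and (ii) for the original problem.

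For (iii) and (iv), given two normalized positive solutions $V,\widetilde V$ of \eqref{ET3.1}, the relaxation to $\pAct_2$ (which convexifies $b$ and $r$ in the control variable) together with the measurable selection theorem of \cite[Chapter~18]{Ali-Bor} supplies a common measurable maximizer $v_2^*$; then $V$ and $\widetilde V$ are both positive eigenfunctions of the same linear operator $\Lg_{v_2^*}+r(\cdot,v_2^*)$ at eigenvalue $\lammax$, and uniqueness of the principal eigenfunction from \cite{ABS} gives $V=\widetilde V$. For (iv), an optimal $\hat v_2\in\Usm_2$ satisfies $\sE_x(r,\hat v_2)=\lammax$, so by Lemma~\ref{L2.1} the principal eigenfunction of $\Lg_{\hat v_2}+r(\cdot,\hat v_2)$ equals $V$ by (iii), whence $\Lg^{\hat v_2(x)}V+r(x,\hat v_2(x))V=\lammax V$ coincides with the maximum in \eqref{ET3.1} and $\hat v_2(x)$ realizes the maximum at almost every $x$.
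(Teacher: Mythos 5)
Your overall skeleton — Dirichlet principal eigenpairs on expanding balls, passage to a limit, and a perturbation of the running cost to circumvent the failure of the sandwich argument that works in the minimization case — matches the paper's strategy, and you correctly identify the Berestycki--Rossi obstacle as the crux. However, there are several genuine gaps in your execution.

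The most serious is the proof of (iii). You assert that convexification via $\pAct_2$ together with measurable selection supplies a \emph{common} maximizer $v_2^*$ for two distinct normalized solutions $V$ and $\widetilde V$. This is false in general: the maximizer of $\tu_2\mapsto b(x,\tu_2)\cdot\grad V(x)+r(x,\tu_2)V(x)$ depends on $\grad V$ and $V$, and relaxing to $\pAct_2$ makes the map \emph{linear} in $\tu_2$, so the maximum is typically achieved at an extreme point determined entirely by $\grad V(x)$; distinct $V,\widetilde V$ with different gradient directions will simply have different maximizers. The paper proves uniqueness by fixing a maximizer $v_2^*$ for $V$ only, observing that $\widetilde V$ is then a \emph{subsolution} for the corresponding linear operator $\Lg_{v_2^*}+r(\cdot,v_2^*)$, and comparing the two via the stochastic representation of $V$ plus the strong maximum principle applied to $\widetilde V-V$. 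Your argument for (iv) inherits a related circularity: you invoke (iii) to conclude that the principal eigenfunction $\tilde V$ of $\Lg_{\hat v_2}+r(\cdot,\hat v_2)$ equals $V$, but (iii) applies only to solutions of the \emph{nonlinear} equation \eqref{ET3.1}, and whether $\tilde V$ solves the nonlinear equation is exactly what you are trying to prove. Again the paper's route is to compare $V$ (a supersolution for $\Lg_{\hat v_2}$) and $\tilde V$ via strong maximum principle, concluding equality and hence that the supersolution inequality is saturated.

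Two smaller points. First, your direct supermartingale argument for $\sup_{U^2}\sE_x(r,U^2)\le\lammax$ needs $\inf_{\Rd}V>0$, not merely $\inf_{B_R}V>0$, to extract the exponential growth rate, and the limit $V$ of Dirichlet eigenfunctions may decay at infinity. That is precisely why the paper establishes $\min_{\Rd}V_m>0$ for the \emph{perturbed} eigenfunctions (Lemma~\ref{L3.4}) and obtains the upper bound there before letting $m\to\infty$; your outline presents the upper bound before the perturbation, which does not work as stated. Second, under (H1) you cannot use an ``inf-compact'' perturbation of $r$ since any inf-compact cost violates $\norm{r_\delta}_\infty<\gamma$; the paper's (H1) perturbation $r_m=\zeta_m r+(1-\zeta_m)(\norm{r}_\infty+\delta)$ is \emph{bounded}, and the crucial property it buys is $r_m-\lambda_{2,m}>0$ outside a compact set via strict monotonicity of the Dirichlet principal eigenvalue, not inf-compactness.
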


A similar minimization problem with long-run average of risk sensitive cost
  has been studied in \cite[Theorem~4.1 and 4.2]{ABS}. 
As we mentioned earlier it is not obvious that one can change a maximization problem to a minimization problem of same type.
Therefore proof Theorem~\ref{T3.1} does not follow from \cite{ABS}. We think Theorem~\ref{T3.1} would be of independent 
interest. The proof of Theorem~\ref{T3.1} is divided in several lemmas.

In what follows $B_n$ denotes the open ball of radius
$n$ around $0$. Following result follows from \cite[Theorem~1.1 and Remark~3]{Quaas-08a}
\begin{lemma}\label{L3.1}
There exists a (unique) positive $\varphi_n\in\Sobl^{2, p}(B_n)\cap\Cc(\bar{B}_n),\, p\in(1, \infty),$ and $\alpha_n\in\RR$
such that $\varphi_n(0)=1$ and
\begin{equation}\label{EL3.1}
\max_{\tu\in\pAct_2}\Bigl(\Lg^{\tu_2}\varphi_n + r(x, \tu_2)\varphi_n\Bigr)=\alpha_n\, \varphi_n(x),\quad \text{a.e. in}\; B_n,
\quad \varphi_n=0,\; \; \text{on}\; \partial B_n.
\end{equation}
Moreover, $\alpha_n<\alpha_{n+1}$ for all $n\in\NN$.
\end{lemma}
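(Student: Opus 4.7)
The plan has two parts: invoke the cited principal-eigenvalue theory for Bellman operators on $B_n$, and then prove the strict monotonicity $\alpha_n<\alpha_{n+1}$.

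For the first part, I recast the Dirichlet problem as the principal eigenvalue problem
\begin{equation*}
F[\varphi](x) \;:=\; \max_{\tu_2\in\pAct_2}\bigl(\Lg^{\tu_2}\varphi(x) + r(x,\tu_2)\varphi(x)\bigr) \;=\; \alpha\,\varphi(x)\quad \text{in } B_n,\qquad \varphi=0 \text{ on } \partial B_n,
\end{equation*}
for the fully nonlinear Bellman operator $F$. Since $\pAct_2$ is compact and $b(x,\cdot), r(x,\cdot)$ are continuous for each fixed $x$, $F$ is well defined and admits a measurable maximizing selector (cf.\ \cite[Chapter~18]{Ali-Bor}); the coefficients $a^{ij}, b^{i}(\cdot,\tu_2), r(\cdot,\tu_2)$ are bounded uniformly in $\tu_2$ on $\bar B_n$ by (A1)-(A2) and local boundedness of $r$, and $F$ is uniformly elliptic on $B_n$ by (A3). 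These are exactly the hypotheses of \cite[Theorem~1.1 and Remark~3]{Quaas-08a}, which yields a positive eigenpair $(\varphi_n,\alpha_n)\in(\Sobl^{2,p}(B_n)\cap\Cc(\bar B_n))\times\RR$, $p\in(1,\infty)$, unique up to a positive multiplicative constant; normalizing by $\varphi_n(0)=1$ then pins $\varphi_n$ down uniquely.

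For the monotonicity, I would use the Berestycki-Nirenberg-Varadhan-type variational characterization available for such Bellman operators:
\begin{equation*}
\alpha_n \;=\; \inf\bigl\{\lambda\in\RR \;:\; \exists\,\phi\in\Sob^{2,d}(B_n),\; \phi>0 \text{ in } B_n,\; F[\phi]\le\lambda\phi \text{ a.e.\ in } B_n\bigr\}.
\end{equation*}
Since $B_n\Subset B_{n+1}$ and $\varphi_{n+1}>0$ throughout $B_{n+1}$, the restriction $\varphi_{n+1}\big|_{B_n}$ is bounded away from zero on $\bar B_n$ and satisfies $F[\varphi_{n+1}]=\alpha_{n+1}\varphi_{n+1}$ pointwise on $B_n$; it is therefore admissible in the above infimum, and gives $\alpha_n\le\alpha_{n+1}$. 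To promote this to strict inequality, I argue by contradiction: if $\alpha_n=\alpha_{n+1}$, then $\varphi_{n+1}\big|_{B_n}$ is a strictly positive eigenfunction of $F$ on $B_n$ with eigenvalue $\alpha_n$, so by the uniqueness (up to positive scalar) of the principal eigenfunction there is $c>0$ with $\varphi_{n+1}=c\,\varphi_n$ on $B_n$. This contradicts $\varphi_n\equiv 0$ on $\partial B_n$ while $\varphi_{n+1}>0$ there (since $\partial B_n\subset B_{n+1}$), giving $\alpha_n<\alpha_{n+1}$.

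The main obstacle is the variational characterization of $\alpha_n$, which is substantive for a genuine Bellman (as opposed to linear) operator; once borrowed from \cite{Quaas-08a} (or alternatively re-derived via a Hopf-type sub/supersolution comparison exploiting the fact that $F$ is proper and positively homogeneous in $\varphi$), the rest of the argument reduces to the one-line boundary-value contradiction above. All remaining technical ingredients---the strong maximum principle in $\Sobl^{2,d}$, measurability of the maximizing selector, and Calder\'on-Zygmund $W^{2,p}$ estimates on $B_n$---are absorbed into the cited theorem and so do not need to be reproved here.
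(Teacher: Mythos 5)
Your overall structure matches the paper's, which simply cites \cite[Theorem~1.1 and Remark~3]{Quaas-08a} for the whole lemma (existence, uniqueness, and strict domain monotonicity). What you add is a sketch of why strict monotonicity follows; the first half of that sketch, $\alpha_n\le\alpha_{n+1}$ via the inf-characterization using $\varphi_{n+1}\big|_{B_n}$ as a competitor, is fine. The second half has a gap.

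When you say ``by the uniqueness (up to positive scalar) of the principal eigenfunction there is $c>0$ with $\varphi_{n+1}=c\,\varphi_n$ on $B_n$,'' you are applying the simplicity statement to a function $\varphi_{n+1}\big|_{B_n}$ that does \emph{not} satisfy the Dirichlet condition on $\partial B_n$ --- indeed it is strictly positive there. The simplicity result in \cite{Quaas-08a} is stated for Dirichlet eigenfunctions, so as cited it does not cover this function. What you actually need is the stronger observation that there is \emph{no} positive supersolution $\psi$ of $F[\psi]\le\alpha_n\psi$ on $B_n$ that stays bounded away from zero near $\partial B_n$. The standard way to get this is to linearize: take the maximizing selector $\bar v$ for $\varphi_n$, so that $L^{\bar v}\varphi_n=\alpha_n\varphi_n$ and (since $F$ is a max over linear operators) $L^{\bar v}\varphi_{n+1}\le F[\varphi_{n+1}]=\alpha_{n+1}\varphi_{n+1}\le\alpha_n\varphi_{n+1}$ as soon as $\alpha_n\ge\alpha_{n+1}$; then put $t=\sup_{B_n}\varphi_n/\varphi_{n+1}$, note the supremum is achieved at an interior point because $\varphi_n\to 0$ and $\varphi_{n+1}\ge\delta>0$ on $\partial B_n$, and apply the strong maximum principle (in the form \cite[Theorem~9.6]{GilTru}, after moving the positive part of the zeroth-order coefficient to the right) to $w=t\varphi_{n+1}-\varphi_n\ge 0$ to conclude $w\equiv 0$, contradicting the boundary values. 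Note this bypasses the $\le$ step entirely and gives $\alpha_n<\alpha_{n+1}$ directly. Alternatively one can quote the strict domain-monotonicity proposition in \cite{Quaas-08a} itself, which is what the paper's citation of ``Remark~3'' is doing.

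One small inaccuracy: you describe $F$ as ``proper.'' Since $r\ge 0$, the zeroth-order term of $F$ is nonnegative, so $F$ is nondecreasing (not nonincreasing) in $\varphi$ and is not proper in the usual sense; this does not affect the existence theory (one shifts by a constant) but the phrasing should be corrected.
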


Let $v^*_{2, n}$ be a measurable selector of \eqref{EL3.1}. Then
\begin{equation}\label{E3.3}
\tfrac{1}{2}\, a^{ij}(x)\,\partial_{ij} \varphi_n (x)
+ b^{i}(x, v^*_{2, n})\, \partial_{i} \varphi_n (x)\, + r(x, v^*_{2, n}) \varphi_n =\;\alpha_n\, \varphi_n, \quad \text{a.e. in}\; B_n.
\end{equation}
Since $\varphi_n(0)=1$, by Harnack's inequality \cite[Corollary~9.25]{GilTru} and \eqref{E3.3} we have for any compact set $K\subset B_n$,
\begin{equation}\label{E3.4}
\sup_{K}\, \varphi_n\; \leq\; C(K),
\end{equation}
for some constant $C(K)$, not depending on $n$ but depends on $K$ and the constants in (A1)-(A3). Therefore using
standard Sobolev estimate \cite[Theorem~9.11]{GilTru} in \eqref{E3.3} we obtain 
$\{\varphi_n\,:\, n\geq 1\}$ uniformly bounded in $\Sob^{2, p}(K), p>d$. By a
standard diagonalization argument we can extract a subsequence of $\{\varphi_n\,:\, n\geq 1\}$ that converges to some 
$V\in\Sobl^{2, p}(\Rd),\, p>1$, strongly in $\Cc^{1, \alpha}_{\mathrm{loc}}(\Rd),\, \alpha\in(0, 1),$ and weakly in $\Sobl^{2, p}(\Rd),\, p>1$. Therefore we have the following lemma

\begin{lemma}\label{L3.2}
Suppose that either (H1) or (H2) of Theorem~\ref{T3.1} holds. Then
the sequence $\{\alpha_n\, :\, n\geq 1\}$ is bounded above. Moreover, 
if $(V, \lammax)$ is any sub-sequential limit of $(\varphi_n, \alpha_n)$, then we have
\begin{equation}\label{EL3.2A}
\max_{\tu_2\in\pAct_2}\Bigl(\Lg^{\tu_2} V(x) + r(x, \tu_2)V(x) \Bigr)\;=\; \lammax\, V(x), \quad \text{a.e. in}\; \Rd, \; \text{and}\; V>0.
\end{equation}
\end{lemma}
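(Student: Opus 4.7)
My plan is to prove Lemma~\ref{L3.2} in two stages. First I would derive a uniform upper bound on $\alpha_n$ via a Lyapunov comparison argument, and then I would pass to the limit in the Dirichlet eigenvalue equation using the $\Sobl^{2,p}$ estimates together with the relaxed-control convergence criterion recorded in Section~2.

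For the boundedness of $\{\alpha_n\}$ I would work with the ratio $\psi_n \df \varphi_n/\Lyap$, where $\Lyap$ is the Lyapunov function from Condition~\ref{C2.1} or Condition~\ref{C2.2}. Since $\varphi_n\in\Cc(\bar B_n)$ vanishes on $\partial B_n$ and is strictly positive inside, while $\inf_\Rd\Lyap>0$, the function $\psi_n$ attains an interior maximum at some $x_n\in B_n$. Writing $\varphi_n=\psi_n\Lyap$ and using $\grad\psi_n(x_n)=0$ together with the fact that the Hessian of $\psi_n$ at $x_n$ is negative semidefinite, a direct computation gives
\begin{equation*}
\grad\varphi_n(x_n)\;=\;\psi_n(x_n)\,\grad\Lyap(x_n),\qquad a^{ij}\partial_{ij}\varphi_n(x_n)\;\le\;\psi_n(x_n)\,a^{ij}\partial_{ij}\Lyap(x_n),
\end{equation*}
where the second inequality uses (A3). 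Substituting into \eqref{E3.3} at $x_n$ with the outer maximizing selector $v^*_{2,n}$ and dividing by $\psi_n(x_n)>0$ yields
\begin{equation*}
\alpha_n\,\Lyap(x_n)\;\le\;\max_{\tu_2\in\pAct_2}\bigl(\Lg^{\tu_2}\Lyap(x_n)+r(x_n,\tu_2)\Lyap(x_n)\bigr).
\end{equation*}
Dividing by $\Lyap(x_n)$ and invoking \eqref{lyapunov} under (H1) gives $\alpha_n\le\beta/\inf_\Rd\Lyap+\norm{r}_\infty-\gamma$, while under (H2) the estimate \eqref{lyapunov1} with \eqref{cost} shows the ratio is bounded above on $\Rd$ (and in fact tends to $-\infty$ as $\abs{x_n}\to\infty$ thanks to inf-compactness of $(1-\theta)\ell$).

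With $\{\alpha_n\}$ bounded above and monotone (Lemma~\ref{L3.1}), it converges to some $\lammax$. The Sobolev estimates already established preceding the statement then produce, after extracting a subsequence, a limit $V\ge 0$ with $V(0)=1$, $\varphi_n\to V$ in $\Cc^{1,\alpha}_{\mathrm{loc}}(\Rd)$, and $\varphi_n\rightharpoonup V$ weakly in $\Sobl^{2,p}(\Rd)$. To pass to the limit in the nonlinear equation \eqref{EL3.1}, the ``$\le$'' direction of \eqref{EL3.2A} is obtained by choosing any Borel selector $v:\Rd\to\pAct_2$, using the pointwise inequality $\Lg^v\varphi_n+r(\cdot,v)\varphi_n\le\alpha_n\varphi_n$, and passing to the limit against test functions $\phi\in\Cc^\infty_c(\Rd)$ via the weak/strong convergence above. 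For the matching ``$\ge$'' direction, I would extract a further subsequence of the selectors $v^*_{2,n}$ converging in the compact metrizable topology on $\Usm_2$ to some $v^*\in\Usm_2$ and pass to the limit in the equality \eqref{E3.3} tested against $\phi\in\Cc^\infty_c(\Rd)$. Here the convergence criterion in $\Usm_2$ recalled in Section~2 applies once a cutoff is inserted so that $b(\cdot,u)$ and $r(\cdot,u)$ are placed in $\Cc_b(\Rd\times\Act_2)$ on the support of the test function. This identifies $v^*$ as a measurable maximizing selector for the limit equation and completes \eqref{EL3.2A}. Finally, $V>0$ everywhere follows from the strong maximum principle applied to the resulting linear equation $\Lg^{v^*}V+(r(\cdot,v^*)-\lammax)V=0$ together with $V\ge 0$ and $V(0)=1$.

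The main technical obstacle I anticipate is the ``$\ge$'' direction in the passage to the limit for the fully nonlinear operator. Weak convergence of the Hessian is enough to handle the linear terms, but identifying the limiting measurable selector $v^*$ and controlling the dependence of the coefficients on the control variable requires the relaxed-control topology on $\Usm_2$; the delicate point is to localize via cutoffs so that $b(\cdot,u)$ and $r(\cdot,u)$---which are only locally bounded---can be placed in $\Cc_b(\Rd\times\Act_2)$ as required by the convergence criterion.
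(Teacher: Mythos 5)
Your proposal takes a genuinely different route from the paper's in both stages, and each stage has a technical wrinkle worth flagging.

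For the upper bound on $\alpha_n$, the paper's proof does not use a Lyapunov comparison at a maximum point. Instead it applies the It\^{o}--Krylov formula to \eqref{E3.3} along a measurable selector $v^*_{2,n}$ (extended to $\Rd$ by a fixed constant), obtaining $\varphi_n(x)\le\norm{\varphi_n}_\infty\,\Exp^{v^*_{2,n}}_x\bigl[\E^{\int_0^T(r-\alpha_n)\,\D{s}}\bigr]$, takes logarithms, divides by $T$, and lets $T\to\infty$ to get $\alpha_n\le\sE_x(r,v^*_{2,n})$; the right-hand side is then bounded uniformly in $n$ directly from \eqref{lyapunov} or \eqref{lyapunov1}. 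Your analytic comparison via $\psi_n=\varphi_n/\Lyap$ is in spirit a valid alternative and yields explicit bounds, but the pointwise evaluation $D^2\psi_n(x_n)\le 0$ at the maximizer is not immediate: under the hypotheses of Theorem~\ref{T3.1} the coefficients $b(\cdot,u_2)$ and $r(\cdot,u_2)$ are only locally bounded Borel, so $\varphi_n\in\Sobl^{2,p}(B_n)$ with $p>d$ is merely $\Cc^{1,\alpha}$, and its Hessian exists a.e.\ but not at the specific point $x_n$. To make your step rigorous you need the Bony maximum principle (the $\essliminf$ version of the maximum principle for strong $W^{2,p}$ solutions), or a touching argument in the spirit of the Berestycki--Nirenberg--Varadhan definition of the principal eigenvalue; as written, the step does not stand for $W^{2,p}$ functions.

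For the passage to the limit, your plan to extract a limit of the selectors $v^*_{2,n}$ in the relaxed-control topology and then pass to the limit in the linear equation is considerably more machinery than the problem requires, and it is delicate for exactly the reason you anticipate: the integrand $b(x,u_2)\cdot\grad\varphi_n(x)\chi(x)$ depends on $n$ through $\grad\varphi_n$, so the $\Usm_2$-convergence criterion quoted in Section~2 does not apply directly; one must first replace $\grad\varphi_n$ by $\grad V$ using the $\Cc^{1,\alpha}_{\mathrm{loc}}$ convergence and only then invoke the criterion. The paper sidesteps all of this with a single observation you did not exploit: the nonlinear term $\max_{\tu_2}\bigl(b(x,\tu_2)\cdot\grad\varphi_n(x)+r(x,\tu_2)\varphi_n(x)\bigr)$ depends on $\varphi_n$ only through $(\grad\varphi_n,\varphi_n)$, both of which converge locally uniformly, so the whole max term converges locally uniformly to $\max_{\tu_2}\bigl(b(x,\tu_2)\cdot\grad V(x)+r(x,\tu_2)V(x)\bigr)$ --- this is display \eqref{EL3.2D}. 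Testing against $\chi\in\Cc^\infty_c$, using weak $\Sobl^{2,p}$ convergence for the Hessian term alone, and noting $V\in\Sobl^{2,p}$ then yields \eqref{EL3.2A} with no selectors and no splitting into ``$\le$'' and ``$\ge$''. The key structural point, worth internalizing, is that the operator is fully nonlinear only in the first-order and zeroth-order parts, so strong $\Cc^1$ convergence already handles the nonlinearity while weak $W^{2,p}$ convergence handles the Hessian. Your overall strategy can be completed, but you must add the Bony maximum principle in stage one and the triangle-inequality localization in stage two; the paper's route avoids both.
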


\begin{proof}
Let us first show that $\{\alpha_n\}$ is bounded above. Let $v^*_{2, n}$ be a measurable selector of \eqref{EL3.1}.
Extend the Markov control in $\Rd$ be settings $v^*_{2, n}(x)=u_2$ for all $x\in B^c_n$ where 
$u_2\in\pAct_2$ is fixed. Let $\uptau_n=\uptau(B_n)$. Then appying It\^{o}-Krylov formula \cite[p. 122]{Krylov} to \eqref{E3.3}
we have, for $x\in B_1$
\begin{align*}
\varphi_n(x) &= \Exp^{v^*_{2,n}}_x\left[\E^{\int_0^{T}[r(X_s, v^*_{2, n}(X_s)) -\alpha_n]\, \D{s}} 
\varphi_n(X_T)\Ind_{\{T<\uptau_n\}}\right]
\\
&\le\; \norm{\varphi_n}_\infty \,
\Exp^{v^*_{2,n}}_x\left[\E^{\int_0^{T}[r(X_s, v^*_{2, n}(X_s)) -\alpha_n]\, \D{s}} \right].
\end{align*}
Therefore taking logarithm on both sides, diving by $T$ and letting $T\to\infty$ we get
\begin{equation}\label{EL3.2B}
\alpha_n\leq \sE_x(r, v^*_{2, n}).
\end{equation}
Under condition~(H1) we have $r$ bounded and thus 
$$\sup_{ U^2\in\Uadm_2}\sE_x(r, U^2)\leq \norm{r}_\infty<\infty.$$
Suppose condition~(H2) holds. It is easy to see from \eqref{lyapunov1} that
$$\sup_{U^2\in\Uadm_2}\sE_x(\ell, U^2)\leq \tfrac{\beta}{\min_{\sK}\Lyap}.$$
By \eqref{cost},   $\max_{\tu_2\in\pAct_2}r(\cdot, \tu_2)\leq \kappa + \ell(\cdot)$ for  some positive $\kappa$.
Hence
$$\sup_{U^2\in\Uadm_2}\sE_x(r, U^2)\leq \kappa + \tfrac{\beta}{\min_{\sK}\Lyap}.$$
Combining these with \eqref{EL3.2B} we have
\begin{equation}\label{EL3.2C}
\alpha_n\leq \kappa_1, \quad \text{for all}\; n\geq 1,\; \text{and for some constant }\; \kappa_1.
\end{equation}
Hence first part of the lemma follows from \eqref{EL3.2C}.

Since $\{\alpha_n\,:\, n\geq 1\}$ is bounded above, $\lammax=\lim_{n\to\infty}\alpha_n$ exists. Let $V$ be a sub-sequential limit
of $\varphi_n$ as obtained above. Since $\varphi_n\to V$ in $\Cc^{1, \alpha}_{\mathrm{loc}}(\Rd)$ we have, for
any compact set $K$,
\begin{equation}\label{EL3.2D}
\sup_{x\in K}\Bigl|\max_{\tu_2\in\pAct_2}(b(x, \tu_2)\cdot \grad\varphi_n(x) + r(x, \tu_2)\varphi_n(x))
- \max_{\tu_2\in\pAct_2}(b(x, \tu_2)\cdot \grad V(x) + r(x, \tu_2) V(x)) \Bigr|
\end{equation}
tending to $0$, as $n\to\infty$. Now let $\chi$ be a smooth function with compact support. Then using the observation in \eqref{EL3.2D}
and using \eqref{EL3.1} we obtain, as $n\to\infty$, 
\begin{align}\label{EL3.2E}
 &\tfrac{1}{2}\int_{\Rd} a^{ij}(x) \partial_{ij} V(x) \chi(x) \, \D{x} 
 + \int_{\Rd} \max_{\tu_2\in\pAct_2}(b(x, \tu_2)\cdot \grad V(x) + r(x, \tu_2) V(x))\chi(x) \, \D{x}\nonumber
 \\
&\qquad\qquad =\lammax\int_{\Rd} V(x)\chi(x)\, \D{x}.
\end{align}
Since $\chi$ is arbitrary and $V\in\Sobl^{2, p}(\Rd), p>1,$  \eqref{EL3.2E} implies that 
$$\max_{\tu_2\in\pAct_2}\Bigl(\Lg^{\tu_2} V(x) + r(x, \tu_2)V(x) \Bigr)\;=\; \lammax\, V(x), \quad \text{a.e. in}\; \Rd.$$
It is easy to see that $V\geq 0$ and $V(0)=1$. Therefore by an application of Harnack's inequality in the above equation
we get $V>0$ in $\Rd$. Hence the proof.
\end{proof}

Next we show that $V$ has a stochastic representation.
\begin{lemma}\label{L3.3}
Suppose the assumptions of Theorem~\ref{T3.1} hold. Then for any sub-sequential limit $V$, as obtained in Lemma~\ref{L3.2},
there exists a compact set $\sB$ such that for any measurable selector $v^*_2$ of
\eqref{EL3.2A} and any compact ball $\sB_1\supset \sB$ we have
\begin{equation}\label{EL3.3A}
V(x)\;=\; \Exp^{v^*_2}_x\left[\E^{\int_0^{\uuptau_1}[r(X_s,  v^*_2(X_s))-\lammax]\, \D{s}} V(X_{\uuptau_1})
\right], \quad \text{for}\quad x\in\sB^c_1,
\end{equation}
where $\uuptau_1=\uptau(\sB^c_1)$.
\end{lemma}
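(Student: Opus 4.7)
The plan is to apply the It\^o-Krylov formula and then send a spatial cutoff to infinity by means of a Lyapunov supermartingale. Fix a measurable selector $v^*_2$; since $v^*_2$ realizes the max in \eqref{EL3.2A} we have $\Lg^{v^*_2}V+(r(\cdot,v^*_2)-\lammax)V=0$ almost everywhere in $\Rd$, and $V\in\Sobl^{2,p}(\Rd)$ for every $p\in(1,\infty)$. Applying It\^o-Krylov to $V(X_t)\,\E^{\int_0^t(r(X_s,v^*_2(X_s))-\lammax)\,\D{s}}$ under $\Prob^{v^*_2}_x$, stopped at $\uuptau_1\wedge\uptau_R\wedge T$ with $\uptau_R\df\uptau(B_R)$ and $B_R\supset\sB_1$, yields
\[
V(x)=\Exp^{v^*_2}_x\Bigl[V(X_{\uuptau_1\wedge\uptau_R\wedge T})\,\E^{\int_0^{\uuptau_1\wedge\uptau_R\wedge T}(r-\lammax)\,\D{s}}\Bigr].
\]
Since $\uuptau_1\wedge\uptau_R$ is a.s.\ finite (exit from a bounded annulus under nondegeneracy), passing $T\to\infty$ removes the $T$-cutoff by bounded convergence on each event $\{\uuptau_1\wedge\uptau_R\le T\}$.

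Next I would construct the Lyapunov supermartingale used to pass $R\to\infty$. Choose a compact $\sB\supset\sK$ large enough that, on $\sB^c$,
\[
\sup_{u\in\Act_2}\bigl(\Lg^u\Lyap+(r(\cdot,u)-\lammax)\Lyap\bigr)\;\le\;-\mu\,\Lyap
\]
for some $\mu>0$. Under (H1) this holds with $\mu=\gamma-\norm{r}_\infty>0$, using $\Lg^u\Lyap\le-\gamma\Lyap$ on $\sK^c$ together with $\lammax\ge 0$ (a consequence of $r\ge 0$ via the Feynman-Kac representation); under (H2) the same bound follows from inf-compactness of $\ell$ and $\max_u r\le\theta'\ell+\kappa$ outside a compact set, for some $\theta<\theta'<1$. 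For any $\sB_1\supset\sB$ and any selector $v^*_2$, this makes $N_t\df\Lyap(X_t)\,\E^{\int_0^t(r-\lammax)\,\D{s}}\,\E^{\mu t}$ a nonnegative supermartingale on $[0,\uuptau_1]$, so $\Exp^{v^*_2}_x[N_{t\wedge\uuptau_1}]\le\Lyap(x)$ for every $t$. I would additionally need the a priori bound $V\le C\,\Lyap$ on $\Rd$, obtained by first establishing the analogous estimate for $\varphi_n$ uniformly in $n$ through a Harnack-chain argument (in the spirit of \cite{ABS}) and then passing to the subsequential limit.

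With these tools I would send $R\to\infty$ in the stopped identity. Splitting according to $\{\uuptau_1\le\uptau_R\}$ and $\{\uptau_R<\uuptau_1\}$, the first contribution converges to the right-hand side of \eqref{EL3.3A} by monotone convergence, since $\uptau_R\to\infty$ a.s.\ by nonexplosion under (A2) and $\uuptau_1<\infty$ a.s.\ by the positive recurrence implied by the Lyapunov hypothesis. The crux of the proof is showing that the boundary contribution
\[
\Exp^{v^*_2}_x\Bigl[V(X_{\uptau_R})\,\E^{\int_0^{\uptau_R}(r-\lammax)\,\D{s}}\Ind_{\uptau_R<\uuptau_1}\Bigr]
\]
tends to $0$. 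Using $V\le C\,\Lyap$, this quantity is dominated by $C\,\Exp^{v^*_2}_x[N_{\uptau_R\wedge\uuptau_1}\,\E^{-\mu\uptau_R}\Ind_{\uptau_R<\uuptau_1}]$, which I would split at a deterministic $T$: on $\{\uptau_R>T\}$ the inequality $\E^{-\mu\uptau_R}\le\E^{-\mu T}$ together with $\Exp N_{\uptau_R\wedge\uuptau_1}\le\Lyap(x)$ yields a contribution of size $C\E^{-\mu T}\Lyap(x)$, while on $\{\uptau_R\le T\}$ I would use $\Prob^{v^*_2}_x(\uptau_R\le T)\to 0$ as $R\to\infty$ (nonexplosion) together with the ordinary Lyapunov estimate $\Exp\Lyap(X_{t\wedge\uptau_R})\le\Lyap(x)+\beta t$ to force the contribution to zero. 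Letting first $R\to\infty$ and then $T\to\infty$ then completes the proof. The main obstacle is precisely this combined uniform integrability/Lyapunov decay estimate, and it is here that the sharpness of $\norm{r}_\infty<\gamma$ under (H1), respectively $\limsup r/\ell<1$ under (H2), is essential.
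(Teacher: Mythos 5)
There is a genuine gap in your treatment of the boundary contribution on the event $\{\uptau_R\le T\}$. After invoking $V\le C\Lyap$ and the supermartingale $N_t=\Lyap(X_t)\,\E^{\int_0^t(r-\lammax)\,\D{s}}\,\E^{\mu t}$, you need to show that
\[
\Exp^{v^*_2}_x\bigl[N_{\uptau_R\wedge\uuptau_1}\,\E^{-\mu\uptau_R}\,\Ind_{\{\uptau_R\le T,\ \uptau_R<\uuptau_1\}}\bigr]\longrightarrow 0
\]
as $R\to\infty$, for fixed $T$. Since $\E^{-\mu\uptau_R}\le 1$ here, this reduces to showing $\Exp[N_{\uptau_R\wedge\uuptau_1}\Ind_{\{\uptau_R\le T\}}]\to 0$, and you offer only $\Prob(\uptau_R\le T)\to 0$ together with $\Exp\Lyap(X_{t\wedge\uptau_R})\le\Lyap(x)+\beta t$. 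That pair of facts gives you uniform boundedness in $R$ of $\Exp[N_{\uptau_R\wedge\uuptau_1}]$ (indeed $\le\Lyap(x)$ directly from the supermartingale property), but boundedness of expectations plus vanishing of the indicator probability does \emph{not} imply the product goes to zero: you would need uniform integrability of $\{N_{\uptau_R\wedge\uuptau_1}\}_R$, and neither nonexplosion nor the quoted first-moment Lyapunov estimate supplies it. The nub of the difficulty is that your a priori bound $V\le C\Lyap$ has exponent $1$, so $V(X_{\uptau_R})\,\E^{\int_0^{\uptau_R}(r-\lammax)\,\D{s}}$ is comparable to exactly the quantity whose expectation you control, with no margin left over. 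Also, under Condition~\ref{C2.1} (i.e.\ (H1)) the Lyapunov function $\Lyap$ need not be inf-compact (see Example~2.2), so no quantitative decay of $\Prob(\uptau_R\le T)$ can be extracted from $\Lyap$.

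The paper closes this gap by proving a strictly \emph{sub-linear} growth estimate $V\le\kappa_2\Lyap^{\hat\theta}$ with $\hat\theta\in(0,1)$, obtained not by a Harnack-chain argument but from the stochastic representation of the Dirichlet eigenfunctions $\varphi_n$ together with Jensen's inequality ($\Exp[\E^{\hat\theta\gamma\uuptau}]\le(\Exp[\E^{\gamma\uuptau}])^{\hat\theta}$) and the Lyapunov bound $\Exp[\E^{\gamma\uuptau}\Lyap(X_\uuptau)]\le\Lyap(x)$; the strict inequality $\hat\theta<1$ uses precisely the margins $\norm{r}_\infty<\gamma$ (resp.\ $\theta<1$ under (H2)) and $\lammax\ge 0$, which you correctly established. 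With $V\le\kappa_2\Lyap^{\hat\theta}$ in hand, the paper then splits $\partial B_n$ according to whether $V\ge m$ or $V<m$: on $\{V<m\}$ the term is bounded by $m\,\Exp[\E^{\gamma\uptau_n}\Ind_{\{\uptau_n<\uuptau_1\}}]\to 0$; on $\{V\ge m\}$ the sub-linear bound yields $V\le\kappa_2(m/\kappa_2)^{(\hat\theta-1)/\hat\theta}\Lyap$, and the $(\hat\theta-1)/\hat\theta<0$ exponent makes that piece vanish as $m\to\infty$ after applying $\Exp[\E^{\gamma\uptau_n}\Lyap(X_{\uptau_n})\Ind_{\{\uptau_n<\uuptau_1\}}]\le\Lyap(x)$. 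This is exactly the uniform-integrability mechanism that your proposal is missing. Your supermartingale construction and the split at a deterministic $T$ are sound ideas, and your handling of $\{\uptau_R>T\}$ via the $\E^{-\mu T}$ decay is correct, but to complete the argument you should replace $V\le C\Lyap$ by the sharper $V\le\kappa_2\Lyap^{\hat\theta}$ as in the paper (or find some other route to uniform integrability).
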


\begin{proof}
Let us first argue that $\lammax$ is non-negative. If not, let us assume that $\lammax<0$. Let $v^*_2$ be a measurable
selector of \eqref{EL3.2A}. Existence of such a measurable selector is assured by \cite[Theorem~18.13]{Ali-Bor}.
Thus we have 
\begin{equation}\label{EL3.3C}
\Lg_{v^*_2} V(x) + r(x, v^*_2(x)) V(x)\; =\; \lammax V(x), \quad \text{a.e. in}\; \Rd.
\end{equation}
Applying It\^{o}-Krylov formula \cite[p. 122]{Krylov} to \eqref{EL3.3C} we obtain for any $T>0$ and for any $n$, that
\begin{equation}\label{EL3.3D}
V(x)\;=\; \Exp^{v^*_2}_x\left[\E^{\int_0^{\uptau(B^c_1)\wedge T\wedge \uptau_n }[r(X_s,  v^*_2(X_s))-\lammax]\, \D{s}} 
V(X_{\uptau(B^c_1)\wedge T\wedge \uptau_n})
\right], \quad \text{for}\quad x\in B^c_1.
\end{equation}
Letting $T\to\infty$, and $n\to\infty$ in \eqref{EL3.3D} and applying Fatou's lemma twice we get
$$V(x)\;\geq\; \Exp^{v^*_2}_x\left[\E^{\int_0^{\uptau(B^c_1)}[r(X_s,  v^*_2(X_s))-\lammax]\, \D{s}} 
V(X_{\uptau(B^c_1)}) \right]\;\geq\; \min_{\bar{B}_1}\, V, $$
where we have used the fact $(r -\lammax)\geq 0$ which follows from the fact that $r\geq 0$ and $\lammax<0$. Therefore we have $\min_{\Rd}\, V>0$. Again
a further application of It\^{o}-Krylov formula \cite[p. 122]{Krylov} to \eqref{EL3.3C}, followed by Fatou's lemma, gives us 
$$V(x) \;\geq\; \Exp^{v^*_2}_x\left[\E^{\int_0^{T}[r(X_s,  v^*_2(X_s))-\lammax]\, \D{s}} 
V(X_{T}) \right]\;\geq\; \min_{\Rd}V\,\Exp^{v^*_2}_x\left[\E^{\int_0^{T}[r(X_s,  v^*_2(X_s))-\lammax]\, \D{s}}  \right].$$
Now taking logarithm on both sides, diving by $T$ and letting $T\to \infty$ we obtain
$$\lammax\;\geq\;\sE_x(r, v^*_2)\;\geq\; 0.$$
But this is contradicting 
the fact that $\lammax<0$. Hence we have $\lammax\geq 0$.

First we note that there exists a compact set $\sB$ and $\hat{\theta}\in(0, 1)$ such that under (H1)
\begin{equation}\label{EL3.3E}
\Bigl(\max_{u\in\Act_2} r(x, u)-\lammax\Bigr)\;< \; \hat\theta\, \gamma, \quad \text{for all}\; x\in\sB^c,
\end{equation}
and under condition (H2),
\begin{equation}\label{EL3.3F}
\Bigl(\max_{u\in\Act_2} r(x, u)-\lammax\Bigr)\;< \; \hat\theta\, \ell(x), \quad \text{for all}\; x\in\sB^c.
\end{equation}
\eqref{EL3.3E} follows from the fact $0\leq \lammax\leq \norm{r}_\infty<\gamma$ whereas \eqref{EL3.3F}
follows from \eqref{cost}.  
Since $\alpha_n\to\lammax$ it is easy to see that we can find $\hat\theta\in(0,1)$ such that \eqref{EL3.3E} (and \eqref{EL3.3F})
holds true when $\lammax$ is replaced by $\alpha_n$ for all large $n$. Let  $\uuptau$ denote the hitting time to $\sB$.
Without loss of generality we may assume 
$\sB\supset \sK$ where $\sK$ is same as in \eqref{lyapunov} and \eqref{lyapunov1}. We give the proof
using \eqref{EL3.3E}, and the proof using \eqref{EL3.3F} follows by repeating the same argument.
$\varphi_n$ being positive we note that $\alpha_n$ is the principal eigenvalue of the elliptic operator on the LHS of \eqref{E3.3}.
Thus using \eqref{E3.3} and stochastic representation from \cite[Lemma~2.10(i)]{ari-anup} we have, for $x\in \sB^c$ 
and $\sB\subset B_n$, for all $n$ sufficiently large
\begin{align}\label{EL3.3FA}
\varphi_n(x) &\;=\;\Exp^{v^*_{2, n}}_x\left[\E^{\int_0^{\uuptau}[r(X_s, v^*_{2, n}(X_s))-\alpha_n]\, \D{s}} 
\varphi_n(X_{\uuptau_1})
\Ind_{\{\uuptau<\uptau_n\}}\right]\nonumber
\\
&\leq\; \sup_{\sB}\varphi_n\, \Exp^{v^*_{2, n}}_x\left[\E^{\hat{\theta}\gamma{\uuptau}} \right]\nonumber
\\
&\leq\; \sup_{\sB}\varphi_n\, \Bigl(\Exp^{v^*_{2, n}}_x\left[\E^{\gamma{\uuptau}} \right]\Bigr)^{\hat{\theta}}\nonumber
\\
&\leq\; \sup_{\sB}\varphi_n\,\frac{1}{\min_{\sB}\Lyap^{\hat{\theta}}} 
\Bigl(\Exp^{v^*_{2, n}}_x\left[\E^{\gamma{\uuptau}}\Lyap(X_{\uuptau}) \right]\Bigr)^{\hat{\theta}}\nonumber
\\
&\leq\; \kappa_2\, (\Lyap(x))^{\hat{\theta}},
\end{align}
for some constant $\kappa_2$, where the last inequality follows applying It\^{o}'s formula to \eqref{lyapunov}. Note that
$\kappa_2$ can be chosen independent of $n$, by \eqref{E3.4} thanks to the Harnack's inequality.  
Now we proceed to show the stochastic representation \eqref{EL3.3A}. Fix the compact set $\sB$ as chosen in 
\eqref{EL3.3E} and \eqref{EL3.3F}. Let $\sB_1$ be a compact ball containing $\sB$ and $\uuptau_1=\uptau(\sB_1^c)$.
As earlier we show \eqref{EL3.3A} under assertion (H1) and the proof under condition (H2) would be analogous.
Applying It\^{o}-Krylov
formula to \eqref{EL3.3C} we obtain, for any $T>0$,
\begin{equation}\label{EL3.3G}
V(x)\;=\; \Exp^{v^*_2}_x\left[\E^{\int_0^{\uuptau_1\wedge\uptau_n\wedge T}(r(X_s, v^*_2(X_s))-\lammax)\, \D{s}}\, 
V(X_{\uuptau_1\wedge\uptau_n\wedge T})\right], \quad x\in\sB^c_1\cap B_n, \; \sB_1\subset B_n\,,
\end{equation}
where $\uptau_n=\uptau(B_n)$ denotes the exit time from the ball $B_n$.
Since 
$$\Exp^{v^*_2}_x\left[\E^{\gamma\uuptau_1}\, \right]\;<\;\infty, \quad \text{for}\; x\in\sB^c_1,$$
by \eqref{lyapunov}, and $V$ is bounded in $\sB^c_1\cap B_n$, for every fixed $n$,  letting $T\to\infty$ in \eqref{EL3.3G}, we have
\begin{equation}\label{EL3.3H}
V(x)\;=\; \Exp^{v^*_2}_x\left[\E^{\int_0^{\uuptau_1\wedge\uptau_n}(r(X_s, v^*_2(X_s))-\lammax)\, \D{s}}\, 
V(X_{\uuptau\wedge\uptau_n})\right], \quad x\in\sB^c_1\cap B_n.
\end{equation}
On the other hand, since 
$$\Exp^{v^*_2}_x\left[\E^{\gamma(\uuptau_1\wedge\uptau_n)}\, \right]= 
\Exp^{v^*_2}_x\left[\E^{\gamma\,\uuptau_1}\, \Ind_{\{\uuptau_1<\uptau_n\}}\right] + \Exp^{v^*_2}_x\left[\E^{\gamma\,\uptau_n}\, \Ind_{\{\uuptau_1>\uptau_n\}}\right];$$
and by monotone convergence theorem the first two quantities converges to the same limit, we have 
\begin{equation}\label{EL3.3J}
\Exp^{v^*_2}_x\left[\E^{\gamma\,\uptau_n}\, \Ind_{\{\uuptau_1>\uptau_n\}}\right]\xrightarrow[n\to\infty]{} 0,\quad
\text{for}\; x\in \sB_1^c.
\end{equation}
We denote by $\Gamma(n, m)=\{x\in\partial B_n\; :\; V(x)\geq m\}$ for $m\geq 1$. Then using \eqref{EL3.3J}
\begin{align*}
&\Exp^{v^*_2}_x\left[\E^{\int_0^{\uptau_n}(r(X_s, v^*_2(X_s))-\lammax)\, \D{s}}\, 
V(X_{\uptau_n})\Ind_{\{\uptau_n<\uuptau_1\}}\right]
\\
&\quad \le\;
m\,\Exp^{v^*}_x\left[\E^{\gamma\, \uptau_n}\, \Ind_{\{\uptau_n<\uuptau_1\}}\right]
+ \Exp^{v^*}_x\left[\E^{\gamma\, \uptau_n}\, 
V(X_{\uptau_n})\Ind_{\{x\in\Gamma(n,m)\}}\Ind_{\{\uptau_n<\uuptau_1\}}\right]
\\
&\quad \le\;
m\,\Exp^{v^*}_x\left[\E^{\gamma\, \uptau_n}\, \Ind_{\{\uptau_n<\uuptau_1\}}\right]
+ \kappa_2\, \Exp^{v^*}_x\left[\E^{\gamma\, \uptau_n}\, 
(\Lyap(X_{\uptau_n}))^{\hat\theta}\Ind_{\{x\in\Gamma(n,m)\}}\Ind_{\{\uptau_n<\uuptau_1\}}\right]
\\
&\quad \le\;
m\,\Exp^{v^*}_x\left[\E^{\gamma\, \uptau_n}\, \Ind_{\{\uptau_n<\uuptau_1\}}\right]
+ \kappa_2 \Bigl[\frac{m}{\kappa_2}\Bigr]^{\frac{\hat\theta-1}{\hat\theta}}\, \Exp^{v^*}_x\left[\E^{\gamma\, \uptau_n}\, 
\Lyap(X_{\uptau_n})\Ind_{\{\uptau_n<\uuptau_1\}}\right]
\\
&\quad \le\;
m\,\Exp^{v^*}_x\left[\E^{\gamma\, \uptau_n}\, \Ind_{\{\uptau_n<\uuptau_1\}}\right]
+ \kappa_2 \Bigl[\frac{m}{\kappa_2}\Bigr]^{\frac{\hat\theta-1}{\hat\theta}}\, \Lyap(x)
\\
&\quad \longrightarrow 0\, ,
\end{align*}
by letting $n\to\infty$ first and then letting $m\to\infty$.
Therefore letting $n\to\infty$ in \eqref{EL3.3H} we have \eqref{EL3.3A}. Note that under (H2) in order to make analogous argument we use the fact that 
$$\Exp^{v^*_2}_x\left[\E^{\int_0^{\uuptau_1}\ell(X_t)dt}\, \right]\;<\;\infty, \quad \text{for}\; x\in\sB^c_1,$$
by \eqref{lyapunov1}.
\end{proof}

Now we are ready to prove Theorem~\ref{T3.1}. The idea of the proof is the following : First we perturb
 the cost to a {\it norm-like} cost and prove Theorem~\ref{T3.1} for the perturbed version. Then passing to the limit
 we show that the limiting eigenfunction also has stochastic representation \eqref{EL3.3A} and we use Lemma~\ref{L3.3} as a key ingredient in proving the result.
 
 Let us now introduce the perturbation of $r$, denoted by $r_m$. For $m\in\NN$, let $\zeta_m:\Rd\to[0, 1]$ 
 be a smooth function and satisfy 
 $\zeta_m(x)=1$ in $B_m$, $\zeta_m(x)=0$ in $B^c_{m+1}$.
For (H1), we choose $\delta$ small enough that so that $\norm{r}_\infty+\delta<\gamma$ and define 
$$r_m(x, \tu_2)= \zeta_m(x) r(x, u_2) + (1-\zeta_m(x)) (\norm{r}_\infty+\delta), \quad u_2\in\Act_2.$$
For (H2), we define
$$r_m(x, \tu_2)= r(x, u_2) + \frac{1}{m} \ell(x), \quad u_2\in\Act_2.$$
Choose $m$ large enough so that $r_m$ satisfies \eqref{cost}.
Note that $r_m$ is locally finite, continuous in
$\tu_2$ for every fixed $x$. Also Lemma~\ref{L3.1}, ~\ref{L3.2} and \ref{L3.3} holds with $r$ replaced by $r_m$. Thus we can
find an eigenpair $(V_m, \lambda_{2, m})\in\Sobl^{2, p}(\Rd)\times\RR$, $V_m>0,$ and
\begin{equation}\label{E3.16}
\max_{\tu_2\in\pAct_2}\Bigl(\Lg^{\tu_2} V_m(x) + r_m(x, \tu_2)V_m(x) \Bigr)\;=\; \lambda_{2, m}\, V_m(x), \quad \text{a.e. in}\; \Rd.
\end{equation}

\begin{lemma}\label{L3.4}
The solution $V_m$ of \eqref{E3.16} is bounded from below i.e., $\min_{\Rd} V_m>0$. Moreover, for all $x\in\Rd$,
\begin{equation}\label{EL3.4A}
\sup_{U^2\in\Uadm_2}\, \sE_x(r_m, U^2)\;=\; \lambda_{2, m}.
\end{equation}
\end{lemma}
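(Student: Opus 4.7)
The plan is to leverage the norm-like character of the perturbed running cost $r_m$, together with the stochastic representation of Lemma~\ref{L3.3}, in order both to bound $V_m$ away from zero and to identify $\lambda_{2, m}$ as the supremal risk-sensitive cost.

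For the first assertion I would fix a measurable outer maximizer $v^*$ of \eqref{E3.16}. Applying Lemma~\ref{L3.3} with $r$ replaced by $r_m$ produces a compact ball $\sB_1$, which may be chosen as large as convenient, such that
$$V_m(x)\;=\;\Exp^{v^*}_x\!\left[\E^{\int_0^{\uuptau_1}[r_m(X_s, v^*(X_s))-\lambda_{2, m}]\,\D{s}}\,V_m(X_{\uuptau_1})\right],\qquad x\in\sB_1^c,$$
where $\uuptau_1=\uptau(\sB_1^c)$. The perturbation now plays the central role. Under (H1), $r_m\equiv\norm{r}_\infty+\delta$ on $B_{m+1}^c$, and the chain of bounds $\lambda_{2, m}\le \sup_{U^2}\sE_x(r_m, U^2)\le \norm{r_m}_\infty=\norm{r}_\infty+\delta$ (which is proved exactly as in \eqref{EL3.2B}) forces $r_m-\lambda_{2, m}\ge 0$ on $B_{m+1}^c$; under (H2), $r_m\ge \ell/m$ is inf-compact, so again $r_m\ge\lambda_{2, m}$ outside a sufficiently large compact set. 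Enlarging $\sB_1$ to lie outside this set, the integrand $r_m-\lambda_{2, m}$ is nonnegative along the path until $\uuptau_1$, so $\E^{\int_0^{\uuptau_1}(r_m-\lambda_{2, m})\,\D{s}}\ge 1$ and hence $V_m(x)\ge\min_{\partial\sB_1} V_m$ for $x\in\sB_1^c$. Since $V_m$ is continuous and strictly positive on the compact $\overline{\sB_1}$ by Harnack's inequality, this yields $\inf_{\Rd} V_m>0$.

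For the identity $\sup_{U^2\in\Uadm_2}\sE_x(r_m, U^2)=\lambda_{2, m}$ I would prove the two inequalities separately. For the inequality ``$\le$'', I fix an arbitrary $U^2\in\Uadm_2$; the maximum in \eqref{E3.16} gives the pointwise a.e.\ bound $\Lg^{U^2_t}V_m + r_m(X_t, U^2_t)V_m\le \lambda_{2, m}V_m$. Applying It\^o--Krylov to $V_m(X_t)\E^{\int_0^t (r_m-\lambda_{2, m})\,\D{s}}$ shows that this process is a nonnegative local supermartingale; localizing by $\uptau_n$, letting $n\to\infty$ by Fatou's lemma, and invoking the lower bound $V_m\ge \inf_{\Rd}V_m>0$ just established gives
$$\Exp^{U^2}_x\!\left[\E^{\int_0^T r_m(X_s, U^2_s)\,\D{s}}\right]\;\le\;\frac{V_m(x)}{\inf_{\Rd}V_m}\,\E^{\lambda_{2, m}T},$$
from which $\sE_x(r_m, U^2)\le \lambda_{2, m}$ follows upon letting $T\to\infty$.

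For the reverse inequality I would use the selector $v^*$: under $v^*$ the equation \eqref{E3.16} is an equality, so $V_m$ is a positive solution of $\Lg_{v^*}V_m+r_m(\cdot, v^*(\cdot))V_m=\lambda_{2, m}V_m$, and Lemma~\ref{L3.3} furnishes its stochastic representation. By the characterization of the principal eigenpair in \cite[Theorems~3.2--3.3]{ABS}, this representation identifies $V_m$ as the principal eigenfunction and hence $\lambda_{2, m}$ as the principal eigenvalue of $\Lg_{v^*}+r_m(\cdot, v^*(\cdot))$ in the sense of \eqref{P-eigen}; Lemma~\ref{L2.1} then equates this principal eigenvalue with $\sE_x(r_m, v^*)$, producing $\sup_{U^2}\sE_x(r_m, U^2)\ge\lambda_{2, m}$. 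The main obstacle lies precisely in this final step: \cite{Berestycki-15} shows that second order elliptic operators generally admit uncountably many eigenvalues with positive eigenfunctions, and the role of both the lower bound $\inf V_m>0$ from the first part and the stochastic representation of Lemma~\ref{L3.3} is exactly to single out $\lambda_{2, m}$ as the principal one.
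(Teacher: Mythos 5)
Your proposal is correct and follows essentially the same route as the paper's proof: Lemma~\ref{L3.3}'s stochastic representation combined with the tail behavior of the perturbed cost $r_m$ yields $\inf_{\Rd}V_m>0$, the It\^o--Krylov/Fatou argument with this lower bound gives $\sup_{U^2}\sE_x(r_m,U^2)\le\lambda_{2,m}$, and the principal-eigenvalue characterization from \cite{ABS} together with Lemma~\ref{L2.1} gives the reverse inequality. The only (minor) deviation is that under (H1) you obtain $\lambda_{2,m}\le\norm{r}_\infty+\delta$ from the elementary chain $\lambda_{2,m}\le\sup_{U^2}\sE_x(r_m,U^2)\le\norm{r_m}_\infty$ rather than invoking the strict monotonicity result \cite[Theorem~3.3]{ABS} as the paper does; both give what is needed since only the nonnegativity of $r_m-\lambda_{2,m}$ outside a compact set is required.
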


\begin{proof}
Fix $m$. We claim that there exists a compact set $K$ such that $\min_{u_2\in\Act_2} r_m(x, u_2)-\lambda_{2,m}\geq 0$ for 
all $x\in K^c$. This obvious when $r_m$ is unbounded, as defined under condition (H2). Thus we prove the claim 
under condition (H1) where $r_m$ is bounded. Note that $r_m=\norm{r}_\infty + \delta$ for $x\in B^c_{m+1} $. Let
$v^*_{2, m}$ be a measurable selector of \eqref{E3.16}. Then by Lemma~\ref{L3.3}, $V_m$ has stochastic representation
with some compact ball $\sB_1$ and running cost $r_m$. From \cite[Corollary~2.3]{ABS} we know that only principal 
eigenvalue of the associated operator can have such representation. Therefore combing with Lemma~\ref{L2.1} we 
get $\lambda_{2, m}=\sE_x(r_m, v^*_{2, m})$ for all $x$. Again by strict monotonicity of principal eigenvalue 
\cite[Theorem~3.3]{ABS}
under condition (H1) we have $\lambda_{2, m}< \norm{r}_\infty + \delta$. Therefore choosing $K=\bar{B}_{m+1}$ we have 
our claim.

Without loss of generality we can choose $K$ large enough to contain $\sB$ where $\sB$ is given by Lemma~\ref{L3.3}.
Let $\uuptau_K$ be the hitting time to $K$. Hence by \eqref{EL3.3A}
$$V_m(x)\;=\; \Exp^{v^*_{2, m}}_x\left[\E^{\int_0^{\uuptau_K}[r_m(X_s,  v^*_2(X_s))-\lambda_{2, m}]\, \D{s}}
 V_m(X_{\uuptau_K})
\right]\;\geq\; \min_{K}\, V_m,\qquad\forall\, x\in K^c.$$
This proves that $\min_{\Rd} V_m=\min_{K} V_m >0$ (This is the key reason to perturb $r$ by $r_m$).
This proves first part of the lemma. To prove the second part
consider $U^2\in\Uadm_2$. Using \eqref{E3.16} and It\^{o}-Krylov  formula \cite[p. 122]{Krylov} we obtain
\begin{align*}
V_m(x) &\geq \; 
\Exp_x\left[ \E^{\int_0^{T}[r_m(X_s,  U^2(X_s))-\lambda_{2, m}]\, \D{s}}
 V_m(X_{T})\right]
 \\
 &\geq \; \min_{\Rd} V_m\; \Exp_x\left[ \E^{\int_0^{T}[r_m(X_s,  U^2(X_s))-\lambda_{2, m}]\, \D{s}}\right].
\end{align*}
Taking logarithm on both sides, dividing by $T$ and letting $T\to\infty$, we get
$$\sE_x(r_m, U^2)\;\leq\; \lambda_{2, m}.$$
$U^2$ in $\Uadm_2$ being arbitrary we have \eqref{EL3.4A}, using the above display and the fact $\lambda_{2, m}=\sE_x(r_m, v^*_{2, m})$.
\end{proof}

Let us fix $V_m(0)=1$. Recall that $v^*_{2, m}$ is a measurable selector of \eqref{E3.16} i.e.
\begin{equation}\label{E3.18}
\Lg_{v^*_{2, m}} V_m(x) + r_m(x, v^*_{2. m})V_m(x) \;=\; \lambda_{2, m}\, V_m(x), \quad \text{a.e. in}\; \Rd.
\end{equation}
Therefore applying Harnack's inequality and Sobolev estimate, as earlier, on \eqref{E3.18} it is easy to see that
the sequence $\{V_m \}$ is uniformly bounded in $\Sobl^{2, p}(\Rd), p>1$. Therefore we can extract a 
sub-sequence of $\{V_m \}$ that converges to some $V\in\Sobl^{2, p}(\Rd), p>1$, weakly in $\Sobl^{2, p}(\Rd),\, p>1,$
and strongly in $\Cc^{1, \alpha}_{\mathrm{loc}}(\Rd), \, \alpha\in(0, 1)$. On the other hand 
$\{\lambda_{2, m}\,:\, m\geq 1\}$ is also a bounded sequence. This follows from \eqref{EL3.4A} and by a similar reasoning as in Lemma~\ref{L3.2}. Let $(V, \lammax)$ be a sub-sequence
of $(V_m, \lambda_{2, m})$ as $m\to\infty$. Then following the arguments of Lemma~\ref{L3.2} we obtain
$$\max_{\tu_2\in\pAct_2}\Bigl(\Lg^{\tu_2} V(x) + r(x, \tu_2)V(x) \Bigr)\;=\; \lammax\, V(x), \quad \text{a.e. in}\; \Rd.$$
Again the arguments of Lemma~\ref{L3.3} shows that $V_m(x)\leq \kappa_2 (\Lyap(x))^{\hat\theta}, \, \hat\theta\in(0,1),$ uniformly in $m$ outside
a compact set (see \eqref{EL3.3FA}) and therefore, by an analogous calculation the limit $V$ also has stochastic representation. Summarizing we have the following lemma.

\begin{lemma}\label{L3.5}
Let the assumptions of Theorem~\ref{T3.1} hold.
There exists an eigenpair $(V, \lammax)\in\Sobl^{2, p}(\Rd)\times\RR, \, p\in(1, \infty),$ 
such that $V>0$, and
\begin{equation}\label{EL3.5A}
\max_{\tu_2\in\pAct_2}\Bigl(\tfrac{1}{2}\, a^{ij}(x)\,\partial_{ij} V (x)
+ b^{i}(x, \tu_2)\, \partial_{i} V (x)\, + r(x, \tu_2) V\Bigr)=\lammax\, V(x)\quad \text{a.e. in}\; \Rd.
\end{equation}
There exists a compact set $\sB$ such that for any measurable selector $v^*_2$ of
\eqref{EL3.5A} and any compact ball $\sB_1\supset \sB$ we have
\begin{equation}\label{EL3.5AA}
V(x)\;=\; \Exp^{v^*_2}_x\left[\E^{\int_0^{\uuptau_1}[r(X_s,  v^*_2(X_s))-\lammax]\, \D{s}} V(X_{\uuptau_1})
\right], \quad \text{for}\quad x\in\sB^c_1,
\end{equation}
where $\uuptau_1=\uptau(\sB^c_1)$. Moreover, for all $x\in\Rd$,
$$\lammax=\;\sup_{U^2\in\Uadm_2}\,\limsup_{T\to\infty}\, \frac{1}{T}\, \log \Exp_x\left[\E^{\int_0^T r(X_s, U^2_s)\, \D{s}}\right],$$
and any measurable selector of \eqref{EL3.5A} is an optimal stationary Markov control.
\end{lemma}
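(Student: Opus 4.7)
The plan is to implement the perturbation strategy outlined in the paragraph preceding the statement: apply Lemmas~\ref{L3.1}--\ref{L3.4} to the perturbed running cost $r_m$ and then take $m\to\infty$. For each $m$ the cost $r_m$ is locally finite, continuous in $\tu_2$, and satisfies either (H1) (with $\norm{r_m}_\infty+\delta<\gamma$ by the choice of $\delta$) or (H2) (with the same Lyapunov function $\Lyap$ and \eqref{cost} preserved for large $m$). Thus Lemma~\ref{L3.2} produces $(V_m,\lambda_{2,m})\in\Sobl^{2,p}(\Rd)\times\RR$ with $V_m>0$, $V_m(0)=1$ solving the perturbed HJB equation \eqref{E3.16}; Lemma~\ref{L3.3} delivers the stochastic representation \eqref{EL3.3A} for $V_m$; and Lemma~\ref{L3.4} supplies both $\min_{\Rd}V_m>0$ and the identity \eqref{EL3.4A}.

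Next I would extract limits. Choosing a measurable selector $v^*_{2,m}$ of \eqref{E3.16} gives the linear equation \eqref{E3.18}; combined with $V_m(0)=1$, Harnack's inequality and the interior Sobolev estimate yield a uniform $\Sob^{2,p}(K)$ bound on $\{V_m\}$ for every compact $K\subset\Rd$. Boundedness of $\{\lambda_{2,m}\}$ is inherited from \eqref{EL3.4A} through the same uniform upper bounds established in Lemma~\ref{L3.2}. A diagonal subsequence then produces $V\in\Sobl^{2,p}(\Rd)$ (weak $\Sobl^{2,p}$ and strong $\Cc^{1,\alpha}_{\mathrm{loc}}$ limit) and $\lammax\in\RR$, and the test-function argument of \eqref{EL3.2D}--\eqref{EL3.2E}---where strong $\Cc^{1}_{\mathrm{loc}}$ convergence handles the first-order maximum (continuous in $\tu_2$ over the compact set $\pAct_2$) while the principal part passes to the weak limit---yields \eqref{EL3.5A}. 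Harnack applied to \eqref{EL3.5A} promotes $V\ge 0$ to $V>0$ on $\Rd$.

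The central step is to transfer the stochastic representation to $V$. I would first verify that the bound $V_m(x)\le\kappa_2\Lyap(x)^{\hat\theta}$ of \eqref{EL3.3FA} is uniform in $m$: the parameter $\hat\theta\in(0,1)$ fixed by \eqref{EL3.3E}--\eqref{EL3.3F} depends only on $\lambda_{2,m}\le\norm{r}_\infty+\delta<\gamma$ under (H1), and on $\limsup_{\abs{x}\to\infty}\max_{u_2}r_m(\cdot,u_2)/\ell<1$ under (H2); the constant $\kappa_2$ comes from Harnack on the fixed compact set $\sB$ and is independent of $m$ by \eqref{E3.4}. Passing to the pointwise limit gives $V(x)\le\kappa_2\Lyap(x)^{\hat\theta}$ on $\sB^c$. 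Armed with this estimate I would repeat the It\^o--Krylov plus Fatou/truncation argument carried out after \eqref{EL3.3H} in the proof of Lemma~\ref{L3.3}, applied to $\Lg_{v^*_2}V+(r(\cdot,v^*_2)-\lammax)V=0$ for any measurable selector $v^*_2$ of \eqref{EL3.5A}: finiteness of $\Exp^{v^*_2}_x\bigl[\E^{\gamma\uuptau_1}\bigr]$ from \eqref{lyapunov} (or the corresponding moment built from \eqref{lyapunov1}) justifies $T\to\infty$, while the $\Gamma(n,m)$-truncation, fed by the uniform upper bound on $V$, annihilates the boundary term on $\{\uptau_n<\uuptau_1\}$ as first $n\to\infty$ and then the truncation level $m\to\infty$. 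This produces \eqref{EL3.5AA}.

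Finally, the value characterization and optimality follow by combining \eqref{EL3.5AA} with Section~2. By \cite[Corollary~2.3]{ABS}, the representation \eqref{EL3.5AA} identifies $\lammax$ as the principal eigenvalue of $\Lg_{v^*_2}+r(\cdot,v^*_2)$, so Lemma~\ref{L2.1} gives $\lammax=\sE_x(r,v^*_2)$ for every measurable selector $v^*_2$. For an arbitrary $U^2\in\Uadm_2$, applying It\^o--Krylov to \eqref{EL3.5A} as in the second half of Lemma~\ref{L3.4} and using any strictly positive lower bound of $V$ on a bounded set (available by continuity and positivity of $V$) produces $\sE_x(r,U^2)\le\lammax$, which completes both the sup-representation and the optimality of selectors. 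I expect the main obstacle to be precisely the uniformity in $m$ of the Lyapunov-type upper bound $V_m\le\kappa_2\Lyap^{\hat\theta}$, since the positive lower bound $\min_{\Rd}V_m>0$ from Lemma~\ref{L3.4} will in general degenerate as $m\to\infty$ and cannot be used to shortcut the representation; every control on the tails of $V$ must pass through this uniform upper estimate.
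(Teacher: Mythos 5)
Your proof follows the paper's approach for nearly every step: perturb to $r_m$, invoke Lemmas~\ref{L3.2}--\ref{L3.4} for $r_m$, obtain uniform Harnack/Sobolev estimates and the uniform upper bound $V_m\le\kappa_2\Lyap^{\hat\theta}$, pass to a subsequential limit $(V,\lammax)$, transfer the stochastic representation \eqref{EL3.5AA} via the same It\^o--Krylov plus truncation argument, and then deduce $\lammax=\sE_x(r,v^*_2)$ from \cite[Corollary~2.3]{ABS} and Lemma~\ref{L2.1}. All of that is sound.

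The one genuine gap is the final inequality $\sup_{U^2\in\Uadm_2}\sE_x(r,U^2)\le\lammax$. You propose to replicate the second half of Lemma~\ref{L3.4}'s proof, but using ``a strictly positive lower bound of $V$ on a bounded set.'' That cannot work: the argument there reads
\begin{equation*}
V_m(x)\;\ge\;\Exp_x\Bigl[\E^{\int_0^{T}[r_m(X_s,U^2_s)-\lambda_{2,m}]\,\D{s}}\,V_m(X_T)\Bigr]\;\ge\;\min_{\Rd}V_m\;\Exp_x\Bigl[\E^{\int_0^{T}[r_m(X_s,U^2_s)-\lambda_{2,m}]\,\D{s}}\Bigr],
\end{equation*}
and $X_T$ is not confined to any bounded set, so the second inequality requires a \emph{global} positive lower bound on the eigenfunction --- exactly the quantity you correctly note degenerates as $m\to\infty$. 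A local lower bound on $V$ does not yield the exponential moment bound, and no stopping/truncation patch repairs this without the global control you have already given up.

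The paper avoids this entirely by a simple monotonicity observation that your proposal omits: by construction $r_m\ge r$ pointwise on $\Rd\times\pAct_2$, so for every $U^2\in\Uadm_2$ one has $\sE_x(r,U^2)\le\sE_x(r_m,U^2)\le\lambda_{2,m}$ by \eqref{EL3.4A}, and letting $m\to\infty$ along the subsequence gives $\sE_x(r,U^2)\le\lammax$. No lower bound on $V$ is needed; the upper bound on the full family $\{V_m\}$ is used only for the stochastic representation of $V$, while this monotonicity step supplies the suboptimality estimate for arbitrary controls. You should replace your final paragraph with this argument.
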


\begin{proof}
Select $(V, \lammax)$ as a sub-sequential limit of $(V_m, \lambda_{2, m})$. Then from the discussion 
preceding Lemma~\ref{L3.5} we see that
\eqref{EL3.5A} and \eqref{EL3.5AA} hold. Also from the construction of $r_m$ we have $r_m\geq r$ for all $(x, \tu_2)\in\Rd\times\pAct$. Therefore
by \eqref{EL3.4A} we have 
\begin{equation}\label{EL3.5B}
\sup_{U^2\in\Uadm_2}\, \sE_x(r, U^2)\;\leq \; \lammax, \quad \text{for all}\; x\in\Rd.
\end{equation}
Let $v^*_2$ be a measurable selector of \eqref{EL3.5A} i.e., 
\begin{equation}\label{EL3.5C}
\Bigl(\tfrac{1}{2}\, a^{ij}(x)\,\partial_{ij} V (x)
+ b^{i}(x, v^*_2)\, \partial_{i} V (x)\, + r(x, v^*_2) V\Bigr)=\lammax\, V(x).
\end{equation}
By \cite[Corollary~2.3]{ABS} only principal eigenvalue of the elliptic operator in
\eqref{EL3.5C} can have such stochastic representation \eqref{EL3.5AA}. Combining this with Lemma~\ref{L2.1} we note that
$\lammax=\sE_x(r, v^*_2)$ for all $x$. Thus by \eqref{EL3.5B},
$$\sup_{U^2\in\Uadm_2}\, \sE_x(r, U^2)\;= \; \lammax, \quad \text{for all}\; x\in\Rd,$$
and $v^*_2$ is an optimal stationary Markov control.
\end{proof}

Let us now prove Theorem~\ref{T3.1}
\begin{proof}[Proof of Theorem~\ref{T3.1}]
(i) and (ii) follows from Lemma~\ref{L3.5}. So we need to prove (iii) and (iv). To prove (iii) we assume that 
there exists positive $\widehat{V}\in\Sobl^{2, p}(\Rd), \, p\in (1, \infty),$ that satisfies 
\begin{equation}\label{ET3.1B}
\max_{\tu_2\in\pAct_2}\Bigl(\tfrac{1}{2}\, a^{ij}(x)\,\partial_{ij} \widehat{V} (x)
+ b^{i}(x, \tu_2)\, \partial_{i} \widehat{V} (x)\, + r(x, \tu_2) \widehat{V}\Bigr)\;=\;\lammax\, 
\widehat{V}(x)\quad \text{a.e. in}\; \Rd.
\end{equation}
Let $v^*_2$ be a measurable selector of \eqref{EL3.5A}. Using \eqref{ET3.1B}
\begin{equation}\label{ET3.1C}
\tfrac{1}{2}\, a^{ij}(x)\,\partial_{ij} \widehat{V} (x)
+ b^{i}(x, v^*_2)\, \partial_{i} \widehat{V} (x)\, + r(x, v^*_2) \widehat{V}\;\leq\;\lammax\, 
\widehat{V}(x)\quad \text{a.e. in}\; \Rd.
\end{equation}
Applying It\^{o}-Krylov formula to \eqref{ET3.1C}, with the same compact ball $\sB_1$ as in \eqref{EL3.5AA}, we obtain,
for $x\in\sB^c_1$,
\begin{equation}\label{ET3.1D}
\widehat{V}(x)\;\geq \; \Exp^{v^*_2}_x\left[\E^{\int_0^{\uuptau_1}[r(X_s,  v^*_2(X_s))-\lammax]\, \D{s}} \widehat{V}(X_{\uuptau_1})
\right].
\end{equation}
Using \eqref{EL3.5AA} and \eqref{ET3.1D} one has
$$\widehat{V}(x)-V(x)\geq \;
\Exp^{v^*_2}_x\left[\E^{\int_0^{\uuptau_1}[r(X_s,  v^*_2(X_s))-\lammax]\, \D{s}} \bigl(\widehat{V}(X_{\uuptau_1})
-V(X_{\uuptau_1})\bigr)\right].$$
Therefore if we multiply $V$ by a suitable positive constant so that $\widehat{V}-V$ is non-negative in $\sB_1$ and has its
minimum $0$ in $\sB_1$, the above display indicates that $\widehat{V}-V\geq 0$ in $\Rd$ with its minimum in $\sB_1$.
On the other hand by \eqref{EL3.5C} and \eqref{ET3.1C} one has
\begin{align*}
& \tfrac{1}{2}\, a^{ij}(x)\,\partial_{ij} (\widehat{V}-V)
+ b^{i}(x, v^*_2)\, \partial_{i} (\widehat{V}-V) \, - (r(x, v^*_2)-\lammax)^- (\widehat{V}-V)
\\
&\quad \leq\; - (r(x, v^*_2)-\lammax)^+\, (\widehat{V}-V)
\\
&\quad \leq \; 0.
\end{align*}
Hence applying strong maximum principle \cite[Theorem~9.6]{GilTru} one has $V=\widehat{V}$. This proves uniqueness.

Let us now prove (iv). Suppose $\tilde{v}$ is an optimal stationary Markov control, i.e., $\sE_x(r, \tilde{v})=\lammax$.
By \cite[Theorem~3.2 and 3.3]{ABS} there exists positive $\tilde{V}\in\Sobl^{2, p}(\Rd), \, p>1,$ such that
\begin{equation}\label{ET3.1E}
\tfrac{1}{2}\, a^{ij}(x)\,\partial_{ij} \tilde{V} (x)
+ b^{i}(x, \tilde{v})\, \partial_{i} \tilde{V} (x)\, + r(x, \tilde{v}) \tilde{V}\;=\;\lammax\tilde{V},
\end{equation}
and $\lammax=\sE_x(r, \tilde{v})$ for all $x\in\Rd$. Moreover $\tilde{V}$ will have stochastic representation with
respect to some compact ball $\sB$.
Now \eqref{EL3.5A} gives
\begin{equation}\label{ET3.1F}
\tfrac{1}{2}\, a^{ij}(x)\,\partial_{ij} {V} (x)
+ b^{i}(x, \tilde{v})\, \partial_{i} {V} (x)\, + r(x, \tilde{v}) {V}\;\leq\;\lammax\, {V}.
\end{equation}
Applying It\^{o}-Krylov formula we also have
\begin{align*}
\tilde{V}(x) &= \; \Exp^{\tilde{v}}_x\left[\E^{\int_0^{\uuptau}[r(X_s,  \tilde{v}(X_s))-\lammax]\, \D{s}} \tilde{V}(X_{\uuptau})
\right],
\\
{V}(x) &\geq \; \Exp^{\tilde{v}}_x\left[\E^{\int_0^{\uuptau}[r(X_s,  \tilde{v}(X_s))-\lammax]\, \D{s}} {V}(X_{\uuptau})
\right].
\end{align*}
Therefore with the help of these stochastic representations and \eqref{ET3.1E}-\eqref{ET3.1F} we can follow the same
argument as above (for uniqueness) to conclude that $V=\tilde{V}$. Thus the inequality in \eqref{ET3.1F} is in fact
an equality and $\tilde{v}$ is a measurable selector of \eqref{EL3.5A}. Hence the proof.
\end{proof}

\section{\bf Proof of the Theorem~\ref{T2.1}}\label{S-proofs}

In this section we prove Theorem~\ref{T2.1}. The proof is divided into several lemmas. All the results in this section are
valid under any one of the Conditions \ref{C2.1} and \ref{C2.2}. 
Recall that $B_n$ denotes the open ball of radius $n$ around $0$.
By $\mathbb{S}$ we denote the set of all real symmetric matrices.
 Let 
$F:\mathbb{S}\times\Rd\times \RR\times\RR^d\to \RR$ be defined
as follows 
$$F(M, p, v, x)= \max_{\tu_2\in\pAct_2}\min_{\tu_1\in\pAct_1}\Big(\frac{1}{2}a^{ij}(x)M^{ij} + b(x, \tu_1, \tu_2)\cdot p
+ c(x, \tu_1, \tu_2)v\Big).$$
Note that $F$ is linear in $M$ when other variables are kept fixed. Then by \cite[Theorem~2.2]{armstrong}, \cite{Ishii-Yoshimura} there exists 
an (unique) eigenpair $(\psi_n, \lambda_n)\in\Cc^{1, \alpha}_{\mathrm{loc}}(B_n)\cap\Cc(\bar{B}_n)\times\RR$ with $\psi_n>0$, such that
\begin{equation}\label{E4.1}
F(D^2\psi_n, D\psi_n, \psi_n, x)= \lambda_n\psi_n, \quad\text{in} \; B_n, \quad \text{and}\; \psi_n=0\; \text{on}\; \partial B_n.
\end{equation}
The above should be understood in the sense of viscosity solution. From the structure of $F$ it easy to see that $\psi_n$
is a viscosity solution of 
$$\frac{1}{2}a^{ij}(x)\partial_{ij} \Psi = f(x),$$
where 
$$f(x)\;\df\; -\max_{\tu_2\in\pAct_2}\min_{\tu_1\in\pAct_1}\Big( b(x, \tu_1, \tu_2)\cdot \grad\psi_n(x)
+ c(x, \tu_1, \tu_2)\psi_n(x)\Big) + \lambda_n\psi_n(x),$$
 is  locally H\"{o}lder continuous in $B_n$. Therefore using (A1), (A3) and \cite[Theorem~3]{Caffarelli} we get 
$\psi_n\in\Cc^{2, \alpha}_{\mathrm{loc}}(B_n)$ for some $\alpha>0$. We start with the following lemma.

\begin{lemma}\label{L4.1}
Suppose that either Condition~\ref{C2.1} or Condition~\ref{C2.2} holds.
For every $n\in\NN$ there exists an eigenpair $(\psi_n, \lambda_n)\in\Cc^{2}(B_n)\cap\Cc(\bar{B}_n)\times\RR$
with $\psi_n>0$ satisfying \eqref{E4.1}. Moreover, the set $\{\lambda_n\; \colon\; n\in\NN\}$ is bounded.
\end{lemma}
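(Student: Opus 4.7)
My proof plan. The existence of $(\psi_n,\lambda_n)$ with $\psi_n\in\Cc^{2,\alpha}_{\mathrm{loc}}(B_n)\cap\Cc(\bar{B}_n)$, $\psi_n>0$ and $\lambda_n\in\RR$ has already been outlined in the paragraph preceding the statement via \cite{armstrong, Ishii-Yoshimura, Caffarelli}, so the only remaining task is to establish uniform boundedness of $\{\lambda_n:n\in\NN\}$. I would prove an upper and a lower bound separately, both in the style of Lemma~\ref{L3.2}.

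For the upper bound, I fix an arbitrary $u_1^0\in\pAct_1$ and extract, using \cite[Ch.~18]{Ali-Bor}, a Borel measurable outer maximising selector $v^*_{2,n}\colon B_n\to\pAct_2$ for \eqref{E4.1}. From \eqref{E4.1} and the defining property of $v^*_{2,n}$ I obtain the pointwise supersolution inequality
\begin{equation*}
\Lg^{u_1^0,v^*_{2,n}}\psi_n+c(\cdot,u_1^0,v^*_{2,n})\psi_n\;\geq\;\lambda_n\psi_n\qquad\text{in } B_n.
\end{equation*}
Extending $v^*_{2,n}$ to $\Rd$ by any constant, applying the It\^{o}--Krylov formula at the stopping time $T\wedge\uptau_n$ and using $\psi_n=0$ on $\partial B_n$, I get
\begin{equation*}
\psi_n(x)\;\leq\;\norm{\psi_n}_{\infty}\,\Exp^{u_1^0,v^*_{2,n}}_x\!\Bigl[\E^{\int_0^T(c(X_s,u_1^0,v^*_{2,n}(X_s))-\lambda_n)\,\D s}\Bigr].
\end{equation*}
Taking logarithm, dividing by $T$ and letting $T\to\infty$ yields $\lambda_n\leq\sE_x(c,u_1^0,v^*_{2,n})$. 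Under Condition~\ref{C2.1} the right-hand side is bounded by $\norm{c}_\infty<\gamma$; under Condition~\ref{C2.2} the Lyapunov inequality \eqref{lyapunov1} combined with $c\in\sC_\ell$ gives a bound independent of the control exactly as in Lemma~\ref{L3.2} (via $\sE_x(\ell,\cdot)\leq\beta/\min_{\sK}\Lyap$ and $\max c\leq\kappa+\ell$).

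For the lower bound I would compare with the zeroth-order-free Isaacs operator
\begin{equation*}
\tilde F(M,p,x)\;\df\;\max_{\tu_2\in\pAct_2}\min_{\tu_1\in\pAct_1}\!\Bigl[\tfrac12 a^{ij}(x)M^{ij}+b(x,\tu_1,\tu_2)\!\cdot\!p\Bigr].
\end{equation*}
Since $c\geq 0$ and $\psi_n\geq 0$ make $c(x,\tu_1,\tu_2)\psi_n(x)\geq 0$ for every $(\tu_1,\tu_2)$, monotonicity of max-min yields $\tilde F(D^2\psi_n,D\psi_n,x)\leq F(D^2\psi_n,D\psi_n,\psi_n,x)=\lambda_n\psi_n$, so $\psi_n$ is a positive classical subsolution of $\tilde F(D^2\cdot,D\cdot,x)=\lambda_n\cdot$ on $B_n$ with $\psi_n|_{\partial B_n}=0$. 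Letting $\tilde\lambda_n$ denote the principal Dirichlet eigenvalue of $\tilde F$ on $B_n$ (well-defined by \cite{armstrong}), the standard characterisation of the principal eigenvalue gives $\lambda_n\geq\tilde\lambda_n$. Domain monotonicity of Dirichlet principal eigenvalues for an elliptic operator without zeroth-order term then implies $\tilde\lambda_n$ is non-decreasing in $n$, hence bounded below by $\tilde\lambda_1>-\infty$. Combining the two estimates gives the claimed uniform boundedness.

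The main obstacle I anticipate is the lower bound: applying the principal-eigenvalue framework of \cite{armstrong} to the auxiliary operator $\tilde F$ and rigorously deducing both the subsolution-versus-principal-eigenvalue comparison and the domain-monotonicity $\tilde\lambda_n\leq\tilde\lambda_{n+1}$ for a fully nonlinear max-min operator is the most delicate part. The upper bound is a fairly direct transcription of the argument already used in Lemma~\ref{L3.2}; the only care needed is the extension of the Markov selector $v^*_{2,n}$ to all of $\Rd$ in order to apply It\^{o}--Krylov.
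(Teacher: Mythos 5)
Your upper bound argument is essentially the paper's: you freeze the outer max selector, use the It\^o--Krylov formula and the vanishing boundary condition to obtain $\lambda_n\leq\sE_x(c,u^0_1,v^*_{2,n})$, and then bound the right side via Condition~\ref{C2.1} or \ref{C2.2}. The only cosmetic difference is that you fix an arbitrary $u_1^0\in\pAct_1$ and get a supersolution \emph{inequality}, whereas the paper uses both saddle-point selectors $(v^n_1,v^n_2)$ (justified via \cite[Theorem~3]{Fan}) to get an equality; either version of the It\^o step works.

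The lower bound is where you genuinely diverge from the paper, and both routes are valid. The paper linearizes: with the selectors $(v^n_1,v^n_2)$ plugged in, $(\psi_n,\lambda_n)$ becomes the Dirichlet principal eigenpair of the linear operator $L^n$ on $B_n$; linear domain monotonicity (Berestycki--Rossi) gives $\lambda_n\geq\hat\lambda^n$, the principal eigenvalue of $L^n$ on the \emph{fixed} ball $B_1$; and crucially, since $L^n$ changes with $n$, the paper then invokes \cite[Proposition~4.1]{Quaas-08a} to get a lower bound on $\hat\lambda^n$ depending only on the ellipticity constants and uniform coefficient bounds on $B_1$ --- hence independent of $n$. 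You instead stay fully nonlinear, compare $F$ with the $n$-independent operator $\tilde F$ via $c\psi_n\geq0$, invoke the supersolution characterization of the principal eigenvalue to get $\lambda_n\geq\tilde\lambda_1^+(\tilde F,B_n)$, and then use domain monotonicity for the \emph{fully nonlinear} $\tilde F$ to bound this below by $\tilde\lambda_1^+(\tilde F,B_1)$. This avoids the uniform-over-coefficients step entirely (the operator you compare against is the same for all $n$), which is genuinely cleaner; the price is that you need the Berestycki--Nirenberg--Varadhan-type characterization and domain monotonicity in the fully nonlinear (Isaacs) setting, which is in \cite{armstrong} and Quaas--Sirakov but should be cited precisely. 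Note you could simplify further: $\lambda_n$ is itself the principal Dirichlet eigenvalue of the $n$-independent Isaacs operator $F$ on $B_n$, so domain monotonicity of the nonlinear eigenvalue alone already gives $\lambda_n\geq\lambda_1>-\infty$; dropping the zeroth-order term buys nothing (domain monotonicity is insensitive to it, and $c$ is locally bounded on each $B_n$ anyway). A minor terminological slip: $\psi_n$ satisfying $\tilde F(D^2\psi_n,D\psi_n,x)-\lambda_n\psi_n\leq 0$ is, in the standard viscosity convention, a \emph{super}solution of $\tilde F-\lambda_n\cdot=0$, not a subsolution; this does not affect the correctness of the comparison $\lambda_n\geq\tilde\lambda_1^+$.
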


\begin{proof}
In view of the discussion above we only need to show that $\{\lambda_n\; \colon\; n\in\NN\}$ is a bounded set.
Note that by \cite[Theorem 3]{Fan}
\begin{align*}
&\max_{\tu_2\in\pAct_2}\min_{\tu_1\in\pAct_1}\Big(\frac{1}{2}a^{ij}(x)\partial_{ij}\psi_n(x) + b(x, \tu_1, \tu_2)\cdot \grad\psi_n(x)
+ c(x, \tu_1, \tu_2)\psi_n(x)\Big)
\\
&\quad =\min_{\tu_1\in\pAct_1}\max_{\tu_2\in\pAct_2}
\Big(\frac{1}{2}a^{ij}(x)\partial_{ij}\psi_n(x) + b(x, \tu_1, \tu_2)\cdot\grad\psi_n(x)
+ c(x, \tu_1, \tu_2)\psi_n(x)\Big).
\end{align*}
Let $v^n_2$ be a outer maximizing selector of the LHS above and $v^n_1$ be a outer minimizing selector of RHS above.
It is easy to check that 
\begin{equation}\label{EL4.1A}
\frac{1}{2}a^{ij}(x)\partial_{ij}\psi_n(x) + b(x, v^n_1, v^n_2)\cdot \grad\psi_n(x)
+ c(x, v^n_1, v^n_2)\psi_n(x)=\lambda_n\psi_n(x).
\end{equation}
Extend the Markov controls in $\Rd$ by setting $v^n_1(x)=u_1$ and $v^n_2(x)=u_2$ for all $x\in B^c_n$ where 
$(u_1, u_2)\in\pAct_1\times\pAct_2$ is fixed. Let $\uptau_n=\uptau(B_n)$. Then appying It\^{o}'s formula to \eqref{EL4.1A}
we have, for $x\in B_n$ and $v^n=(v^n_1, v^n_2)$, 
\begin{align*}
\psi_n(x) &= \Exp^v_x\left[\E^{\int_0^{T}[c(X_s, v^n_1(X_s), v^n_2(X_s)) -\lambda_n]\, \D{s}} \psi_n(X_T)\Ind_{\{T<\uptau_n\}}\right]
\\
&\le\; \norm{\psi_n}_\infty \,
\Exp^v_x\left[\E^{\int_0^{T}[c(X_s, v^n_1(X_s), v^n_2(X_s)) -\lambda_n]\, \D{s}} \right].
\end{align*}
Therefore taking logarithm on both sides, diving by $T$ and letting $T\to\infty$, we get
\begin{equation}\label{EL4.1B}
\lambda_n\leq \sE_x(v^n_1, v^n_2).
\end{equation}
Under Condition~\ref{C2.1} we have $c$ bounded and thus 
$$\sup_{(U^1, U^2)\in\Uadm_1\times\Uadm_2}\sE_x(U^1, U^2)\leq \norm{c}_\infty<\infty.$$
Suppose Condition~\ref{C2.2} holds. It is easy to see from \eqref{lyapunov1} that
$$\sup_{(U^1, U^2)\in\Uadm_1\times\Uadm_2}\sE_x(\ell, U^1, U^2)\leq \tfrac{\beta}{\min_{\sK}\Lyap}.$$
Since $c\in\sC_\ell$,   $\max_{u\in\Act_1\times\Act_2}c(\cdot, u)\leq \kappa + \ell(\cdot)$ for  some positive $\kappa$.
Hence
$$\sup_{(U^1, U^2)\in\Uadm_1\times\Uadm_2}\sE_x(c, U^1, U^2)\leq \kappa + \tfrac{\beta}{\min_{\sK}\Lyap}.$$
Combining these with \eqref{EL4.1B} we have
\begin{equation}\label{EL4.1C}
\lambda_n\leq \kappa_1, \quad \text{for all}\; n\geq 1,\; \text{and for some constant }\; \kappa_1.
\end{equation}
Thus it remains to show that the set $\{\lambda_n\; \colon\; n\in\NN\}$ is also bounded from below. We define
the elliptic operator $L^n$ as
$$L^n\varphi=\frac{1}{2}a^{ij}(x)\partial_{ij}\varphi(x) + b(x, v^n_1, v^n_2)\cdot \grad\varphi(x)
+ c(x, v^n_1, v^n_2)\varphi(x).$$
Then from \eqref{EL4.1A} we see that $(\psi_n, \lambda_n)$ is the principal eigenpair
of $L^n$ for the Dirichlet problem in $B_n$. If $\hat\lambda^n$ denote the principal eigenvalue of $L^n$ in $B_1$
then by monotonicity property of principal eigenvalues (with respect to domains ordered with respect to set inclusion, see for instance \cite{Berestycki-15})
 we know that $\lambda_n\geq \hat\lambda^n$. On the other
hand by \cite[Proposition~4.1]{Quaas-08a} there exists a constant $\kappa_0$, independent of $n$, such that
$\hat\lambda^n\geq \kappa_0$. Thus $\lambda_n\geq \kappa_0$ for all $n$. This completes the proof combining with \eqref{EL4.1C}.
\end{proof}

Set $\psi_n(0)=1$. Then by Harnack's inequality \cite[Corollary~9.25]{GilTru} and \eqref{EL4.1A} we have for any compact set $K\subset B_n$, $\{\psi_n\,:\, n\geq 1\}$ uniformly bounded in $\Sob^{2, p}(K), p>d$ (see \eqref{E3.4}). By a
standard diagonalization argument we can extract a subsequence of $\{\psi_n\,:\, n\geq 1\}$ that converges to some 
$V\in\Sobl^{2, p}(\Rd),\, p>1$, strongly in $\Cc^{1, \alpha}_{\mathrm{loc}}(\Rd),\, \alpha\in(0, 1),$ and weakly in $\Sobl^{2, p}(\Rd),\, p>1$.

\begin{lemma}\label{L4.2}
If $(V, \Lambda)$ is any sub-sequential limit of $(\psi_n, \lambda_n)$, as obtained above, then we have 
$V\in\Cc^2(\Rd)$ and 
\begin{equation}\label{EL4.2A}
\max_{\tu_2\in\pAct_2}\min_{\tu_1\in\pAct_1}\Big(\frac{1}{2}a^{ij}(x)\partial_{ij} V(x) + b(x, \tu_1, \tu_2)\cdot \grad V(x)
+ c(x, \tu_1, \tu_2) V(x)\Big)=\Lambda\, V(x), \quad V>0.
\end{equation}
Moreover, 
\begin{equation}\label{EL4.2B}
\Lambda\;\leq\; \underline{\Lambda} =\sup_{U^2\in\Uadm_2}\sJmin(x, U^2)= \sup_{U^2\in\Uadm_2}\inf_{U^1\in\Uadm_1}\sE_x(U^1, U^2).
\end{equation}
\end{lemma}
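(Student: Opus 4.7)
The plan is to first establish the PDE, regularity and positivity claims by a standard limiting procedure, and then prove $\Lambda\leq\underline\Lambda$ via a Feynman--Kac super-solution argument along an outer maximizing selector of \eqref{EL4.2A}. Since $\{\psi_n\}$ is locally uniformly bounded in $\Sob^{2,p}(K)$ for every compact $K\subset\Rd$ (by Harnack applied to \eqref{EL4.1A} and a standard Sobolev estimate, just as in \eqref{E3.4}), along a subsequence $\psi_n\to V$ strongly in $\Cc^{1,\alpha}_{\mathrm{loc}}(\Rd)$ and weakly in $\Sobl^{2,p}(\Rd)$. Exactly as in \eqref{EL3.2D}, the max-min expression in \eqref{E4.1} depends continuously on $(\psi_n,\grad\psi_n)$ with a uniform modulus on each compact set; testing \eqref{E4.1} against smooth compactly supported functions and passing to the limit then yields \eqref{EL4.2A} in the a.e.\ sense. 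Plugging any inner min/max selector into \eqref{EL4.2A} reduces it to a linear elliptic equation with locally H\"older continuous data, so \cite[Theorem~3]{Caffarelli} combined with (A1)--(A3) gives $V\in\Cc^2(\Rd)$; Harnack's inequality for that linear equation, together with $V(0)=\lim_n\psi_n(0)=1$, yields $V>0$.

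For the inequality $\Lambda\leq\underline\Lambda$, I would pick a measurable outer maximizing selector $v_2^*\in\Usm_2$ of \eqref{EL4.2A} (existence by \cite[Theorem~18.13]{Ali-Bor}). By the defining property of such a selector, for every $\tu_1\in\pAct_1$ and a.e.\ $x\in\Rd$,
$$\tfrac{1}{2}a^{ij}(x)\partial_{ij}V(x) + b(x,\tu_1,v_2^*(x))\cdot\grad V(x) + c(x,\tu_1,v_2^*(x))V(x)\;\geq\;\Lambda\, V(x).$$
Fix $U^1\in\Uadm_1$ and apply It\^o--Krylov to $t\mapsto\E^{\int_0^t(c(X_s,U^1_s,v_2^*(X_s))-\Lambda)\,\D s}V(X_t)$ along the diffusion driven by $(U^1,v_2^*)$ and stopped at $T\wedge\uptau_n$; non-negativity of the resulting drift term gives
$$V(x)\;\leq\;\Exp_x^{(U^1,v_2^*)}\Bigl[\E^{\int_0^{T\wedge\uptau_n}(c(X_s,U^1_s,v_2^*(X_s))-\Lambda)\,\D s}\,V(X_{T\wedge\uptau_n})\Bigr].$$

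Passing $n\to\infty$ requires a uniform growth bound on $V$. For this I would replicate the argument of Lemma~\ref{L3.3} at the level of the Dirichlet eigenpair $(\psi_n,\lambda_n)$: first argue $\Lambda\geq 0$ as at the opening of Lemma~\ref{L3.3}, and then use \eqref{E2.5} under Condition~\ref{C2.1} (respectively $c\in\sC_\ell$ under Condition~\ref{C2.2}) to find $\hat\theta\in(0,1)$ and a compact $\sB$ so that $\max_u c(\cdot,u)-\Lambda<\hat\theta\gamma$ on $\sB^c$ (respectively $<\hat\theta\ell$); the chain of estimates in \eqref{EL3.3FA} applied to the joint selector from \eqref{EL4.1A} then yields $\psi_n\leq\kappa\Lyap^{\hat\theta}$ uniformly in $n$ on $\sB^c$, whence $V\leq\kappa\Lyap^{\hat\theta}$. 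The Foster--Lyapunov estimate $\sup_n\Exp_x^{(U^1,v_2^*)}[\Lyap^{\hat\theta(1+\varepsilon)}(X_{T\wedge\uptau_n})]<\infty$ (for $\varepsilon$ small, obtained by applying It\^o to $\Lyap^{\hat\theta(1+\varepsilon)}$ and invoking \eqref{lyapunov} or \eqref{lyapunov1}) yields uniform integrability of the family under the expectation, so we obtain $V(x)\leq\Exp_x^{(U^1,v_2^*)}[\E^{\int_0^T(c-\Lambda)\,\D s}V(X_T)]$ after letting $n\to\infty$.

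The main obstacle is extracting $\Lambda\leq\sE_x(U^1,v_2^*)$ from this bound when $V$ is unbounded and $U^1$ is a general admissible (possibly non-Markov) control. H\"older's inequality with exponent $p>1/(1-\hat\theta)$ decouples $V(X_T)$ from the risk-sensitive factor (with $\Exp[V^{p/(p-1)}(X_T)]$ controlled by the same Foster--Lyapunov bound), producing $\Lambda\leq\tfrac{1}{p}\sE_x(pc,U^1,v_2^*)$; letting $p\downarrow 1$ then gives $\Lambda\leq\sE_x(U^1,v_2^*)$ provided one can justify the continuity of $p\mapsto\tfrac{1}{p}\sE_x(pc,U^1,v_2^*)$ at $p=1$. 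For stationary Markov $U^1$ this continuity follows from the strict monotonicity of the principal eigenvalue in the cost \cite[Theorem~3.3]{ABS} combined with the identification in Lemma~\ref{L2.1}; for general $U^1\in\Uadm_1$ the cleanest route is to reduce to the stationary Markov case by a Krylov-type Markov-selection argument, noting that the infimum defining $\sJmin(x,v_2^*)$ may be realized over $\Usm_1$ under our hypotheses. Taking the infimum over $U^1\in\Uadm_1$ and using $\underline\Lambda\geq\sJmin(x,v_2^*)$ then completes the argument.
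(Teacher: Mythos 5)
The first half of your proposal (the PDE, regularity, and positivity) matches the paper and is fine. The trouble is in your argument for $\Lambda\le\underline\Lambda$: you diverge from the paper's route and the route you choose has a gap that does not close.

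The paper establishes $\Lambda\le\underline\Lambda$ by working entirely at level $n$, \emph{before} passing to the limit. Using the outer maximizing selector $v_2^n$ of \eqref{EL4.1A} and any $U^1\in\Uadm_1$, one applies It\^o's formula to $\psi_n$ on the bounded domain $B_n$; since $\psi_n\in\Cc(\bar B_n)$ vanishes on $\partial B_n$, $\psi_n$ is bounded and the stopped estimate gives $\lambda_n\le\sE_x(U^1,v_2^n)$ for every $U^1$ directly, hence $\lambda_n\le\sJmin(x,v_2^n)\le\underline\Lambda$, and then $n\to\infty$. No growth bound on $V$, no uniform integrability, no H\"older decoupling, and no Markov-selection reduction is needed.

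You instead pass to the limit $V$ first, fix $v_2^*$ from the \emph{limiting} equation, and then have to control $\Exp_x[\E^{\int_0^T(c-\Lambda)}V(X_T)]$ for an unbounded $V$. Two steps in your plan to do this do not survive scrutiny. First, the H\"older decoupling: your exponent must satisfy $p>1/(1-\hat\theta)$ precisely so that the conjugate $q=p/(p-1)$ gives $\hat\theta q<1$ and the Lyapunov bound applies to $\Lyap^{\hat\theta q}$. But $1/(1-\hat\theta)>1$ is a hard floor that $p$ cannot cross, so there is no way to let $p\downarrow 1$; $\hat\theta$ is fixed by the data (under Condition~\ref{C2.1} it can be close to $1$ when $\norm{c}_\infty$ is close to $\gamma$), so this floor cannot be pushed down either. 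At best you obtain $\Lambda\le\tfrac1p\sE_x(pc,U^1,v_2^*)$ for $p$ bounded away from $1$, which does not give the claim. Second, even granting the decoupling, the continuity of $p\mapsto\tfrac1p\sE_x(pc,U^1,v_2^*)$ at $p=1$ for an arbitrary non-Markov $U^1$, and the ``Krylov-type Markov-selection'' reduction you invoke to realize $\sJmin(x,v_2^*)$ over $\Usm_1$, are each substantial unproved claims roughly at the level of difficulty of what you are trying to establish. The fix is simply to do the verification step with $\psi_n$ and $v_2^n$ as the paper does; that eliminates all of these obstructions at once.
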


\begin{proof}
Since $\psi_n$ converges to $V$ along some subsequence, strongly in $\Cc^{1, \alpha}_{\mathrm{loc}}(\Rd),\, \alpha\in(0, 1),$ and weakly in $\Sobl^{2, p}(\Rd),\, p>1$, we can pass limit in \eqref{E4.1} to obtain \eqref{EL4.2A}. Regularity of $V$ can be
improved to $\Cc^2(\Rd)$ using ellipticity of $a$, (A1) and standard elliptic regularity estimates.
 Now fix $x\in\Rd$ and choose $n$ large
enough so that $x\in B_n$. Recall the outer maximizing selector $v^n_2$ from Lemma~\ref{L4.1}. Then for any $U^1\in\Uadm_1$ we have
$$ \Big(\frac{1}{2}a^{ij}(X_s)\partial_{ij} \psi_n(X_s) + b(X_s, U^1_s, v^n_2(X_s))\cdot \grad\psi_n(X_s)
+ c(X_s, U^1_s, v^n_2(X_s)) \psi_n(X_s)\Big)\;\geq\; \lambda_n\, \psi_n(X_s), $$
almost surely for $s<\uptau_n$.
Thus following a similar calculation as in Lemma~\ref{L4.1} (see \eqref{EL4.1B}) we obtain
$$\lambda_n\leq \sE_x(v^n_2, U^1).$$
Since $U^1$ has been chosen arbitrarily we get $\lambda_n\leq \inf_{U^1\in\Uadm_1}\sE_x(v^n_2, U^1)=
\sJmin(x, v^n_2)\leq \underline{\Lambda}$. Now let $n\to\infty$ to obtain \eqref{EL4.2B}.
\end{proof}

Following result shows that $\Lambda$, obtained in Lemma~\ref{L4.2}, is actually value of the game.

\begin{lemma}\label{L4.3}
Suppose that either Condition~\ref{C2.1} or Condition~\ref{C2.2} holds. Let $(V, \Lambda)\in\Cc^2(\Rd)\times\RR$ be 
the eigenpair obtained in Lemma~\ref{L4.2}. Then we have 
\begin{equation}\label{EL4.3A}
\bar{\Lambda} = \inf_{U^1\in\Uadm_1}\sJmax(x, U^1)= \inf_{U^1\in\Uadm_1}\sup_{U^2\in\Uadm_2}\sE_x(U^1, U^2)
\;\leq\; \Lambda.
\end{equation}
Thus we have $\bar\Lambda=\Lambda=\underline{\Lambda}$ for all $x\in\Rd$.
\end{lemma}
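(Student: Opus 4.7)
The plan is to prove \eqref{EL4.3A} by exhibiting a stationary Markov control $v^*_1$ for the minimizer whose worst-case cost against an adversary is bounded by $\Lambda$. Combined with Lemma~\ref{L4.2} (which gives $\Lambda\le\underline{\Lambda}$) and the trivial $\underline{\Lambda}\le\bar{\Lambda}$, this collapses all three quantities to $\Lambda$. The trick is to freeze player 1 at $v^*_1$ and reduce what remains to a pure maximization problem to which Theorem~\ref{T3.1} applies.

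First, by \cite[Theorem~3]{Fan} the HJI equation \eqref{EL4.2A} may be rewritten in its min-max form, and by the measurable-selection results in \cite[Chapter~18]{Ali-Bor} there exists an outer minimizing selector $v^*_1\in\Usm_1$ of
\begin{equation*}
\min_{\tu_1\in\pAct_1}\max_{\tu_2\in\pAct_2}\Big(\tfrac{1}{2}a^{ij}\partial_{ij}V+b(x,\tu_1,\tu_2)\cdot\grad V+c(x,\tu_1,\tu_2)V\Big)=\Lambda V,
\end{equation*}
so that $V$ satisfies
\begin{equation*}
\max_{\tu_2\in\pAct_2}\Big(\tfrac{1}{2}a^{ij}\partial_{ij}V+b(x,v^*_1(x),\tu_2)\cdot\grad V+c(x,v^*_1(x),\tu_2)V\Big)=\Lambda V
\quad\text{a.e. in }\Rd.
\end{equation*}
I now view this display as the eigenvalue equation of Theorem~\ref{T3.1} with closed-loop drift $\tilde b(x,\tu_2)\df b(x,v^*_1(x),\tu_2)$ and cost $\tilde r(x,\tu_2)\df c(x,v^*_1(x),\tu_2)$. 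The coefficient $\tilde b$ is Borel in $x$, continuous in $\tu_2$, and satisfies (A2)--(A3); since the hypotheses of Theorem~\ref{T3.1} impose Lipschitz regularity only on $\upsigma$, the merely Borel selector $v^*_1$ causes no trouble. Under Condition~\ref{C2.1} hypothesis (H1) transfers, since
\begin{equation*}
\sup_{\tu_2\in\pAct_2}\bigl(\tfrac{1}{2}a^{ij}\partial_{ij}\Lyap+\tilde b(x,\tu_2)\cdot\grad\Lyap\bigr)\;\le\;\max_{u\in\Act_1\times\Act_2}\Lg^u\Lyap\;\le\;\beta\Ind_{\sK}-\gamma\Lyap,
\end{equation*}
and $\norm{\tilde r}_\infty\le\norm{c}_\infty<\gamma$. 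Under Condition~\ref{C2.2} hypothesis (H2) transfers analogously since $\max_{\tu_2}\tilde r(x,\tu_2)\le\max_{u\in\Act_1\times\Act_2}c(x,u)$ retains the tail bound \eqref{cost} with the same inf-compact $\ell$.

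Next, the uniqueness clause Theorem~\ref{T3.1}(iii) (applied after normalizing $V(0)=1$, as in Lemma~\ref{L4.2}) identifies $V$ with the unique positive eigenfunction of the closed-loop max equation, and $\Lambda$ with its eigenvalue. Theorem~\ref{T3.1}(i) then gives
\begin{equation*}
\Lambda\;=\;\sup_{U^2\in\Uadm_2}\limsup_{T\to\infty}\tfrac{1}{T}\log\Exp_x\Bigl[\E^{\int_0^T\tilde r(X_s,U^2_s)\,\D s}\Bigr]\;=\;\sup_{U^2\in\Uadm_2}\sE_x(v^*_1,U^2)\;=\;\sJmax(x,v^*_1),
\end{equation*}
where the middle identity uses that, for $(v^*_1,U^2)\in\Usm_1\times\Uadm_2$, the dynamics $\D X=\tilde b(X,U^2)\D t+\upsigma(X)\D W$ have the same law as the original game SDE \eqref{E2.1} driven by this policy pair. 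Consequently $\bar{\Lambda}=\inf_{U^1\in\Uadm_1}\sJmax(x,U^1)\le\sJmax(x,v^*_1)=\Lambda$, proving \eqref{EL4.3A}. Combining with Lemma~\ref{L4.2} yields $\bar{\Lambda}=\Lambda=\underline{\Lambda}$, independent of $x$.

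The main obstacle is the legitimacy of freezing player 1 at the only-Borel selector $v^*_1$ and invoking Theorem~\ref{T3.1} for the resulting closed-loop problem. Two points deserve care: that Theorem~\ref{T3.1} is stated without any Lipschitz requirement on the drift, so loss of regularity through $v^*_1$ is harmless; and that the admissible-control class for the auxiliary maximization matches $\Uadm_2$ of the original game, so that the resulting supremum over $U^2$ is genuinely $\sJmax(x,v^*_1)$ rather than a supremum over a smaller class. Once these identifications are made, the remainder is a direct application of the machinery developed in Section~\ref{S-max}.
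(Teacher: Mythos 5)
Your overall strategy---freeze player 1 at an outer minimizing selector $v^*_1$, reduce to the closed-loop maximization problem, and invoke Theorem~\ref{T3.1}---is exactly the paper's approach, and the preliminary steps (Fan's minimax theorem, the measurable selection, the transfer of (H1)/(H2) to the closed-loop drift and cost, the indifference of Theorem~\ref{T3.1} to the merely Borel regularity of $v^*_1$) are all fine. But the pivotal step, ``the uniqueness clause Theorem~\ref{T3.1}(iii) identifies $V$ with the unique positive eigenfunction of the closed-loop max equation, and $\Lambda$ with its eigenvalue,'' has a genuine gap. Theorem~\ref{T3.1}(iii) asserts uniqueness of the eigenfunction \emph{for the equation at the specific level} $\lammax$, that is, among $\widehat V>0$ with $\widehat V(0)=1$ satisfying \eqref{ET3.1}. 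It says nothing about uniqueness of the eigenvalue: the fact that $V$ satisfies the closed-loop max equation with some level $\Lambda$ does not force $\Lambda=\lammax$. Indeed the paper's introduction emphasizes, citing Berestycki--Rossi, that these operators can admit uncountably many eigenvalues with positive eigenfunctions, and that the whole difficulty is to isolate the \emph{principal} one. So you cannot get $\Lambda=\lammax$ for free from (iii); you must argue it.

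The paper does exactly this, but only needs the one-sided inequality $\lammax\le\Lambda$, which follows from the infimum characterization of the principal eigenvalue \eqref{P-eigen} rather than from uniqueness. Concretely: take an optimal stationary Markov selector $w^*_2$ for the closed-loop maximization (supplied by Theorem~\ref{T3.1}(ii)). Since $v^*_1$ is an outer minimizer of \eqref{EL4.2A}, one has
\begin{equation*}
\tfrac{1}{2}a^{ij}\partial_{ij}V + b(x,v^*_1,w^*_2)\cdot\grad V + c(x,v^*_1,w^*_2)V \;\le\; \max_{\tu_2\in\pAct_2}\bigl(\cdots\bigr)=\Lambda V,
\end{equation*}
so $V>0$ is a supersolution at level $\Lambda$ of the linear operator $\Lg_{(v^*_1,w^*_2)}+c(\cdot,v^*_1,w^*_2)$. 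By \eqref{P-eigen} the principal eigenvalue of this linear operator is $\le\Lambda$, and by Lemma~\ref{L2.1} that principal eigenvalue equals $\sE_x(v^*_1,w^*_2)=\lammax$. Hence $\lammax\le\Lambda$, and then $\bar\Lambda\le\sJmax(x,v^*_1)=\lammax\le\Lambda$. Combining with Lemma~\ref{L4.2} and $\underline\Lambda\le\bar\Lambda$ closes the loop. To repair your write-up, replace the appeal to Theorem~\ref{T3.1}(iii) with this supersolution/principal-eigenvalue comparison (equivalently, observe that equality $\Lambda=\lammax$ only drops out a posteriori once the full chain of inequalities is assembled, and cannot be used as an input).
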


\begin{proof}
Fix an outer minimizer $v^*_1$ of \eqref{EL4.2A} and consider the maximization problem with $r(x, \tu_2)=c(x, v^*_1(x), \tu_2)$
$$\text{maximize}\; \sE_x(r, U^2), \quad \text{over}\; \Uadm_2.$$
From Theroem~\ref{T3.1} we know that there exists a stationary Markov control $w^*_2$ which is optimal for the 
above maximization problem. Moreover, if $\lammax$ is the maximum value we have $\lammax=\sE_x(c, v^*_1, w^*_2)$
for all $x$. On the other hand from \eqref{EL4.2A} we have
\begin{align*}
& \frac{1}{2}a^{ij}(x)\partial_{ij} V(x) + b(x, v^*_1, w^*_2)\cdot \grad V(x)
+ c(x, v^*_1, w^*_2) V(x)
\\
&\quad \leq\; \max_{\tu_2\in\pAct}\Bigl(\frac{1}{2}a^{ij}(x)\partial_{ij} V(x) + b(x, v^*_1, \tu_2)\cdot \grad V(x)
+ c(x, v^*_1, \tu_2) V(x)\Bigr)
\\
&\quad =\; \min_{\tu_1\in\pAct}\max_{\tu_2\in\pAct}\Bigl(\frac{1}{2}a^{ij}(x)\partial_{ij} V(x) + b(x, \tu_1, \tu_2)\cdot \grad V(x)
+ c(x, \tu_1, \tu_2) V(x)\Bigr)
\\
&\quad=\; \Lambda \, V(x)\, .
\end{align*}
Therefore by the definition of principal eigenvalue \eqref{P-eigen} we see that $\Lambda$   is bigger than the principal eigenvalue of 
$\Lg_{(v^*_1, w^*_2)} + c(x, v^*_1, w^*_2)$. Combining with Lemma~\ref{L2.1} we have $\sE_x(c, v^*_1, w^*_2)
\;\leq \Lambda$.
Thus
\begin{equation}\label{EL3.4B}
\Lambda\geq \sup_{U^2\in\Uadm_2}\, \sE_x(v^*_1, U^2)\;\geq\; \inf_{U^1\in\Uadm_1}\sJmax(x, U^1)=\bar{\Lambda}.
\end{equation}
This proves \eqref{EL4.3A}.
Since $\underline{\Lambda}\leq \bar{\Lambda}$ in general, it follows from Lemma~\ref{L4.2} that
$\bar\Lambda=\Lambda=\underline{\Lambda}$ for all $x\in\Rd$.
\end{proof}

Now onwards we fix an outer maximizing strategy $v^*_2$ and outer minimizing strategy $v^*_1$ of \eqref{EL4.2A}.
Let us now show that the value $\Lambda$ is achieved by applying the strategy $v^*=(v^*_1, v^*_2)$.

\begin{lemma}\label{L4.4}
Suppose that either Condition~\ref{C2.1} or Condition~\ref{C2.2} holds.
Let $(V, \Lambda)$ be an eigenpair obtained in Lemma~\ref{L4.2}. Then we have 
a compact ball $\sB$ such that
for any compact ball $\sB_1\supset\sB$ we have
\begin{equation*}
V(x) \;=\; \Exp^{v^*}_x\left[\E^{\int_0^{\uuptau_1}[c(X_s, v^*_1(X_s), v^*_2(X_s))-\Lambda]\, \D{s}} V(X_{\uuptau_1})
\right], \quad \text{for}\quad x\in\sB^c_1,
\end{equation*}
where $\uuptau_1=\uptau(\sB^c_1)$.Moreover, $\Lambda=\sE_x(v^*_1, v^*_2)$.
\end{lemma}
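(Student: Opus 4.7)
The plan is to adapt the template of Lemma~\ref{L3.3} from the pure maximization problem to the present saddle-point setting. First I would pin down the linear equation satisfied by $V$ along the selected pair $v^{*}=(v^{*}_1,v^{*}_2)$. Since $v^{*}_2$ is an outer maximizing selector of \eqref{ET2.1B} and $v^{*}_1$ is an outer minimizing selector of \eqref{ET2.1C}, while the two sides of these equations agree by \cite[Theorem~3]{Fan}, evaluating the inner max at $v^{*}_1$ and the inner min at $v^{*}_2$ yields the two one-sided inequalities
\begin{align*}
\Lg_{v^{*}}V+c(x,v^{*}_1,v^{*}_2)V &\;\le\; \Lambda V,\\
\Lg_{v^{*}}V+c(x,v^{*}_1,v^{*}_2)V &\;\ge\; \Lambda V,
\end{align*}
and hence the linear identity $\Lg_{v^{*}}V+c(x,v^{*}_1,v^{*}_2)V=\Lambda V$ holds in $\Rd$.

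The main technical step, and the one I expect to be the principal obstacle, is the growth estimate $V(x)\le\kappa_2(\Lyap(x))^{\hat{\theta}}$ on the complement of some compact set $\sB$, for some $\hat{\theta}\in(0,1)$. Since $c\ge 0$ we have $\Lambda\ge 0$, and Lemma~\ref{L4.2} gives $\Lambda\le\underline{\Lambda}$; combining this with Condition~\ref{C2.1} (which forces $\Lambda<\gamma$) or with the definition of $\sC_\ell$ under Condition~\ref{C2.2}, I can find $\sB$ and $\hat{\theta}\in(0,1)$ such that $\max_{u}c(\cdot,u)-\Lambda<\hat{\theta}\gamma$ on $\sB^c$ in the first case, and $<\hat{\theta}\ell(\cdot)$ on $\sB^c$ in the second. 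Since $(\psi_n,\lambda_n)$ is the principal eigenpair on $B_n$ of the linear operator $L^n$ arising from the selectors $v^n_1,v^n_2$ in the proof of Lemma~\ref{L4.1}, the stochastic representation of \cite[Lemma~2.10(i)]{ari-anup}, together with Jensen's inequality and an application of It\^o to $\Lyap$, yields $\psi_n\le\kappa_2\Lyap^{\hat{\theta}}$ uniformly in $n$ on $\sB^c$, exactly as in \eqref{EL3.3FA}; passing to the subsequential limit gives the same bound for $V$.

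With this bound in hand, applying It\^{o}-Krylov to $\Lg_{v^{*}}V+(c-\Lambda)V=0$ with the stopping time $\uuptau_1\wedge\uptau_n\wedge T$ gives, for any ball $\sB_1\supset\sB$ and $x\in\sB_1^c\cap B_n$,
\[
V(x)=\Exp^{v^{*}}_{x}\left[\E^{\int_0^{\uuptau_1\wedge\uptau_n\wedge T}(c-\Lambda)\,\D{s}}V(X_{\uuptau_1\wedge\uptau_n\wedge T})\right].
\]
I would send $T\to\infty$ first, which is legitimate because $V$ is bounded on $\sB_1^c\cap B_n$ for fixed $n$ and because $\Exp^{v^{*}}_x[\E^{\gamma\uuptau_1}]<\infty$ under Condition~\ref{C2.1} (resp.\ $\Exp^{v^{*}}_x[\E^{\int_0^{\uuptau_1}\ell(X_s)\,\D{s}}]<\infty$ under Condition~\ref{C2.2}); then let $n\to\infty$, where the contribution on $\{\uptau_n<\uuptau_1\}$ is discarded by splitting via the level sets $\Gamma(n,m)=\{V(X_{\uptau_n})\ge m\}$ and invoking the growth bound $V\le\kappa_2\Lyap^{\hat{\theta}}$ against the Lyapunov integrability, verbatim as in the closing lines of Lemma~\ref{L3.3}. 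This delivers the claimed representation.

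Finally, the identification $\Lambda=\sE_x(v^{*}_1,v^{*}_2)$ is immediate from this representation: by \cite[Corollary~2.3]{ABS} only the principal eigenvalue of the linear operator $\Lg_{v^{*}}+c(\cdot,v^{*}_1,v^{*}_2)$ admits such a stochastic representation on $\sB_1^c$, and Lemma~\ref{L2.1} identifies that principal eigenvalue with $\sE_x(v^{*}_1,v^{*}_2)$ for every $x\in\Rd$.
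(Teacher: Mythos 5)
Your proposal is correct and mirrors the paper's own route: you spell out the derivation of the linear equation $\Lg_{v^*}V + c(\cdot,v^*_1,v^*_2)V = \Lambda V$ by combining the two one-sided inequalities from the outer-max selector of \eqref{ET2.1B} and the outer-min selector of \eqref{ET2.1C}, and then transfer verbatim the $\Lyap^{\hat\theta}$ bound and the It\^o--Krylov/truncation argument from Lemma~\ref{L3.3} before invoking \cite[Corollary~2.3]{ABS} and Lemma~\ref{L2.1}. The paper's proof compresses this to ``From \eqref{EL4.2A} we have \eqref{EL4.4A}. Then following the calculations of Lemma~\ref{L3.3}\ldots,'' so your version is simply a more explicit write-up of the same argument.
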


\begin{proof}
From \eqref{EL4.2A} we have
\begin{equation}\label{EL4.4A}
\frac{1}{2}a^{ij}(x)\partial_{ij} V + b(x, v^*_1, v^*_2)\cdot \grad V
+ c(x, v^*_1, v^*_2) V=\Lambda\, V.
\end{equation}
Then following the calculations of Lemma~\ref{L3.3}, we can find a compact set $\sB$ such that
for any compact ball $\sB_1\supset\sB$ we have
\begin{equation}\label{EL4.4B}
V(x) \;=\; \Exp^{v^*}_x\left[\E^{\int_0^{\uuptau_1}[c(X_s, v^*_1(X_s), v^*_2(X_s))-\Lambda]\, \D{s}} V(X_{\uuptau_1})
\right], \quad \text{for}\quad x\in\sB^c_1,
\end{equation}
where $\uuptau_1=\uptau(\sB^c_1)$.
But \eqref{EL4.4B} is known as the
stochastic representation of $V$, and therefore by \cite[Corollary~2.3]{ABS} $\Lambda$ is the principal eigenvalue of
the elliptic operator \eqref{EL4.4A} in $\Rd$. Hence, by Lemma~\ref{L2.1}, $\Lambda=\sE_x(v^*_1, v^*_2)$ for all $x\in\Rd$.
\end{proof}
Thus so far we have shown that the game has a value $\Lambda$, and the value is attained by the strategy $(v^*_1, v^*_2)$.
Let us now show that $(v^*_1, v^*_2)$ is in fact a saddle point strategy.

\begin{lemma}\label{L4.5}
Suppose that either Condition~\ref{C2.1} or Condition~\ref{C2.2} holds.
Let $(v^*_1, v^*_2)$ be same as we have chosen in Lemma~\ref{L4.4}. Then we have for all $x\in\Rd$,
\begin{equation}\label{EL4.5A}
\sE_x(v^*_1, U^2) \;\leq\;  \sE(v^*_1, v^*_2) \;\leq\; \sE_x(U^1, v^*_2),\quad \forall \; U^1\in\Uadm_1, \; \text{and}\; U^2\in\Uadm_2.
\end{equation}
In other words, $(v^*_1, v^*_2)$ is a saddle point strategy.
\end{lemma}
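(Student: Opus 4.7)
The plan is to reduce each half of the saddle inequality \eqref{EL4.5A} to a one-player eigenvalue characterization obtained by \emph{freezing} the opposite component of $v^* = (v^*_1, v^*_2)$, and then to invoke Theorem~\ref{T3.1} (for the maximization side) and its minimization counterpart from \cite[Theorems~4.1 and 4.2]{ABS} (for the minimization side).

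For the inequality $\sE_x(v^*_1, U^2) \leq \Lambda$, I freeze $v^*_1$ and consider the maximization problem with running cost $r(x, \tu_2) := c(x, v^*_1(x), \tu_2)$ and drift $\tilde b(x, \tu_2) := b(x, v^*_1(x), \tu_2)$. These inherit the hypotheses of Theorem~\ref{T3.1}: Borel measurability in $x$ and continuity in $\tu_2$ from the Borel measurability of $v^*_1$ together with the joint continuity of $b$ and $c$; (A2)-(A3) for $\tilde b$; and (H1) under Condition~\ref{C2.1} because $\norm{r}_\infty \leq \norm{c}_\infty < \gamma$, or (H2) under Condition~\ref{C2.2} since $r \leq \max_{u \in \Act_1\times\Act_2} c(\cdot, u)$ satisfies the same growth bound with the same $\ell, \Lyap$. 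Because $v^*_1$ is an outer minimizing selector of \eqref{ET2.1C}, the pair $(V, \Lambda)$ is a positive $\Sobl^{2,p}$ solution of
\begin{equation*}
\max_{\tu_2 \in \pAct_2}\Bigl(\tfrac{1}{2} a^{ij}\partial_{ij} V + \tilde b(x, \tu_2)\cdot \grad V + r(x, \tu_2) V\Bigr) = \Lambda\, V, \quad V(0) = 1.
\end{equation*}
By the uniqueness assertion Theorem~\ref{T3.1}(iii) and the characterization Theorem~\ref{T3.1}(i), $\Lambda$ coincides with the principal eigenvalue $\lammax$ of this frozen problem, which in turn equals $\sup_{U^2 \in \Uadm_2}\sE_x(r, U^2) = \sup_{U^2}\sE_x(v^*_1, U^2)$. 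This yields $\sE_x(v^*_1, U^2) \leq \Lambda$ for every $U^2 \in \Uadm_2$. (This argument is essentially the one already carried out in the proof of Lemma~\ref{L4.3}.)

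For the inequality $\sE_x(U^1, v^*_2) \geq \Lambda$, I run the symmetric argument with $v^*_2$ frozen, setting $r(x, \tu_1) := c(x, \tu_1, v^*_2(x))$ and $\tilde b(x, \tu_1) := b(x, \tu_1, v^*_2(x))$. Since $v^*_2$ is an outer maximizing selector of \eqref{ET2.1B}, $(V, \Lambda)$ is a positive $\Sobl^{2,p}$ solution of
\begin{equation*}
\min_{\tu_1 \in \pAct_1}\Bigl(\tfrac{1}{2} a^{ij}\partial_{ij} V + \tilde b(x, \tu_1)\cdot \grad V + r(x, \tu_1) V\Bigr) = \Lambda\, V, \quad V(0) = 1.
\end{equation*}
Applying the minimization counterpart of Theorem~\ref{T3.1} from \cite[Theorems~4.1, 4.2]{ABS} --- which holds under the same Lyapunov hypotheses since the suprema in \eqref{lyapunov}, \eqref{lyapunov1} majorize the frozen drift --- yields a unique principal eigenpair whose eigenvalue equals $\inf_{U^1 \in \Uadm_1}\sE_x(r, U^1)$. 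By uniqueness this principal eigenvalue must equal $\Lambda$, whence $\sE_x(U^1, v^*_2) \geq \Lambda$ for every $U^1 \in \Uadm_1$. Combining both inequalities with $\sE(v^*_1, v^*_2) = \Lambda$ from Lemma~\ref{L4.4} gives \eqref{EL4.5A}.

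The main technical point to pin down is the admissibility of the frozen coefficients for Theorem~\ref{T3.1} and its minimization analogue, together with the identification of $(V, \Lambda)$ as the unique principal eigenpair in each frozen problem. The first is an immediate consequence of the regularity of the data and the fact that the Lyapunov inequalities in Conditions~\ref{C2.1}, \ref{C2.2} hold with the max over both controls; the second rests on the uniqueness statement Theorem~\ref{T3.1}(iii) (and its minimization version in \cite{ABS}). Once these are in hand, no further stochastic analysis beyond what Theorem~\ref{T3.1} and \cite{ABS} already provide is required, and the saddle-point property of $(v^*_1, v^*_2)$ follows.
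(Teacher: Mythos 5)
Your proposal for the first inequality is harmless because the inequality was already established in Lemma~\ref{L4.3}, but the reasoning you give is not the one used there. Lemma~\ref{L4.3} shows $\Lambda\geq\lammax$ by noting that $V$ is a positive \emph{supersolution} of $\Lg_{(v^*_1,w^*_2)}+c-\Lambda$ and invoking the definition \eqref{P-eigen} of the principal eigenvalue together with Lemma~\ref{L2.1}; it does not use the uniqueness assertion of Theorem~\ref{T3.1}(iii). These are genuinely different arguments.

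The second inequality is where the proposal has a real gap. You freeze $v^*_2$ and correctly observe that $(V,\Lambda)$ is a positive $\Sobl^{2,p}$ solution of the one-player minimization HJB equation, and then conclude ``by uniqueness this principal eigenvalue must equal $\Lambda$.'' But the uniqueness claim in Theorem~\ref{T3.1}(iii) (and its minimization analogue in \cite{ABS}) concerns uniqueness of the \emph{eigenfunction} for the specific principal eigenvalue; it is not a statement that the principal eigenvalue is the unique $\lambda$ for which a positive eigenfunction exists. The paper's own introduction, citing \cite{Berestycki-15}, stresses that there are uncountably many eigenvalues with positive eigenfunctions. Hence $(V,\Lambda)$ being a positive solution of the frozen HJB equation does not by itself force $\Lambda$ to coincide with $\inf_{U^1}\sE_x(U^1,v^*_2)$. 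Moreover, since for arbitrary $v_1$ one only obtains $\Lg_{(v_1,v^*_2)}V+cV\geq\Lambda V$, i.e.\ $V$ is a \emph{subsolution}, the supersolution characterization \eqref{P-eigen} is of no help in bounding $\Lambda$ from above by $\lambda^*(\Lg_{(v_1,v^*_2)}+c)$. This is exactly the extra difficulty the paper addresses: it uses the stochastic representation of $V$ from Lemma~\ref{L4.4} (so $\Lambda$ is tied to the \emph{principal} eigenvalue via \cite[Corollary~2.3]{ABS}), then assumes $\Lambda>\lambda_v:=\sE(v,v^*_2)$ for contradiction, builds an eigenfunction $\widehat V$ for $(v,v^*_2)$ with its own stochastic representation, and applies the strong maximum principle \cite[Theorem~9.6]{GilTru} to the pair $(\widehat V-V)$ to reach $V=\widehat V$ and thereby contradict $\Lambda>\lambda_v$. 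Finally, the paper passes from stationary Markov controls $v$ to all admissible $U^1$ by noting that the frozen minimization problem admits an optimal stationary Markov control. None of this comparison machinery appears in your proposal, and without it the conclusion $\Lambda\leq\sE_x(U^1,v^*_2)$ does not follow.
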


\begin{proof}
The first inequality in \eqref{EL4.5A} follows from \eqref{EL3.4B}.
 So we only prove the second inequality. Let us first
show that for any stationary Markov control $v:\Rd\to\pAct_1$ we have
\begin{equation}\label{EL4.5B}
\sE(v^*_1, v^*_2)\;\leq\; \sE(v, v^*_2).
\end{equation}
The proof of \eqref{EL4.5B} is based on the method of contradiction. 
Suppose that $\Lambda=\sE(v^*_1, v^*_2)> \sE(v, v^*_2)=\lambda_v$.
Then following the same argument
as in \cite[Theorem~4.1 and ~4.2]{ABS} (see also Section~\ref{S-max}) we can find a positive eigenfunction 
$\widehat{V}\in\Sobl^{2, p}(\Rd), \, p>1,$ such that
\begin{equation}\label{EL4.5C}
\Big(\frac{1}{2}a^{ij}\partial_{ij}\widehat{V} + b(x, v, v^*_2)\cdot\grad \widehat{V}
+ c(x, v, v^*_2)\widehat{V}\Big)=\lambda_v \widehat{V},
\end{equation}
and for a suitable compact set $\sB$ we have for $\sB_1\supset\sB$ hitting time $\uuptau_1$ 
\begin{equation}\label{EL4.5D}
\widehat{V}(x)= \Exp^{\hat{v}}_x\left[\E^{\int_0^{\uuptau_1}[c(X_s, \hat{v}(X_s))-\lambda_v]\, \D{s}} \widehat{V}(X_{\uuptau_1})
\right], \quad \text{for}\quad x\in\sB^c_1, \, \hat{v}=(v, v^*_2).
\end{equation}
This can be obtained by a similar argument as in Lemma~\ref{L3.3}.
On the other hand we have from \eqref{EL4.2A} that
\begin{equation}\label{EL4.5E}
\Big(\frac{1}{2}a^{ij}\partial_{ij}V + b(x, v, v^*_2)\cdot\grad V
+ c(x, v, v^*_2)V\Big)\geq \Lambda V.
\end{equation}
Using the stochastic representation in Lemma~\ref{L4.4} we find that for some constants $\kappa_2, \hat{\theta}\in(0,1)$, 
$V(x)\leq \kappa\, (\Lyap(x))^{\hat\theta}$.
Therefore following a similar calculation as in Lemma~\ref{L3.3} and using \eqref{EL4.5E} we obtain
\begin{equation}\label{EL4.5F}
V(x) \;\leq\; \Exp_x\left[\E^{\int_0^{\uuptau_1}[c(X_s, v, v^*_2)-\Lambda]\, \D{s}} V(X_{\uuptau_1})
\right], \quad \text{for}\quad x\in\sB^c_1,
\end{equation}
for some large compact ball $\sB_1$.
Since $\Lambda>\lambda_v$ we have from \eqref{EL4.5D} and \eqref{EL4.5F} that for $x\in\sB^c_1$,
\begin{align*}
\widehat{V}(x)-V(x) &\geq\; \Exp_x\left[\E^{\int_0^{\uuptau_1}[c(X_s, v, v^*_2)-\lambda_v]\, \D{s}} \widehat{V}(X_{\uuptau_1})
\right]-
\Exp_x\left[\E^{\int_0^{\uuptau_1}[c(X_s, v, v^*_2)-\Lambda]\, \D{s}} V(X_{\uuptau_1})
\right]
\\
&\geq\; \Exp_x\left[\E^{\int_0^{\uuptau_1}[c(X_s, v, v^*_2)-\Lambda]\, \D{s}} \bigl(\widehat{V}(X_{\uuptau_1})-V(X_{\uuptau_1})\bigr)\right].
\end{align*}
Hence we can multiply $V$ by a suitable positive constant so that $\widehat{V}-V\geq 0$ and attends its minimum $0$
in $\sB_1$. Moreover, from \eqref{EL4.5C} and \eqref{EL4.5E} we also have
\begin{align*}
&\Big(\frac{1}{2}a^{ij}\partial_{ij}(\widehat{V}-V) + b(x, v, v^*_2)\cdot\grad (\widehat{V}-V)
- (c(x, v, v^*_2)-\lambda_v)^- (\widehat{V}-V)\Big)
\\
& \;\leq\; -(c(x, v, v^*_2)-\lambda_v)^+ (\widehat{V}-V)
-(\Lambda-\lambda_v) V
\\
&\;\leq\; 0.
\end{align*}
Thus by strong maximum principle  \cite[Theorem~9.6]{GilTru} we should have $V=\widehat{V}$ which would lead to $\lambda_v\geq\Lambda$ from \eqref{EL4.5C} and \eqref{EL4.5E}. This is a contradiction. Hence we have \eqref{EL4.5B}.

Now to compete the proof of \eqref{EL4.5A} we consider the following minimization problem
$$\inf_{U^1\in\Uadm_1} \sE(U^1, v^*_2).$$
But this problem has a minimizer in the class of stationary Markov controls. This can be seen mimicking the arguments of 
\cite[Theorem~4.1 and ~4.2]{ABS} for measurable cost functions (see also Section~\ref{S-max} for a similar argument).
 Then the second inequality in \eqref{EL4.5A} follows from \eqref{EL4.5B}.
\end{proof}

Now we are ready to prove the uniqueness of the value function.

\begin{lemma}\label{L4.6}
Let $\Lambda$ be the value of the game, as obtained above. Then there exists a unique $V\in\Cc^2(\Rd)$ that satisfies \eqref{EL4.2A} with $V(0)=1$.
\end{lemma}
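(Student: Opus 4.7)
The plan is to run the uniqueness template of Theorem~\ref{T3.1}(iii): construct a common stationary Markov control under which $V$ is a subsolution and any second solution $\widehat V$ is a supersolution of the \emph{same} linear eigenproblem, derive a coupled integral inequality from It\^o--Krylov, and close with the strong maximum principle. Let $\widehat V\in\Cc^2(\Rd)$ be any positive solution of \eqref{EL4.2A} with $\widehat V(0)=1$. As a preliminary step I would first upgrade $\widehat V$ to the a~priori growth bound $\widehat V(x)\le \kappa\,(\Lyap(x))^{\hat\theta}$ outside some compact set for some $\hat\theta\in(0,1)$: picking outer saddle-point selectors $(\widehat v_1,\widehat v_2)$ of \eqref{EL4.2A} for $\widehat V$ gives the linear equation $\Lg_{(\widehat v_1,\widehat v_2)}\widehat V+c\widehat V=\Lambda\widehat V$, after which re-running the Lyapunov/Harnack calculation of Lemma~\ref{L3.3} yields the bound, using $\max_{u}c(\cdot,u)-\Lambda<\hat\theta\gamma$ under Condition~\ref{C2.1} (or $\max_{u}c(\cdot,u)-\Lambda<\hat\theta\ell$ under Condition~\ref{C2.2}) outside a compact set.

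Next I would fix $v^*_2$ an outer maximizing selector of \eqref{EL4.2A} for $V$, and, via a measurable selection theorem \cite[Ch.~18]{Ali-Bor}, pick $\hat v_1$ a measurable minimizer of the map $\tu_1\mapsto \Lg^{(\tu_1,v^*_2(x))}\widehat V(x)+c(x,\tu_1,v^*_2(x))\widehat V(x)$. Under the common control $(\hat v_1,v^*_2)$ the two pointwise inequalities
\begin{equation*}
\Lg_{(\hat v_1,v^*_2)}V+(c-\Lambda)V\;\ge\;0,\qquad
\Lg_{(\hat v_1,v^*_2)}\widehat V+(c-\Lambda)\widehat V\;\le\;0
\end{equation*}
hold, because $v^*_2$ being an outer maximizer forces $\min_{\tu_1}[\Lg^{(\tu_1,v^*_2)}V+c(\cdot,\tu_1,v^*_2)V]=\Lambda V$, while $\hat v_1$ is by definition the inner $\tu_1$-minimizer for $\widehat V$, whose outer value at $v^*_2$ is bounded above by $\max_{\tu_2}\min_{\tu_1}[\cdots]=\Lambda\widehat V$.

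Applying the It\^o--Krylov formula to $t\mapsto \E^{\int_0^t(c-\Lambda)\,\D s}V(X_t)$ and to the corresponding quantity for $\widehat V$, truncating at $\uptau_n\wedge\uuptau_1\wedge T$ and passing to the limit $T\to\infty$, $n\to\infty$ by Fatou's lemma exactly as in the proofs of Lemma~\ref{L3.3} and Lemma~\ref{L4.4}, I would obtain, for a sufficiently large ball $\sB_1$ and every $x\in\sB_1^c$,
\begin{equation*}
V(x)\;\le\; \Exp^{(\hat v_1,v^*_2)}_x\!\left[\E^{\int_0^{\uuptau_1}(c-\Lambda)\D s}V(X_{\uuptau_1})\right],\quad
\widehat V(x)\;\ge\; \Exp^{(\hat v_1,v^*_2)}_x\!\left[\E^{\int_0^{\uuptau_1}(c-\Lambda)\D s}\widehat V(X_{\uuptau_1})\right].
\end{equation*}
The integrability ingredients are $\Exp^{(\hat v_1,v^*_2)}_x[\E^{\gamma\uuptau_1}]<\infty$, coming from \eqref{lyapunov}--\eqref{lyapunov1}, combined with the $\Lyap^{\hat\theta}$ growth bounds on $V$ (from Lemma~\ref{L4.4}) and on $\widehat V$ (from the preliminary step). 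For any constant $\eta>0$ these two inequalities combine to $\widehat V(x)-\eta V(x)\ge \Exp_x^{(\hat v_1,v^*_2)}[\E^{\int_0^{\uuptau_1}(c-\Lambda)\D s}(\widehat V-\eta V)(X_{\uuptau_1})]$ on $\sB_1^c$.

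Finally, I would choose $\eta=\min_{\sB_1}(\widehat V/V)>0$; then $\widehat V-\eta V\ge 0$ on $\sB_1$ with equality at some $x_0\in\sB_1$, and the integral inequality extends this to $\widehat V-\eta V\ge 0$ on all of $\Rd$ with global minimizer $x_0$. Subtracting the two pointwise PDE inequalities above yields
\begin{equation*}
\Lg_{(\hat v_1,v^*_2)}(\widehat V-\eta V)-(c-\Lambda)^-(\widehat V-\eta V)\;\le\;-(c-\Lambda)^+(\widehat V-\eta V)\;\le\;0,
\end{equation*}
an elliptic inequality whose zeroth order coefficient is non-positive. The strong maximum principle \cite[Theorem~9.6]{GilTru} then forces $\widehat V\equiv\eta V$ on $\Rd$, and the normalization $\widehat V(0)=V(0)=1$ pins $\eta=1$. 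The main technical obstacle is the preliminary step: verifying the $\Lyap^{\hat\theta}$ growth bound for an \emph{arbitrary} positive $\Cc^2$ solution $\widehat V$ of the HJI (not merely for the distinguished solution produced by the Dirichlet exhaustion of Lemma~\ref{L4.2}), since without such an a priori bound the Fatou/truncation step in the It\^o--Krylov limit cannot be closed.
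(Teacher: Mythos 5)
Your proof is correct and runs on essentially the same rails as the paper's (different solutions made sub/super-solutions of the \emph{same} linear problem under a common Markov control, then It\^o--Krylov representation plus the strong maximum principle). Your selector $\hat v_1$ (the pointwise $\tu_1$-minimizer for $\widehat V$ with $v_2^*$ frozen) differs from the paper's choice of the outer $\min$--$\max$ selector for $\widehat V$, but both deliver the same pair of pointwise inequalities, so this is immaterial.

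The one substantive comment: the ``preliminary step'' you flag as the main technical obstacle is not needed, and the worry in your final sentence is unfounded. Look at the two directions separately. Since $\Lg_{(\hat v_1,v^*_2)}\widehat V+(c-\Lambda)\widehat V\le 0$, the process $t\mapsto \E^{\int_0^t(c-\Lambda)\D s}\widehat V(X_t)$ is a nonnegative local supermartingale, and the inequality
\begin{equation*}
\widehat V(x)\;\ge\; \Exp^{(\hat v_1,v^*_2)}_x\!\left[\E^{\int_0^{\uuptau_1}(c-\Lambda)\D s}\,\widehat V(X_{\uuptau_1})\right]
\end{equation*}
follows from It\^o--Krylov on $\uuptau_1\wedge\uptau_n\wedge T$ and a plain application of Fatou's lemma as $T\to\infty$, $n\to\infty$; no a~priori growth control on $\widehat V$ enters at all. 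This is precisely how the paper obtains \eqref{EL4.6C}. The $\Lyap^{\hat\theta}$ bound is required only in the other direction --- the submartingale estimate giving $V(x)\le \Exp[\,\cdots V(X_{\uuptau_1})]$ --- and there you only need it for the \emph{constructed} solution $V$, where it is already available from Lemma~\ref{L4.4}. So you can simply delete the preliminary step, and the proof closes without it.
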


\begin{proof}
Suppose that $\tilde{V}\in\Cc^2(\Rd)$ is another solution to
\begin{align}\label{EL4.6A}
&\min_{\tu_1\in\pAct_1}\max_{\tu_2\in\pAct_2}\Big(\frac{1}{2}a^{ij}\partial_{ij}\tilde{V} + b(x, \tu_1, \tu_2)\cdot\grad \tilde{V}
+ c(x, v_1, v_2)\tilde{V}\Big)\nonumber
\\
&\quad =\max_{\tu_2\in\pAct_2}\min_{\tu_1\in\pAct_1}\Big(\frac{1}{2}a^{ij}\partial_{ij}\tilde{V} + b(x, \tu_1, \tu_2)\cdot\grad \tilde{V}
+ c(x, \tu_1, \tu_2)\tilde{V}\Big) =\Lambda \tilde{V}.
\end{align}
Let $\tilde{v}^*_1$ be an outer minimizing selector of \eqref{EL4.6A}. Recall $v^*_2$ from Lemma~\ref{L4.4}. 
Then we have from
\eqref{EL4.6A} that
\begin{align}\label{EL4.6B}
&\Big(\frac{1}{2}a^{ij}\partial_{ij}\tilde{V} + b(x, \tilde{v}^*_1, v^*_2)\cdot\grad \tilde{V}
+ c(x, \tilde{v}^*_1, v^*_2)\tilde{V}\Big)\nonumber
\\
&\quad \leq\; \max_{\tu_2\in\pAct_2}\Big(\frac{1}{2}a^{ij}\partial_{ij}\tilde{V} + b(x, \tilde{v}^*_1, \tu_2)\cdot\grad \tilde{V}
+ c(x, \tilde{v}^*_1, \tu_2)\tilde{V}\Big)=\Lambda\tilde{V}
\end{align}
Let $\tilde{v}^*=(\tilde{v}^*_1, v^*_2)$ and $c_{\tilde{v}^*}=c(x, \tilde{v}^*_1(x), v^*_2(x))$.
Applying It\^{o}'s formula to \eqref{EL4.6B} 
and then Fatou's lemma we have
for any closed ball $\sB$
\begin{equation}\label{EL4.6C}
\tilde{V}(x)\;\geq\; \Exp^{\tilde{v}^*}_x\left[\E^{\int_0^{\uuptau}[c_{\tilde{v}^*}(X_s)-\Lambda]\, \D{s}} \tilde{V}(X_{\uuptau})
\right], \quad \text{for}\quad x\in\sB^c.
\end{equation}
On the other hand from \eqref{EL4.2A} we get
\begin{align}\label{EL4.6D}
&\Big(\frac{1}{2}a^{ij}\partial_{ij}V + b(x, \tilde{v}^*_1, v^*_2)\cdot\grad V
+ c(x, \tilde{v}^*_1, v^*_2)V\Big)\nonumber
\\
&\quad \geq\; \min_{\tu_1\in\pAct_1}\Big(\frac{1}{2}a^{ij}\partial_{ij}V + b(x, \tu_1, v^*_2)\cdot\grad V
+ c(x, \tu_1, v^*_2) V\Big)\nonumber
\\
&\quad =\; \max_{\tu_2\in\pAct_2}\min_{\tu_1\in\pAct_1}\Big(\frac{1}{2}a^{ij}\partial_{ij}V + b(x, \tu_1, \tu_2)\cdot\grad V
+ c(x, \tu_1, \tu_2) V\Big)\nonumber
\\
&\quad=\; \Lambda V
\end{align}
Since $V\leq \kappa_2 (\Lyap)^{\hat\theta}$ by Lemma~\ref{L4.4},
applying It\^{o}'s formula as before (see Lemma~\ref{L3.3}) to \eqref{EL4.6D} we obtain, for a suitable closed ball $\sB$,
\begin{equation}\label{EL4.6E}
V(x) \;\leq\; \Exp^{\tilde{v}^*}_x\left[\E^{\int_0^{\uuptau}[c_{\tilde{v}^*}(X_s)-\Lambda]\, \D{s}} V(X_{\uuptau})
\right], \quad \text{for}\quad x\in\sB^c.
\end{equation}
Now use \eqref{EL4.6C} and \eqref{EL4.6E}, scale ${V}$ by multiplying a suitable positive constant,
 so that $\tilde{V}-V\geq 0$ and it attains its minimum $0$ in 
$\sB$. Again from \eqref{EL4.6B} and \eqref{EL4.6D} we have
\begin{align*}
&\Big(\frac{1}{2}a^{ij}\partial_{ij}(\tilde{V}-V) + b(x, \tilde{v}^*_1, v^*_2)\cdot\grad (\tilde{V}-V)
- (c_{\tilde{v}^*}(x)-\Lambda)^- (\tilde{V}-V)\Big)
\\
& \leq -(c_{\tilde{v}^*}(x)-\Lambda)^+ (\tilde{V}-V)
\\
&\leq 0.
\end{align*}
Applying strong maximum principle  \cite[Theorem~9.6]{GilTru} we have $V=\tilde{V}$. Hence the proof.
\end{proof}

Finally we are left with proving the verification result for saddle point strategy.

\begin{lemma}\label{L4.7}
Suppose that either Condition~\ref{C2.1} or Condition~\ref{C2.2} holds.
Consider a  stationary Markov control pair $(\hat{v}_1, \hat{v}_2)\in\Usm_1\times\Usm_2$ which is a saddle point strategy i.e.,
\begin{align*}
\sE(\hat{v}_1, \hat{v}_2) &\;\leq\; \sE(U^1, \hat{v}_2), \quad \text{for all}\; U^1\in\Uadm_1,
\\
\sE(\hat{v}_1, \hat{v}_2) &\;\geq\; \sE(\hat{v}_1, U^2), \quad \text{for all}\; U^2\in\Uadm_2.
\end{align*}
Then $\hat{v}_1$ is an outer minimizing selector of \eqref{ET2.1C}, and $\hat{v}_2$ is an outer maximizing selector of
\eqref{ET2.1B}.
\end{lemma}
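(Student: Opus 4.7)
The plan is to reduce the saddle property of $(\hat{v}_1,\hat{v}_2)$ to two one-sided optimal control problems—a maximization with $\hat{v}_1$ frozen and a minimization with $\hat{v}_2$ frozen—to which Theorem~\ref{T3.1} and its minimization counterpart apply, and then to identify the resulting eigenfunctions with the value function $V$ by a strong maximum principle comparison in the spirit of Lemma~\ref{L4.6}.

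First, chasing the saddle inequalities and invoking Lemma~\ref{L2.1} (which makes $\sE_x(\hat{v}_1,\hat{v}_2)$ independent of $x$) yields $\sJmax(x,\hat{v}_1)=\sJmin(x,\hat{v}_2)=\sE(\hat{v}_1,\hat{v}_2)$; combining with $\bar\Lambda\le\sJmax(x,\hat{v}_1)$, $\underline\Lambda\ge\sJmin(x,\hat{v}_2)$, and Lemma~\ref{L4.3} forces $\sE(\hat{v}_1,\hat{v}_2)=\Lambda$. Freezing $\hat{v}_1$, the measurable data $b(\cdot,\hat{v}_1(\cdot),\tu_2)$ and $c(\cdot,\hat{v}_1(\cdot),\tu_2)$ satisfy the hypotheses of Theorem~\ref{T3.1}; since $\sup_{U^2}\sE_x(c(\cdot,\hat{v}_1,\cdot),U^2)=\Lambda$ and $\hat{v}_2$ attains this supremum, Theorem~\ref{T3.1} produces a positive $V_{\hat{v}_1}\in\Sobl^{2,p}(\Rd)$ with
$$\max_{\tu_2\in\pAct_2}\Bigl(\Lg^{(\hat{v}_1,\tu_2)}V_{\hat{v}_1}+c(x,\hat{v}_1,\tu_2)V_{\hat{v}_1}\Bigr)=\Lambda V_{\hat{v}_1},$$
and by Theorem~\ref{T3.1}(iv), $\hat{v}_2$ is an outer maximizing selector of this equation. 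The analogous minimization statement (Theorems~4.1 and 4.2 of \cite{ABS}) yields a positive $\widehat{V}_{\hat{v}_2}$ satisfying
$$\min_{\tu_1\in\pAct_1}\Bigl(\Lg^{(\tu_1,\hat{v}_2)}\widehat{V}_{\hat{v}_2}+c(x,\tu_1,\hat{v}_2)\widehat{V}_{\hat{v}_2}\Bigr)=\Lambda \widehat{V}_{\hat{v}_2},$$
with $\hat{v}_1$ as outer minimizing selector.

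The crucial step is to identify $V_{\hat{v}_1}=V=\widehat{V}_{\hat{v}_2}$ after normalization at the origin. From the min-max form of \eqref{EL4.2A} taken at $\tu_1=\hat{v}_1$,
$$\max_{\tu_2\in\pAct_2}\Bigl(\Lg^{(\hat{v}_1,\tu_2)}V+c(x,\hat{v}_1,\tu_2)V\Bigr)\;\ge\;\Lambda V,$$
so $V$ is a sub-eigenfunction of the very nonlinear operator whose eigenpair is $(V_{\hat{v}_1},\Lambda)$. Picking a measurable outer maximizer $\bar{v}_2$ of the left-hand side for $V$ produces the linear inequalities
$$\Lg^{(\hat{v}_1,\bar{v}_2)}V+(c(x,\hat{v}_1,\bar{v}_2)-\Lambda)V\;\ge\;0,\qquad \Lg^{(\hat{v}_1,\bar{v}_2)}V_{\hat{v}_1}+(c(x,\hat{v}_1,\bar{v}_2)-\Lambda)V_{\hat{v}_1}\;\le\;0.$$
The stochastic representations of $V_{\hat{v}_1}$ (Lemma~\ref{L3.5}) and $V$ (Lemma~\ref{L4.4}), together with the growth bound $V\le\kappa_2\Lyap^{\hat\theta}$ and the Lyapunov estimates of Conditions~\ref{C2.1}--\ref{C2.2}, allow one to scale $V$ so that $W:=V_{\hat{v}_1}-V$ is non-negative on a sufficiently large ball $\sB_1$ with a zero attained inside, and to propagate $W\ge 0$ to all of $\Rd$ through the associated Feynman-Kac inequalities; the strong maximum principle \cite[Theorem~9.6]{GilTru}, exactly as in the proof of Lemma~\ref{L4.6}, then forces $W\equiv 0$, i.e.\ $V=V_{\hat{v}_1}$. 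The symmetric argument applied at $\tu_2=\hat{v}_2$ in the max-min form of \eqref{EL4.2A} yields $V=\widehat{V}_{\hat{v}_2}$.

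Finally, $V=V_{\hat{v}_1}$ gives $\max_{\tu_2}\bigl(\Lg^{(\hat{v}_1,\tu_2)}V+c(x,\hat{v}_1,\tu_2)V\bigr)=\Lambda V$, which is precisely the condition for $\hat{v}_1$ to be an outer minimizing selector of \eqref{ET2.1C}; similarly $V=\widehat{V}_{\hat{v}_2}$ gives $\min_{\tu_1}\bigl(\Lg^{(\tu_1,\hat{v}_2)}V+c(x,\tu_1,\hat{v}_2)V\bigr)=\Lambda V$, making $\hat{v}_2$ an outer maximizing selector of \eqref{ET2.1B}. The main obstacle is the eigenfunction identification $V=V_{\hat{v}_1}$: because the HJI operator is nonlinear, no off-the-shelf uniqueness result applies directly, and one must carefully choose the outer maximizer $\bar{v}_2$, exploit the Lyapunov-type growth bound on $V$ to ensure integrability of the Feynman-Kac exponential, and invoke the strong maximum principle along the lines of Lemmas~\ref{L4.5}--\ref{L4.6}.
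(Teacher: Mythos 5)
Your proof is correct and follows essentially the same strategy as the paper's: deduce $\sE(\hat{v}_1,\hat{v}_2)=\Lambda$, freeze one player to obtain a one-sided HJB eigenfunction (from Theorem~\ref{T3.1} for the maximization side, from \cite{ABS} for the minimization side), and identify that eigenfunction with $V$ via Feynman--Kac sub/supersolution inequalities and the strong maximum principle along the lines of Lemma~\ref{L4.6}. The paper explicitly carries out only the $\hat{v}_2$-frozen comparison (using $(v^*_1,\hat{v}_2)$ as the comparison control) and declares the other case analogous; you work out both sides symmetrically with the comparison controls $(\hat{v}_1,\bar{v}_2)$ and $(\bar{v}_1,\hat{v}_2)$, which is an equally valid choice, and your invocation of Theorem~\ref{T3.1}(iv) is harmless but superfluous since the outer-selector property ultimately follows directly from the identification $V=V_{\hat{v}_1}$ and $V=\widehat{V}_{\hat{v}_2}$.
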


\begin{proof}
Since 
\begin{align*}
 \Lambda=\inf_{U^1\in\Uadm_1} \sJmax(x, U^1)&\leq\; \sup_{U^2\in\Uadm_2}\sE_x(\hat{v}_1, U^2)
 \\
& \leq\; \sE_x(\hat{v}_1, \hat{v}_2)\leq\inf_{U^1\in\Uadm_1}\sE_x(U^1, \hat{v}_2)\leq
 \sup_{U^2\in\Uadm_2} \sJmin(x, U^2)=\Lambda,
\end{align*}
we have $\sE(\hat{v}_1, \hat{v}_2)=\Lambda$.
We give the proof under Condition~\ref{C2.1} and proof with the other condition is analogous.
So we assume that Condition~\ref{C2.1} holds.
We give the proof for the first situation and proof for the other situation would be 
analogous.
Therefore we 
show that $\hat{v}_2$ is an outer maximizing selector of \eqref{ET2.1B}. Fix $\hat{v}_2$ and consider the cost function
$\hat{c}(x, u_1)= c(x, u_1, \hat{v}_2(x))$. Then from \cite[Theorem~4.2]{ABS} we can find $\widehat{V}\in\Sobl^{2, p}(\Rd)$ that satisfies 
\begin{equation}\label{EL4.7A}
\min_{\tu_1\in\pAct_1}\Big(\frac{1}{2}a^{ij}\partial_{ij}\widehat{V} + b(x, \tu_1, \hat{v}_2)\cdot\grad \widehat{V}
+ \hat{c}(x, \tu_1) \widehat{V}\Big)=\Lambda\widehat{V},
\end{equation}
and for some positive constant $\kappa_2, \hat\theta\in(0,1)$, $\widehat{V}\leq\kappa_2\, (\Lyap)^{\hat\theta}$.
Now choosing the control to $v_1^*$ (which is same as in Lemma~\ref{L4.4})
we have from \eqref{EL4.7A}
\begin{equation}\label{EL4.7B}
\Big(\frac{1}{2}a^{ij}\partial_{ij}\widehat{V} + b(x, v^*_1, \hat{v}_2)\cdot\grad \widehat{V}
+ c(x, v^*_1, \hat{v}_2) \widehat{V}\Big)\geq \Lambda\widehat{V}.
\end{equation}
Applying It\^{o}'s formula to \eqref{EL4.7B} and following an argument similar to Lemma~\ref{L3.3}
we have
\begin{equation}\label{EL4.7C}
\widehat{V}(x)\leq \Exp_x\left[\E^{\int_0^{\uuptau}[c(X_s, v^*_1, u^*_2)-\Lambda]\, \D{s}} \widehat{V}(X_{\uuptau})
\right], \quad \text{for}\quad x\in\sB^c,
\end{equation}
for some closed ball $\sB$ and $\uuptau=\uptau(\sB^c)$.
On the other hand from \eqref{EL4.2A} we have
\begin{align}\label{EL4.7D}
\Lambda V&= \max_{v_2\in\pAct}\Big(\frac{1}{2}a^{ij}\partial_{ij}V + b(x, v^*_1, v_2)\cdot\grad V
+ c(x, v^*_1, v_2)V\Big)\nonumber
\\
&\geq \Big(\frac{1}{2}a^{ij}\partial_{ij}V + b(x, v^*_1, \hat{v}_2)\cdot\grad V
+ c(x, v^*_1, \hat{v}_2)V\Big),
\end{align}
and for the same closed ball $\sB$
\begin{equation}\label{EL4.7E}
V(x)\geq \Exp_x\left[\E^{\int_0^{\uuptau}[c(X_s, v^*_1, u^*_2)-\Lambda]\, \D{s}} V(X_{\uuptau})
\right], \quad \text{for}\quad x\in\sB^c.
\end{equation}
Now following the proof of Lemma~\ref{L4.6} and  using \eqref{EL4.7B}-\eqref{EL4.7E} we get $V=\widehat{V}$.
Therefore from \eqref{EL4.7A} and \eqref{ET2.1B} we see that $\hat{v}_2$ is an outer maximizing selector. Hence the proof.
\end{proof}

Finally we are ready to prove Theorem~\ref{T2.1}.

\begin{proof}[Proof of Theorem~\ref{T2.1}]
Existence of $(V, \Lambda)\in\Cc^2(\Rd)\times \RR$ which satisfies \eqref{ET2.1A}
 follows from Lemma~\ref{L4.2}
and Lemma~\ref{L4.4}. Lemma~\ref{L4.3} implies (i). (ii) follows from Lemma~\ref{L4.5} and (iii) follows from Lemma~\ref{L4.6}.
Verification part (iv) follows from Lemma~\ref{L4.7}.
\end{proof}

\subsection*{Acknowledgements.} We thank the anonymous referees for their careful reading and valuable comments.
This research of Anup Biswas was partly supported by an INSPIRE faculty fellowship and a DST-SERB grant EMR/2016/004810. Subhamay Saha acknowledges the hospitality of the Department of Mathematics in IISER-Pune while he was visiting at the early stages of this work.

\bibliographystyle{abbrv}
\bibliography{BS-risk}

\end{document}